\colorlet{symbols}{black}
\colorlet{testcolor}{green!60!black}
\def\1{\mathbf{{1}}}
\def\drawx{\draw[-,solid] (-3pt,-3pt) -- (3pt,3pt);\draw[-,solid] (-3pt,3pt) -- (3pt,-3pt);}
\tikzset{
	root/.style={circle,fill=testcolor,inner sep=0pt, minimum size=2mm},
	dot/.style={circle,fill=black,inner sep=0pt, minimum size=1mm},
	var/.style={circle,fill=black!10,draw=black,inner sep=0pt, minimum size=2mm},
	circ/.style={circle,fill=white,draw=black,inner sep=0pt, minimum size=1.2mm},
	dotred/.style={circle,fill=black!50,inner sep=0pt, minimum size=2mm},
	generic/.style={semithick,shorten >=1pt,shorten <=1pt},
	gepsilon/.style={semithick,shorten >=1pt,shorten <=1pt,densely dashed},
	dist/.style={ultra thick,draw=testcolor,shorten >=1pt,shorten <=1pt},
	testfcn/.style={ultra thick,testcolor,shorten >=1pt,shorten <=1pt,<-},
	testfcnx/.style={ultra thick,testcolor,shorten >=1pt,shorten <=1pt,<-,
		postaction={decorate,decoration={markings,mark=at position 0.6 with {\drawx}}}},
	kprime/.style={semithick,shorten >=1pt,shorten <=1pt,dotted,->},
	kprimex/.style={semithick,shorten >=1pt,shorten <=1pt,densely dashed,->,
		postaction={decorate,decoration={markings,mark=at position 0.4 with {\drawx}}}},
	kernel/.style={semithick,shorten >=1pt,shorten <=1pt,->},
	multx/.style={shorten >=1pt,shorten <=1pt,
		postaction={decorate,decoration={markings,mark=at position 0.5 with {\drawx}}}},
	kernelx/.style={semithick,shorten >=1pt,shorten <=1pt,->,
		postaction={decorate,decoration={markings,mark=at position 0.4 with {\drawx}}}},
	kepsilon/.style={semithick,shorten >=1pt,shorten <=1pt,densely dashed,->},
	kernel1/.style={->,semithick,shorten >=1pt,shorten <=1pt,postaction={decorate,decoration={markings,mark=at position 0.45 with {\draw[-] (0,-0.1) -- (0,0.1);}}}},
	kernel2/.style={->,semithick,shorten >=1pt,shorten <=1pt,postaction={decorate,decoration={markings,mark=at position 0.45 with {\draw[-] (0.05,-0.1) -- (0.05,0.1);\draw[-] (-0.05,-0.1) -- (-0.05,0.1);}}}},
	kernelBig/.style={semithick,shorten >=1pt,shorten <=1pt,decorate, decoration={zigzag,amplitude=1.5pt,segment length = 3pt,pre length=2pt,post length=2pt}},
	rho/.style={dotted,semithick,shorten >=1pt,shorten <=1pt},
	renorm/.style={shape=circle,fill=white,inner sep=1pt},
	labl/.style={shape=rectangle,fill=white,inner sep=1pt},
	xi/.style={circle,fill=symbols!10,draw=symbols,inner sep=0pt,minimum size=1.2mm},
	xix/.style={crosscircle,fill=symbols!10,draw=symbols,inner sep=0pt,minimum size=1.2mm},
	xib/.style={circle,fill=symbols!10,draw=symbols,inner sep=0pt,minimum size=1.6mm},
	xibx/.style={crosscircle,fill=symbols!10,draw=symbols,inner sep=0pt,minimum size=1.6mm},
	not/.style={circle,fill=symbols,draw=symbols,inner sep=0pt,minimum size=0.5mm},
	>=stealth,
	}
\def\DeclareSymbol#1#2#3{\expandafter\gdef\csname MH@symb@#1\endcsname{\tikz[baseline=#2,scale=0.15,draw=symbols]{#3}}\expandafter\gdef\csname MH@symb@#1s\endcsname{\scalebox{0.7}{\tikz[baseline=#2,scale=0.15,draw=symbols]{#3}}}}
\def\<#1>{\csname MH@symb@#1\endcsname}
\newcommand{\pe}{\mathbin{\scaleobj{0.7}{\tikz \draw (0,0) node[shape=circle,draw,inner sep=0pt,minimum size=8.5pt] {\footnotesize $=$};}}}
\newcommand{\pl}{\mathbin{\scaleobj{0.7}{\tikz \draw (0,0) node[shape=circle,draw,inner sep=0pt,minimum size=8.5pt] {\footnotesize $<$};}}}
\newcommand{\pg}{\mathbin{\scaleobj{0.7}{\tikz \draw (0,0) node[shape=circle,draw,inner sep=0pt,minimum size=8.5pt] {\footnotesize $>$};}}}
\let\emptyset \undefined
\theoremstyle{plain}
\newtheorem{theorem}{Theorem}[section]
\newtheorem{lemma}[theorem]{Lemma}
\newtheorem{proposition}[theorem]{Proposition}
\newtheorem{definition}[theorem]{Definition}
\theoremstyle{remark}
\newtheorem{remark}[theorem]{Remark}
\numberwithin{equation}{section}
\numberwithin{table}{section}
\newcommand{\D}{\mathbf{D}}
\newcommand{\E}{\mathbb{E}}
\newcommand{\N}{\mathbb{N}}
\renewcommand{\P}{{\mathbb P}}
\newcommand{\R}{\mathbb{R}}
\newcommand{\T}{\mathbb{T}}
\newcommand{\Z}{\mathbb{Z}}
\newcommand{\Cc}{\mathcal{C}}
\newcommand{\Ss}{\mathcal{S}}
\newcommand{\al}{\alpha}
\newcommand{\be}{\beta}
\newcommand{\de}{\delta	}
\newcommand{\eps}{\varepsilon}
\newcommand{\om}{\omega}
\newcommand{\ph}{\varphi}
\def\scal#1{\langle#1\rangle}
\DeclareMathOperator{\supp}{Supp}
\newcommand{\ls}{\lesssim}
\renewcommand{\subset}{\subseteq}
\renewcommand{\d}{\mathrm{d}}
\newcommand{\td}{\widetilde}
\newcommand{\mcl}{\mathcal}
\newcommand{\Ll}{\left}
\newcommand{\Rr}{\right}
\newcommand{\cc}{\mathbf{c}}
\newcommand{\F}{\mathscr{F}}
\newcommand{\dk}{\delta_k}
\renewcommand{\hat}{\widehat}
\newcommand{\htau}{\widehat{\tau}}
\newcommand{\hP}{\widehat{P}}
\newcommand{\Ca}{\Cc^{|\tau|}}
\begin{document}

\title{Construction of $\Phi^4_3$ diagrams for pedestrians}

\author{Jean-Christophe Mourrat \and Hendrik Weber \and Weijun Xu}

\address[Jean-Christophe Mourrat]{Ecole normale sup\'erieure de Lyon, CNRS, Lyon, France}
\email{jean-christophe.mourrat@ens-lyon.fr}

\address[Hendrik Weber]{University of Warwick, Coventry, United Kingdom}
\email{hendrik.weber@warwick.ac.uk}

\address[Weijun Xu]{University of Warwick, Coventry, United Kingdom}
\email{weijun.xu@warwick.ac.uk}

\keywords{Feynman diagrams, Singular stochastic PDEs, stochastic quantisation, paraproducts}

\subjclass[2010]{81T18, 81T08, 60H15}

\begin{abstract}
We aim to give a pedagogic and essentially self-contained presentation of the construction of various stochastic objects appearing in the dynamical $\Phi^4_3$ model. The construction presented here is based on the use of paraproducts. The emphasis is on describing the stochastic objects themselves rather than introducing a solution theory for the equation. 
\end{abstract}

\maketitle

\section{Introduction}
The purpose of this note is to give a pedagogic presentation of the construction of the various stochastic ``basis'' terms entering the construction of the dynamic $\Phi^4$ theory in three space dimensions ($\Phi^4_3$ for short). Formally, the dynamic $\Phi^4$ model on the torus $\T^3 = [0,1]^3$ is the solution $X(t,x)$ to the stochastic partial differential equation
\begin{equation}
\label{e:eqX}
\Ll\{
\begin{array}{ll}
\partial_t X = \Delta X - X^{3} + m X +  \xi, \qquad \text{on } \R_+ \times [0,1]^3, \\
X(0,\cdot) = X_0,
\end{array}
\Rr.
\end{equation}
where  $\xi$ denotes a white noise over $\R \times \T^3$, and $m$ is a real parameter.  Equation 
\eqref{e:eqX} describes the natural reversible dynamics for the ``static'' $\Phi^4_3$ Euclidean field theory, which is formally given by the expression 
\begin{equation}\label{QFT-measure}
\mu \propto \exp\Ll(- 2 \int_{\T^3} \Ll[\frac 1 2 | \nabla X|^2 + \frac{1}{4}X^4 -  \frac m 2 X^2 \Rr] \Rr) \prod_{x \in \T^3} \mathrm d X(x).
\end{equation}
The quartic potential $X^4$ in this energy gives the model its name (replacing $X$ by $\Phi$). Mathematically, neither \eqref{e:eqX} nor \eqref{QFT-measure} make sense as they stand. While this problem is the main concern of this note, we postpone its discussion and first proceed heuristically. (Alternatively, we temporarily replace the continuous space $\T^3$ by a finite approximation, with a suitable interpretation of the gradient.) 

The potential function $x \mapsto \frac 1 4 x^4 - \frac m 2 x^2$ has a single minimum at $x = 0$ for $m \le 0$. As $m$ becomes positive, a pitchfork bifurcation occurs, with the appearance of two minima at $x = \pm \sqrt{m}$, while the point $x = 0$ becomes a local maximum. In the energy between square brackets appearing in \eqref{QFT-measure}, the part consisting of
\begin{equation*}  
\int_{\T^3} \Ll[\frac{1}{4}X^4 -  \frac m 2 X^2 \Rr]
\end{equation*}
favors fields $X$ that take values close to those minima, while the part
\begin{equation*}  
\int_{\T^3} |\nabla X|^2
\end{equation*}
favors some agreement between nearby values of the field $X$. This description is highly reminiscent of that of the Ising model. Indeed, these two models are believed to have comparable phase transitions and large-scale properties. In one and two space dimensions (when $\T^{3}$ is replaced by $\T^{d}$, $d \in \{1,2\}$), a precise link between the Glauber dynamics of an Ising model with long-range interactions and the dynamic $\Phi^4$ model was obtained \cite{BPRS,FR,Ising}, and a similar result is conjectured to hold in our present three-dimensional setting.

Starting from the 60's, the $\Phi^4$ model was the subject of an intense research effort from the perspective of quantum field theory. From this point of view, the construction of the so-called Euclidean $\Phi^{4}$ measure \eqref{QFT-measure} is a first step towards building the corresponding quantum field theory. This requires the verification of certain properties known as the Osterwalder-Schrader axioms \cite{os1,os2}, among which the reflection positivity and the invariance under Euclidean isometry are the most important (we refer to \cite{reflection} for a review on reflection positivity --- in particular, the Ising measure is reflection positive, see \cite[Corollary~5.4]{reflection}). The whole endeavour was viewed as a test-bed for more complicated (and more physically relevant) quantum field theories. We stress that from this point of view, one of the directions of space becomes the time variable in the quantum field theory. The time variable appearing in \eqref{e:eqX} is then seen as an additional, physically fictitious variable, sometimes called the ``stochastic time'' in the literature. The construction of a quantum field theory based on the invariant measure of a random process is called ``stochastic quantisation'', and was proposed by Parisi and Wu \cite{ParisiWu}. We refer to \cite[Section~20.1]{GlimmJaffeBook} for references and more precise explanations.

We now return to the problem that \eqref{e:eqX} and \eqref{QFT-measure} do not actually make sense mathematically. In \eqref{e:eqX}, the roughness of the noise requires $X$ to be distribution-valued, and therefore the interpretation of the cubic power is unclear. In \eqref{QFT-measure}, one could interpret 
\begin{equation*}  
\exp\Ll(- \int_{\T^3}  | \nabla X|^2 \Rr) \prod_{x \in \T^3} \mathrm d X(x)
\end{equation*}
as a formal notation to denote the law of a Gaussian free field. Again, the Gaussian free field is distribution-valued, and there is no canonical interpretation for $X^4$ or $X^2$. 

From now on, we focus on making sense of \eqref{e:eqX}. A naive attempt would consist of regularising the noise, defining the corresponding solution, and trying to pass to the limit. However, the progressive blow-up of the non-linearity forces the limit to be identically zero (see \cite{triviality_phi4} for a rigorous justification in the case of two space dimensions). Thus, we need to take a step back and modify the original equation \eqref{e:eqX} in a way that is faithful to the intended ``physics" of the model, as sketched above. 

A formal scaling argument (see e.g.\ \cite[Section~1.1]{LectureNotesAjay}) shows that the non-linearity should become less and less relevant as we zoom in on the solution: the equation is said to be \emph{subcritical}, or super-renormalisable. The basic idea for making sense of the equation is therefore to postulate a first-order expansion of $X$ of the form $X = \<1> + Y$, where $\<1>$ is the stationary solution to the linear equation
\begin{equation}
\label{SHE}
(\partial_t  -\Delta + 1 )\<1> = \xi.
\end{equation}
In other words, letting $\{P_t = e^{t(\Delta - 1)}\}_{t \ge 0}$ denote the heat semigroup on $\T^3$, we have
\begin{equation}  \label{e.yui}
\<1>(t) = \int_{-\infty}^t P_{t-s} (\xi(s)) \, \d s.
\end{equation}
The ``$+1$'' in \eqref{SHE} serves to prevent the divergence of the low-frequency part of $\<1>$ in the long-time limit (and to allow us to talk about a stationary solution over the whole time line $\R$).

Making the ansatz $X = \<1> + Y$ and formally rewriting \eqref{e:eqX} in terms of $Y$ leads to the equation
\begin{equation}\label{Y-equation}
\partial_t Y  = \Delta Y - (Y + \<1>)^3  + m (Y+\<1>).
\end{equation}
Solving this equation for $Y$ requires  us to make sense of quantities such as $(\<1>)^2$ or $(\<1>)^3$. 

These are again ill-defined. We may regularise the noise, on scale $1/n$, and define the corresponding solution~$\<1>_n$. While $(\<1>_n)^2$ and $(\<1>_n)^3$ still diverge as we remove the regularisation, the very explicit and simple structure of $\<1>$ allows us now to identify a constant $\cc_n$  such that $(\<1>_n)^2 - \cc_n$ and $(\<1>_n)^3 - 3 \cc_n \, \<1>_n$ converge to non-trivial limits as $n$ tends to infinity, which we denote by $\<2>$ and $\<3>$ respectively. 

At this point, we can rewrite the equation \eqref{Y-equation} for $Y = X - \<1>$ using $\<1>$, $\<2>$ and~$\<3>$. In two space dimensions, this equation has been solved in \cite{dPD} with classical methods, without further recourse to the probabilistic structure of the problem. Note that this  approach shares the philosophy of rough path theory (see e.g.\ \cite{FrizHairer} for an introduction), in that one first constructs a few fundamental objects (here $\<1>$, $\<2>$ and $\<3>$) by relying on the probabilistic structure of the problem, and then one builds the solution as a deterministic and continuous map of the enriched datum $(\<1>,\<2>, \<3>)$. 

In three space dimensions, the equation one obtains for the remainder $Y$ is still ill-defined, and we need to proceed further in the postulated ``Taylor expansion'' of the solution $X$. The procedure becomes more intricate, and was solved only recently by Hairer \cite{Martin1} within his ground-breaking theory of \emph{regularity structures} (see also \cite{MartinKPZ} for the treatment of the KPZ equation with rough paths). Catellier and Chouk \cite{catcho} then showed how to recover the results of Hairer for the $\Phi^4_3$ model, using the alternative theory of \emph{para-controlled distributions} set up by Gubinelli, Imkeller and Perkowski \cite{Gubi}. We refer to \cite[Section~1]{global} for a presentation of the latter approach with notation consistent with the one we use here. Yet another approach based on Wilsonian \emph{renormalisation group} analysis was given by Kupiainen in \cite{kuppi}.

We work here in the para-controlled framework, as in 
\cite{Gubi,catcho,global}. As it turns out, six ``basis'' elements, that is, non-linear objects based on the solution to the linear equation \eqref{SHE} and built using the probabilistic structure, are required to define a solution to the $\Phi^4_3$ equation. We call these processes ``the diagrams''. They are listed in Table~\ref{t:diag}; their precise meaning will be explained shortly. The purpose of this note is to review their construction. 

Minor variants of these diagrams were built in \cite{Martin1} in the context of regularity structures. There, a very convenient graphical notation akin to Feynman diagrams was introduced to derive the bounds required for the construction of these diagrams (see \cite{Polyak} for an elementary introduction to Feynman diagrams. An earlier version of a graph-based method to bound stochastic terms using diagrams in the context of the KPZ equation was developed in   \cite{MartinKPZ}). In the context of para-controlled distributions, the exact same diagrams as those we consider here were also built in \cite{catcho}, albeit with a possibly less transparent notation. 
More recently, a remarkable machinery was developped in the context of regularity structures, which ensures the convergence of diagrams for a large class of models under extremely general assumptions; see \cite[Theorem~A.3]{HairerQuastel} and \cite{Ajay}. 

This note is mostly expository: we aim to provide a gentle introduction to this graphical notation, and to make clear that it applies equally well in the para-controlled setting. We do not strive to capture the deep results in \cite{HairerQuastel,Ajay}, but rather to give a ``pedestrian'' exposition of the calculations involved. 
{\small
\begin{table}
\centering
\renewcommand{\arraystretch}{1.5}
\begin{tabular}{ccccccc}
\toprule
$\tau$ & $\<1>$& $\<2>$& $\<30>$& $\<31p>$& $\<32p>$& $\<22p>$ 
\\
\midrule
$ \ |\tau| \ $ & $\ -\frac 1 2 - \eps \ $ & $ \ -1 - \eps \ $ & $ \ \frac 1 2 - \eps \ $ & $ \ -\eps \ $ & $ \ -\frac 1 2 - \eps \ $ & $ \ -\eps \ $
\\
\bottomrule
\end{tabular}
\bigskip
\caption{The list of relevant diagrams, together with their regularity exponent, where $\eps > 0$ is arbitrary. }
\label{t:diag}
\end{table}
}

\smallskip

We now introduce some notation. Let $\mcl S'$ denote the space of Schwartz distributions over the torus $\T^3$, and define
\begin{equation}  
\label{e.def.I}
I(f) :
\Ll\{
\begin{array}{rcl}
\R & \to & \mcl S' \\
t & \mapsto & \int_{-\infty}^t P_{t-s} (f(s)) \, \d s,
\end{array}
\Rr.
\end{equation}
for every $f \in C(\R,\mcl S')$ for which the integral makes sense. 
In other words, $I(f)$ is the ``ancient'' solution to the heat equation with right-hand side $f$, that is, the one ``started at time $-\infty$''. We measure the regularity of distributions on $\T^3$ via a scale of function spaces which we denote by $\Cc^{\al}$, $\al \in \R$. The precise definition is recalled below, and is a natural extension of the notion of $\al$-H\"older regular functions. We also recall below the definition of the resonant product $\pe$. Our goal is to identify suitable deterministic constants $\cc_n, \cc_n' \in \R$ and show the convergence as $n$ tends to infinity of the following five processes:
\begin{equation}  \label{eq:processes}
	\begin{split}
\<2>_n & := (\<1>_n)^2 - \cc_n, \\
\<30>_n & := I \Ll( (\<1>_n)^3 - 3 \cc_n \<1> \Rr), \\
\<31p>_n & := \<30>_n \pe \<1>_n, \\
\<22p>_n & := I \Ll( \<2>_n \Rr) \pe \<2>_n  - 2 \cc_n', \\
\<32p>_n & := \<30>_n \pe \<2>_n- 6 \cc_n' \<1>_n. 
    \end{split}
\end{equation}
The interested reader is referred to the discussion in \cite[Section~1.1]{global} to see how
these diagrams arise in the construction of solutions to \eqref{e:eqX}.
The stationarity in space and time of the white noise $\xi$ as well as the fact that $I$ 
defines ancient solutions to the inhomogeneous heat equation imply that these processes are stationary in space and time. Here is the main result on which we will focus.
\begin{theorem}[\cite{Martin1,catcho}] \label{th:main}
Fix 
\begin{equation}
\label{e.def.ccn}
\cc_n := \E \Ll[ (\<1>_n(t))^2 \Rr] \quad \text{ and } \quad  \cc_n' := \E \Ll[ I \Ll( \<2>_n \Rr) \pe \<2>_n (t) \Rr] .
\end{equation}
For each pair $(\tau,|\tau|)$ as in Table~\ref{t:diag}, let $\tau_n$ be defined as in \eqref{eq:processes}. There exists a stochastic process, denoted by~$\tau$ and taking values in $C(\R,\Ca)$, such that for every $p \in [1,+\infty)$, we have
\begin{equation}
\label{e.conv}
\sup_{t \in \R} \E \Ll[ \|\tau_n(t) - \tau(t)\|_{\Ca}^p \Rr] \xrightarrow[n \to +\infty]{} 0.
\end{equation}
Moreover, for every $p \in [1, +\infty)$,
\begin{equation}
\label{e.bound1}
\sup_{t \in \R} \E \Ll[ \|\tau(t)\|_{\Ca}^p \Rr]  < +\infty,
\end{equation}
and for every $p \in [1,\infty)$  and $\lambda \in [0,1]$,
\begin{equation}
\label{e.bound2}
\sup_{s < t} \frac {\E\Ll[ \|\tau(t) - \tau(s)\|_{\Cc^{|\tau| - 2\lambda}}^p \Rr] }{|t-s|^\lambda} < +\infty.
\end{equation}
\end{theorem}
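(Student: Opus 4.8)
The plan is to exploit that each $\tau_n(t)$ is a polynomial of bounded degree in the Gaussian field $\<1>_n$, so that, after the Wick-type subtractions built into \eqref{eq:processes}, it decomposes into finitely many homogeneous Wiener chaoses of bounded order. Since all $L^p$ norms are equivalent on a fixed chaos (Nelson's hypercontractivity), it will suffice to control second moments and then upgrade to arbitrary $p\in[1,\infty)$ at the end. To reach the $\Cc^{|\tau|}$-norm I would work with the Littlewood--Paley blocks $\dk$ and reduce \eqref{e.bound1} to a single block estimate of the form
\[
\sup_{x\in\T^3}\E\Ll[|\dk\tau_n(t,x)|^2\Rr]\ls 2^{-2k(|\tau|+\eps)},
\qquad k\ge-1,
\]
uniformly in $n$ and $t$, where the extra $\eps$ reflects the arbitrary room in Table~\ref{t:diag} and is what makes the ensuing sum over $k$ converge. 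Spatial stationarity makes the left-hand side independent of $x$; combined with the Bernstein inequality $\|\dk f\|_{L^\infty}\ls 2^{3k/p}\|\dk f\|_{L^p}$ and hypercontractivity applied blockwise, this yields $\sup_{t}\E[\|\tau_n(t)\|_{\Cc^{|\tau|}}^p]<\infty$ uniformly in $n$ for $p$ large, and then for all $p\ge1$ by Jensen.

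The analytic core is thus the second-moment estimate, and here the graphical calculus advertised in the introduction enters. Expanding $\E[|\dk\tau_n(t,x)|^2]$ by Wick's theorem produces a finite sum over pairings, each term being an integral of a product of covariance kernels of $\<1>_n$ against the kernels of $\dk$ and of the operator $I$. I would isolate a few elementary inputs --- that the covariance kernel $K=\lim_n K_n$ of $\<1>$ has a parabolic singularity of degree $-1$ at the origin, that $I$ improves the parabolic regularity by $2$, and that the convolution of two such singular kernels has a controlled degree --- and then bound each pairing integral by direct power-counting: assign to every edge its scaling degree, integrate out internal vertices, and verify that the resulting homogeneity reproduces the exponent $2(|\tau|+\eps)$. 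For the chaos-two diagram $\<2>_n$ and the cubic diagram $\<30>_n$ this is a convergent computation once the subtraction by $\cc_n$ has removed the lowest chaos.

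For the convergence \eqref{e.conv} I would run the same pairing expansion on $\tau_n-\tau_m$, using that replacing one factor $K_n$ by $K_n-K_m$ costs a factor $(n\wedge m)^{-\theta}$ for some $\theta>0$ while preserving the scaling in $2^{-k}$; this makes $(\dk\tau_n(t))_n$ Cauchy in $L^2(\Om;\Cc^{|\tau|})$ uniformly in $t$, and completeness produces the limit $\tau(t)$, with \eqref{e.bound1} following by passing to the limit in the uniform block bounds. For the time regularity \eqref{e.bound2} I would estimate the second moment of the block $\dk(\tau_n(t)-\tau_n(s))$ at a point and show that, relative to the bound above, it gains a positive power of $2^{2k}|t-s|$, the time increment measured in units of the parabolic scale $2^{-2k}$ of the $k$-th block; this rests on the Schauder-type increment bound $\|(P_t-P_s)f\|_{\Cc^{\al-2\la}}\ls|t-s|^\la\|f\|_{\Cc^\al}$ for the heat semigroup together with the analogous time-increment estimate for $K_n$. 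Feeding this into the Besov--hypercontractivity machine trades $2\la$ spatial derivatives against $|t-s|^\la$, which is exactly \eqref{e.bound2}, and simultaneously yields the continuity of $t\mapsto\tau(t)$ via Kolmogorov's criterion.

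The main obstacle is the three resonant-product diagrams $\<31p>$, $\<22p>$ and $\<32p>$. In each case the regularities of the two factors sum to a negative number --- for instance $|\<30>|+|\<1>|=-2\eps<0$ for $\<31p>$ --- so the resonant product $\pe$ is \emph{not} classically well defined and no deterministic estimate can produce the asserted regularity; the entire gain must come from probabilistic cancellation inside the chaos decomposition. The delicate point is to locate, within the Wick expansion of $I(\<2>_n)\pe\<2>_n$ and $\<30>_n\pe\<2>_n$, the resonant sub-diagram whose naive power-counting diverges, to check that the divergence is a deterministic constant in the lowest chaos matched exactly by the subtraction $2\cc_n'$ (respectively $6\cc_n'\<1>_n$), and then to show that the renormalised kernel which remains genuinely has a better homogeneity than its edges suggest. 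Carrying out this cancellation by hand, rather than invoking a black-box renormalisation theorem, is the heart of the ``pedestrian'' computation and the step I expect to require the most care.
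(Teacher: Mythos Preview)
Your proposal is correct and follows essentially the same architecture as the paper: reduce to second-moment block estimates via stationarity and Bernstein, upgrade to $L^p$ by Nelson, decompose each symbol into homogeneous chaoses, and treat the resonant-product diagrams as the crux where the $\cc_n'$-subtractions cancel the divergent lowest-chaos contribution. The only notable difference is presentational: you frame the core second-moment computation in real space via parabolic power-counting on covariance kernels (closer to \cite{Martin1}), whereas the paper works entirely in spatial Fourier variables, reducing everything to decay bounds on $\E[|\hat\tau(t,\om)|^2]$ and two elementary discrete-convolution lemmas --- a standard one for $\sum_{\om_1+\om_2=\om}\scal{\om_1}^{-\al}\scal{\om_2}^{-\be}$, and a variant where the resonant constraint $\om_1\sim\om_2$ lifts the restriction $\al,\be<d$, which is exactly the mechanism giving the improved regularity for $\<31p>$, $\<22p>$, $\<32p>$. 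The paper explicitly remarks that the two viewpoints are equivalent; the Fourier route makes the role of the resonant cutoff slightly more transparent, while your real-space power-counting is perhaps more portable to settings without a convenient Fourier basis.
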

\begin{remark}  
Since the processes we consider are stationary in time, the constants in \eqref{e.def.ccn} do not depend on the time $t$. Moreover, the suprema in \eqref{e.conv} and \eqref{e.bound1} are superfluous. We prefer to write them nonetheless, since the statements with the suprema are robust to perturbations of the stationarity property.
\end{remark}
\begin{remark}
As will be seen below, the constants $\cc_n$ and $\cc_{n}'$ diverge at order $n$ and $\log n$ respectively. 
\end{remark}

\begin{remark}
The bound \eqref{e.bound2} immediately implies the pathwise H\"older 
continuity of the symbols, by the Kolmogorov continuity test. 
In the construction of solutions to \eqref{e:eqX}, this strong control 
on the H\"older regularity of $\tau$ is only needed for the symbol $\<30>$. For the 
remaining symbols, a weaker bound of the type
\begin{align*}
\E  \Big[ \sup_{t \in [0,T]} \|\tau(t)\|_{\Ca}^p \Big]  
\end{align*}
suffices. However, the proofs of \eqref{e.bound2} and \eqref{e.bound1} are relatively similar anyway, as we hope to convince the reader below.
\end{remark}


\smallskip

This note is organised as follows. In Section \ref{sec:pl}, we introduce  Besov spaces. The diagrams take values in these spaces. The definition of these function spaces is based on the Littlewood-Paley decomposition. This allows us to define paraproducts along the way, and to give relevant intuition for them. In Section~\ref{sec:Gaussian}, we introduce the white noise process and discuss the property of equivalence of moments. The latter is very convenient to reduce the bounds \eqref{e.bound1} and \eqref{e.bound2} to easy-to-check second moment computations. In Section~\ref{sec:bounds}, we construct the diagrams and prove the fixed time bound \eqref{e.bound1}. In Section~\ref{sec:time}, we briefly discuss how the bound \eqref{e.bound2} for time differences follows easily from the fixed time one. Finally, in the appendix, we give an alternative proof of the equivalence of moments property exposed in Section~\ref{sec:Gaussian}.

\section{Function spaces and paraproducts} \label{sec:pl}

In this section, we introduce the function spaces we will use, denoted by $\Cc^{\al}$, for $\al \in \R$. When $\al \in (0,1)$, they are (a separable version of) the usual H\"{o}lder spaces. For general $\al$, they belong to the larger class of Besov spaces, and enjoy remarkable stability properties under multiplication. We choose to also give an informal presentation of these properties, although we will not refer to these in our actual construction of the diagrams. Our choice is motivated by the fact that the question of defining products of distributions is central to making sense of the $\Phi^4$ model. It is therefore useful to survey first what can be achieved with purely deterministic methods (and what can be ultimately used to show well-posedness of the $\Phi^4$ model). Moreover, exploring this question naturally leads to the introduction of paraproducts. In this section, the space dimension $d$ is arbitrary. For most results, we only provide a sketch of proof. A much more detailed treatment of the topics discussed in this section can be found in \cite[Chapter~2]{BCD}.

We wish to extend the notion of $\al$-H\"older regularity of a distribution $f$ on $\T^d$ to exponents $\al < 0$. Roughly speaking, this should mean that for every $x \in \T^d$ and every test function $\ph \in C^\infty_c(\R^d)$, 
\begin{equation}  
\label{e.intuit.Holder}
\Ll\langle f, \eps^{-d} \ph(\eps^{-1} (\cdot \, - x))  \Rr\rangle \ls \eps^\al \quad \text{  uniformly in $x \in \T^d$ and $\eps \to 0$}, 
\end{equation}
where we interpret $f$ as a periodic distribution on $\R^d$ in the duality pairing above\footnote{Here and below we write $A \ls B$ to mean that there exists a constant $C$, which is independent of the quantities of interest, such that $A \leq CB$.}.
 A precise definition can be built along these lines (the interested reader can find it in \cite[Definition~3.7]{Martin1}). However, we prefer to adopt here a point of view based on Fourier analysis, which allows for a more direct understanding of the stability of the spaces under multiplication.

\begin{remark}
For \emph{positive} $\alpha$, the condition \eqref{e.intuit.Holder} should be replaced by
\begin{equation}  
\label{e.intuit.Holder-bis}
\Ll\langle f - p_x(\cdot) , \eps^{-d} \ph(\eps^{-1} (\cdot \, - x))  \Rr\rangle \ls \eps^\al \quad \text{  uniformly in $x \in \T^d$ and $\eps \to 0$}, 
\end{equation}
where $p_x$ is the Taylor approximation of order $\lfloor \alpha\rfloor$ of $f$. For negative $\alpha$,
there is no such polynomial, and this recentering is unnecessary.
\end{remark}

For every $f \in L^1(\T^d)$ and $\omega \in \Z^d$, we write
\begin{equation}  
\label{e.def.hatf}
\F f(\omega) = \hat f(\omega) := \int_{\T^d} f(x) e^{-2i \pi \omega \cdot x} \, \d x,
\end{equation}
for the Fourier coefficient of $f$ with frequency $\omega$, so that
\begin{equation*}  
f(x) = (\F^{-1}\hat f)(x) := \sum_{\omega \in \Z^d} \hat f(\omega) e^{2i\pi \omega \cdot x},
\end{equation*}
where $\F^{-1}$ denotes the inverse Fourier transform.

The definition of Besov spaces rests on a decomposition of the Fourier series of a function along dyadic annuli, an idea due to Littlewood and Paley. More precisely, we think of splitting $\hat f$ into
\begin{equation}  \label{e.Fourier.decomp}
\hat f \, \1_{B(0,1)} + \sum_{k = 0}^{+\infty} \hat f \,  \1_{B(0,2^{k+1}) \setminus B(0,2^k)},
\end{equation}
where $B(0,r) := \{\omega \in \Z^d \ : \ |\omega| < r\}$.  
In \eqref{e.Fourier.decomp}, the terms of the series associated with large $k$'s measure the fast oscillations of the function; the general Besov norm can be thought of as a weighted average of the $L^p$ norm of these summands. This type of decomposition enjoys better analytical properties if we replace the indicator functions by smoothened versions thereof. More precisely, we can find functions $\td{\chi}, \chi \in C^\infty_c(\R^d)$ both taking values in $[0,1]$, with supports
\begin{align*}
\text{Supp} \tilde{\chi} \subset B\Ll(0,\frac{4}{3}\Rr), \qquad \text{Supp} \chi \subset B\Ll(0,\frac{8}{3}\Rr) \setminus B\Ll(0,\frac{3}{4}\Rr), 
\end{align*}
and such that
\begin{equation}
\label{chi-prop3}
\td{\chi}(\zeta) + \sum_{k = 0}^{+\infty} \chi(\zeta/2^k) = 1, \quad \forall \zeta\in \R^d. 
\end{equation}
We may furthermore choose these functions to be radially symmetric. We write
\begin{equation}
\label{e:def:chik}
\chi_{-1} := \td{\chi}, \qquad \chi_k(\cdot) := \chi(\cdot/2^k) \quad k \ge 0.
\end{equation}
The supports of $\tilde{\chi}$ and $\chi$ ensure that $\chi_k$ and $\chi_{k'}$ overlap only if $|k-k'| \le 1$ (see \cite[Proposition~2.10]{BCD} for more details). For every $f \in C^\infty(\T)$ and $k \ge -1$, we let
$$
\delta_k f := \F^{-1} \Ll(\chi_k \, \hat{f}\Rr), 
$$
so that $\hat f = \sum_{k \ge -1} \chi_k \hat f$ (compare with \eqref{e.Fourier.decomp}) and $f = \sum_{k \ge -1} \delta_k f$. 
Let
\begin{equation}
\label{e:def:etak}
\eta_k = \F^{-1}(\chi_k), \qquad \eta = \eta_0.
\end{equation}
For $k \ge 0$, we have 
\begin{equation}
\label{e.approx.scale}
\eta_k \simeq 2^{dk} \eta(2^k\, \cdot\, ),
\end{equation}
up to a small error due to the fact that our phase space $\Z^d$ is discrete (since our state space $\T^d$ is compact)\footnote{One may estimate the error in \eqref{e.approx.scale} and show that it is negligible for our purpose, but the simplest way around this technical point is probably to interpret each periodic function on $\T^d$ as a periodic function on $\R^d$, and then use $L^p(\R^d)$ norms and the continuous Fourier transform throughout, so that \eqref{e.approx.scale} becomes exact. The continuous Fourier transform of any Schwartz distribution is well-defined, by duality. For a periodic $f \in L^1_\mathrm{loc}(\R^d)$, the Fourier transform is a sum of Dirac masses at every $\omega \in \Z^d$, each carrying a mass $\hat f(\omega)$ as defined in~\eqref{e.def.hatf}.}. For every $k \ge -1$, we have
\begin{equation}
\label{e.dk-convol}
\dk f = \eta_k \star f,
\end{equation}
where $\star$ denotes the convolution on the torus $\T^d$. 
In agreement with \eqref{e.intuit.Holder}, \eqref{e.approx.scale} and \eqref{e.dk-convol}, we define the $\Cc^\al$ norm by
\begin{equation}
\label{e.def.B-norm}
\| f \|_{\Cc^\al}:= \sup_{k \ge -1}2^{\al k } \| \delta_k f \|_{L^\infty}.
\end{equation}
One can check that this quantity is finite if $f \in C^\infty(\T^d)$. The space $\Cc^\al$ is the completion of $C^\infty(\T^d)$ with respect to this norm. This space can be realised as a subspace of the space of Schwartz distributions.

\begin{remark}  
\label{r.completion}
We depart slightly from the convention to define $\Cc^\al$ as the space of distributions with finite $\|\cdot\|_{\Cc^\al}$ norm. Our definition makes the space separable and allows us below to define products of distributions and functions via approximation. Moreover, one can check that if a distribution $f$ satisfies $\|f\|_{\Cc^\al} < \infty$, then $f \in \Cc^{\be}$ for every $\be < \al$.
\end{remark}

The most important property of Besov spaces for our purpose is the following multiplicative structure. 
\begin{proposition}
\label{p.mult}
Let $\al < 0 < \be$ be such that $\al + \be > 0$. The multiplication $(f,g) \mapsto fg$ extends to a continuous bilinear map from $\mcl C^\al \times \mcl C^\be$ to $\mcl C^\al$. 
\end{proposition}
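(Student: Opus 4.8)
The plan is to establish, for all $f,g \in C^\infty(\T^d)$, the single bilinear bound
\begin{equation*}
\|fg\|_{\Cc^\al} \ls \|f\|_{\Cc^\al}\,\|g\|_{\Cc^\be},
\end{equation*}
and then extend the product to $\Cc^\al \times \Cc^\be$ by density and bilinearity. This last step is legitimate because, by definition, $\Cc^\al$ and $\Cc^\be$ are the completions of $C^\infty(\T^d)$ under the respective norms, and on smooth functions $(f,g)\mapsto fg$ is the ordinary pointwise product; a bounded bilinear map on a dense subspace extends uniquely and continuously to the completion. To prove the bound I would use the Littlewood--Paley blocks $\delta_k$ together with \emph{Bony's decomposition}. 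Writing $S_{k-1}h := \sum_{-1 \le j \le k-2}\delta_j h$ for the low-frequency cut-off, every pair $(j,k)$ falls into exactly one of the regimes $j \le k-2$, $|j-k|\le 1$, $j \ge k+2$, so that
\begin{equation*}
fg = \sum_{k}S_{k-1}f\,\delta_k g + \sum_{|j-k|\le 1}\delta_j f\,\delta_k g + \sum_{k}\delta_k f\,S_{k-1}g =: (f\prec g) + (f\pe g) + (f\succ g).
\end{equation*}
It then suffices to bound each of the three pieces in $\Cc^\al$.

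The two paraproducts are controlled by a spectral-support observation: since the Fourier transform of a product is the convolution of Fourier transforms, the supports add. Each summand $S_{k-1}f\,\delta_k g$ of $f\prec g$ therefore has spectrum in a dyadic \emph{annulus} of scale $2^k$ (the spectrum of $S_{k-1}f$ lies in a ball whose radius is strictly below the inner radius of the annulus carrying $\delta_k g$, so the sum stays bounded away from the origin). Consequently $\delta_q(f\prec g)$ receives contributions only from $k$ with $|k-q|\le N_0$ for a fixed $N_0$. Using $\|\delta_k g\|_{L^\infty}\le 2^{-\be k}\|g\|_{\Cc^\be}$ and, because $\al<0$, $\|S_{k-1}f\|_{L^\infty}\ls \sum_{j\le k-2}2^{-\al j}\|f\|_{\Cc^\al}\ls 2^{-\al k}\|f\|_{\Cc^\al}$, one finds $2^{\al q}\|\delta_q(f\prec g)\|_{L^\infty}\ls 2^{-\be q}\|f\|_{\Cc^\al}\|g\|_{\Cc^\be}$, which is bounded since $\be>0$; hence $f\prec g\in \Cc^{\al+\be}\subset \Cc^\al$. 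For $f\succ g$ the same annulus localisation applies, but now one only uses that $\be>0$ makes $\sum_{j\le k-2}2^{-\be j}$ uniformly bounded, i.e.\ $\|S_{k-1}g\|_{L^\infty}\ls \|g\|_{\Cc^\be}$ (equivalently $\Cc^\be\hookrightarrow L^\infty$); this gives $2^{\al q}\|\delta_q(f\succ g)\|_{L^\infty}\ls \|f\|_{\Cc^\al}\|g\|_{\Cc^\be}$, so $f\succ g\in\Cc^\al$. This is the term that only lands in $\Cc^\al$ rather than something smoother, which explains the target regularity.

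The resonant term is the crux, and the only place where the hypothesis $\al+\be>0$ is genuinely needed. Here each summand $\delta_j f\,\delta_k g$ with $|j-k|\le 1$ has spectrum in a \emph{ball} of radius $\ls 2^k$ (balls add to balls, with no cancellation of low frequencies), so $\delta_q(f\pe g)$ now receives contributions from \emph{all} $k\ge q-N_1$. Summing,
\begin{equation*}
\|\delta_q(f\pe g)\|_{L^\infty}\ls \sum_{k\ge q-N_1}2^{-\al k}2^{-\be k}\,\|f\|_{\Cc^\al}\|g\|_{\Cc^\be}\ls 2^{-(\al+\be)q}\,\|f\|_{\Cc^\al}\|g\|_{\Cc^\be},
\end{equation*}
where the geometric series converges precisely because $\al+\be>0$; had this failed, the high-frequency contributions would not sum. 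Thus $f\pe g\in\Cc^{\al+\be}\subset\Cc^\al$ as well, using Remark~\ref{r.completion}. Combining the three bounds yields the desired bilinear estimate, and the density argument completes the proof. I expect the resonant estimate, and more precisely the spectral-support bookkeeping that pins down which frequencies $q$ each block feeds into, to be the main technical point; everything else reduces to a geometric-series computation.
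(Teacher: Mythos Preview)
Your proof is correct and follows essentially the same approach as the paper: Bony's decomposition into the two paraproducts and the resonant term, with the same spectral-support observations (annulus for the paraproducts, ball for the resonant term) and the same identification of where each hypothesis $\al<0$, $\be>0$, $\al+\be>0$ enters. Your notation $\prec,\succ$ corresponds to the paper's $\pl,\pg$, and you make the density extension step explicit where the paper simply refers to \cite{BCD}.
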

The proof of this proposition rests on the decomposition
\begin{equation*}  
fg = \sum_{k < l-1} \de_k f \, \de_l g + \sum_{|k-l| \le 1} \de_k f \, \de_l g + \sum_{k > l+1} \de_k f \, \de_l g,
\end{equation*}
which we will write suggestively in the form
\begin{equation*}  
fg = f \pl g + f \pe g + f \pg g.
\end{equation*}
This is often called \emph{Bony's decomposition} into the \emph{paraproducts} $f \pl g$, $f \pg g = g \pl f$, and the \emph{resonant product} $f \pe g$. 

In order to prove Proposition~\ref{p.mult}, it suffices to show that each of these terms extends to a continuous bilinear map from $\mcl C^\al \times \mcl C^\be$ to $\mcl C^\al$. However, it is very important for our more general goal of making sense of the $\Phi^4$ model to be precise about the behaviour of each term separately. We thus simply assume that $\al < 0 < \be$  and $f \in \Cc^\al$, $g \in \Cc^\be$ to begin with (but do not yet prescribe the sign of $\al + \be$), and see whether and how we can estimate each of the terms in Bony's decomposition. We start with 
\begin{equation}  \label{e.pl}
f \pl g = \sum_{k < l-1} \de_k f \, \de_l g.
\end{equation}
In view of \eqref{e.def.B-norm}, in order to see which H\"{o}lder class $f \pl g$ belongs to, we need to estimate
\begin{equation} \label{eq:expression_pl}
\Ll\| \delta_{j} (f \pl g) \Rr\|_{L^{\infty}} = \Ll\| \sum_{l=-1}^{+\infty} \delta_{j} \big( \sum_{k=-1}^{l-2} \delta_{k} f \delta_{l} g \big)\Rr\|_{L^{\infty}}. 
\end{equation}
Recall that the Fourier transform of $\de_k f$ (resp. $\de_l g$) is supported on an annulus of both inner and outer radius of size about $2^{k}$ (resp. $2^{l}$). 
The important observation is that as we sum over $k \leq l-2$
, 
the Fourier transform of $\delta_{k} f \delta_{l} g$ is still supported in an annulus of inner and outer radius both proportional to  $2^{l}$. Hence, only a finite number 
of terms $l  \simeq j$ contribute to the outer sum on the right hand side above. Now, since
\begin{equation*}  
\|\de_k f\|_{L^\infty} \le 2^{-\al k} \|f\|_{\Cc^\al}, \qquad \|\de_l g\|_{L^\infty} \le 2^{-\be l} \|g\|_{\Cc^\al}, 
\end{equation*}
the right hand side of \eqref{eq:expression_pl} can then be bounded by
\begin{equation*}  
\| \sum_{k=-1}^{j-2} \delta_{k} f \delta_{j} g\|_{L^{\infty}} \leq \sum_{k = -1}^{j-2} 2^{-\al k - \be j} \|f\|_{\Cc^\al} \, \|g\|_{\Cc^\be} < C 2^{-(\al + \be) j} \|f\|_{\Cc^\al} \, \|g\|_{\Cc^\be},
\end{equation*}
where we used that $\al < 0$, and $C$ does not depend on $f$ or $g$. This shows that
\begin{equation*}  
\|f \pl g\|_{\Cc^{\al + \be}} \le C \|f\|_{\Cc^\al} \, \|g\|_{\Cc^\be}.
\end{equation*}
The same analysis applies for the term $g \pl f$, except that since we have $\be > 0$, we get
\begin{equation*}  
\sum_{l = -1}^{j-2} 2^{-\al j - \be l} \|f\|_{\Cc^\al} \, \|g\|_{\Cc^\be} \le C 2^{-\al j} \|f\|_{\Cc^\al} \, \|g\|_{\Cc^\be} ,
\end{equation*}
which implies that
\begin{equation*}  
\|g \pl f\|_{\Cc^{\al}} \le C \|f\|_{\Cc^\al} \, \|g\|_{\Cc^\be}.
\end{equation*}
Note that we have made no assumption on the sign of $\alpha + \beta$ so far. The term $f \pl g$ inherits essential features of the small scale behaviour of  $g$
``modulated'' by the low frequency modes of $f$.
This is in agreement with the fact that $f \pl g$ is more regular than $g \pl f$ under our hypothesis $\al < \be$.

We now turn to the resonant term $f \pe g$, which for simplicity we think of as being
\begin{equation*}  
\sum_{k = -1}^{+\infty} \de_k f \, \de_k g. 
\end{equation*}
The crucial difference in the analysis of this term, compared with the previous computations, is that the support of the Fourier transform of the summand indexed by $k$, which is the convolution of the annulus of radius about $2^k$ by itself, results in a \emph{ball} of radius $2^k$, as opposed to an \emph{annulus}. Therefore, every summand indexed by $k \ge l$ contributes the the $l$-th Littlewood-Paley block 
$\delta_l (f \pe g)$. The $L^\infty$ norm of each summand is bounded by 
\begin{equation*}  
2^{-(\al + \be) k} \|f\|_{\Cc^\al}  \|g\|_{\Cc^\be}.
\end{equation*}
If we want this to be summable over $k \geq l$, we need to assume $\al + \be > 0$. In this case, the sum is of order $2^{-(\al + \be)l}\|f\|_{\Cc^\al} \, \|g\|_{\Cc^\be}$, which suggests that
\begin{equation*}  
\|f \pe g \|_{\Cc^{\al + \be}} \le C \|f\|_{\Cc^\al} \, \|g\|_{\Cc^\be}.
\end{equation*}
These computations can all be made rigorous (see e.g.\ \cite{BCD}), and are summarised in Table~\ref{t.para}. Note that these relations are relevant in the construction of $\Phi^4$ since there, one needs to characterise the products between $f \in \Cc^{\al}$ and $g \in \Cc^{\beta}$ (though sometimes necessarily $\al +\be < 0$, and renormalisations are then needed for the resonant term).  

{\small
\begin{table}
\centering
\renewcommand{\arraystretch}{1.5}
\begin{tabular}{ccccc}
\toprule
 & $f \pl g$ & $g \pl f$ & $f \pe g$ & $fg$
\\
\midrule
 Regularity & $\al + \be$ & $\al$ & $\al + \be$ & $\al$
\\
Needs $\al + \be > 0$ & No & No & Yes & Yes 
\\
\bottomrule
\end{tabular}
\bigskip
\caption{{\small Summary of the regularity properties of paraproducts for $\al < 0 < \be$, $f \in \Cc^\al$ and $g \in \Cc^\be$.}}
\label{t.para}
\end{table}
}

We point out that the regularising effect of the heat kernel can be conveniently measured using the spaces $\mcl C^\al$. While we will not use this proposition in itself here, it is a useful guide to the intuition. In particular, the time singularity in \eqref{e.heat.flow} is integrable as long as the difference of regularity exponents is less than~$2$. In other words, the integration operator $I$ in \eqref{e.def.I} brings a gain of~$2$ units of regularity.

\begin{proposition}
\label{p.heat.flow}
If $\al \le \be \in \R$, then there exists $C < \infty$ such that for every $t > 0$, we have
\begin{equation}  
\label{e.heat.flow}
\|e^{t\Delta} f\|_{\Cc^\be} \le  C \, t^{\frac{\al-\be}{2}} \, \|f\|_{\Cc^\al}.
\end{equation}
\end{proposition}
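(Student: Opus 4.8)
The plan is to estimate the operator block by block in the Littlewood--Paley decomposition. Since both $e^{t\Delta}$ and $\dk$ are Fourier multipliers (with respective symbols $e^{-4\pi^2 t |\om|^2}$ and $\chi_k$), they commute, so $\dk e^{t\Delta} f = e^{t\Delta} \dk f$. By the definition \eqref{e.def.B-norm} of the $\Cc^\be$ norm, it therefore suffices to find $c>0$ and $C<\infty$, independent of $t,f,k$, such that
\begin{equation}
\label{e.block.heat}
\|e^{t\Delta}\dk f\|_{L^\infty} \le C\, e^{-c t 2^{2k}}\, \|\dk f\|_{L^\infty} \qquad (k\ge 0),
\end{equation}
together with the analogous bound $\|e^{t\Delta}\dk f\|_{L^\infty}\le C\|\dk f\|_{L^\infty}$ for the low-frequency block $k=-1$. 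Indeed, inserting $\|\dk f\|_{L^\infty}\le 2^{-\al k}\|f\|_{\Cc^\al}$ and multiplying by $2^{\be k}$ reduces the whole statement to an elementary optimisation over $k$, carried out below.

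To prove \eqref{e.block.heat} I would write $e^{t\Delta}\dk f = K_k^t \star \dk f$, where $K_k^t := \F^{-1}\big(e^{-4\pi^2 t |\cdot|^2}\chi_k\big)$, and invoke Young's inequality to get $\|e^{t\Delta}\dk f\|_{L^\infty}\le \|K_k^t\|_{L^1}\|\dk f\|_{L^\infty}$. (Here it is cleanest to adopt the viewpoint of the footnote following \eqref{e.approx.scale} and work with the continuous Fourier transform on $\R^d$, so that the rescaling below is exact.) The key estimate is then $\|K_k^t\|_{L^1}\le C e^{-c t 2^{2k}}$ for $k \ge 0$. This follows from the change of variables $\om = 2^k \zeta$, which gives $K_k^t(x) = 2^{dk} g_{s}(2^k x)$ with $s := t 2^{2k}$ and $g_s := \F^{-1}\big(e^{-4\pi^2 s|\cdot|^2}\chi\big)$, whence $\|K_k^t\|_{L^1}=\|g_s\|_{L^1}$. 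Since $\chi$ is supported in the annulus $\{|\zeta|\ge 3/4\}$, one has the pointwise bound $e^{-4\pi^2 s |\zeta|^2}\le e^{-c s}$ there; combining this with repeated integration by parts in $\zeta$ (non-stationary phase, to produce enough spatial decay of $g_s$ to keep it integrable) yields $\|g_s\|_{L^1}\le C e^{-c s}$, which is \eqref{e.block.heat}. This kernel bound is the main technical obstacle: one must extract genuine exponential decay uniformly in $k$ while controlling the $L^1$ norm (the full spatial integral), and attend to the discrete-versus-continuous Fourier subtlety.

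It then remains to combine. For $k\ge 0$ we must bound $\sup_{k\ge 0} 2^{(\be-\al)k}e^{-ct2^{2k}}$. Setting $u=2^k\ge 1$ and $\ga := \be-\al\ge 0$, the function $u\mapsto u^{\ga}e^{-ctu^2}$ is maximised at $u^2 = \ga/(2ct)$ with value a constant multiple of $t^{-\ga/2}$ (and is bounded by $1$ when $\ga=0$); since $e^{-ct}$ decays faster than any power for $t$ large, this gives $\sup_{k\ge 0} 2^{(\be-\al)k}e^{-ct2^{2k}}\le C\, t^{(\al-\be)/2}$, as desired. The low-frequency block $k=-1$ contributes only a constant multiple of $\|f\|_{\Cc^\al}$; for $t$ in any bounded interval this is absorbed into $C t^{(\al-\be)/2}$ because the exponent $\al-\be$ is nonpositive, and the statement uniform in all $t>0$ is recovered once one works with the semigroup $P_t = e^{t(\Delta-1)}$ appearing in \eqref{e.def.I}, whose extra factor $e^{-t}$ supplies the decay of the zero Fourier mode for large $t$. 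Taking the supremum over all $k\ge -1$ then yields \eqref{e.heat.flow}.
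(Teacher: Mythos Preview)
Your approach is essentially the same as the paper's: both use that $\dk$ and $e^{t\Delta}$ commute as Fourier multipliers, both aim for the block estimate $\|e^{t\Delta}\dk f\|_{L^\infty}\lesssim e^{-ct2^{2k}}\|\dk f\|_{L^\infty}$, and both conclude by the elementary bound $\sup_{k}(2^{2k}t)^{(\be-\al)/2}e^{-ct2^{2k}}<\infty$. The paper only sketches the block estimate heuristically (``$e^{t\Delta}\dk f\simeq e^{-t2^{2k}}\dk f$'') and points to the Bernstein-type arguments of Lemma~\ref{l.Bernstein+} for a rigorous justification; your route via $\|K_k^t\|_{L^1}$, the exact rescaling $K_k^t(x)=2^{dk}g_s(2^k x)$ with $s=t2^{2k}$, and integration by parts on the annulus is precisely how one makes that sketch rigorous. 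One small point to tighten: when you integrate by parts in $\zeta$, the derivatives of $e^{-4\pi^2 s|\zeta|^2}$ bring down polynomial factors in $s$, but these are harmlessly absorbed into $e^{-cs}$ at the cost of shrinking $c$. You are also right to flag the low-frequency block: on the torus the zero mode is fixed by $e^{t\Delta}$, so the stated bound with $e^{t\Delta}$ cannot hold uniformly for large $t$ when $\al<\be$; the paper's sketch glosses over this, and your observation that the damped semigroup $P_t=e^{t(\Delta-1)}$ (which is what the rest of the paper actually uses) restores the uniform estimate is the correct resolution.
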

\begin{proof}[Sketch of proof]
The Laplacian $\Delta$ is a multiplication operator in Fourier space. As a consequence, we have $\de_k(e^{t\Delta} f) = e^{t\Delta}(\de_k f)$, and since $\hat{\Delta}(\om) = -|\om|^2$, roughly speaking, we have $e^{t\Delta}(\de_k f) \simeq e^{-t 2^{2k}} \de_k f$. This suggests that
\begin{equation*}  
\|\de_k(e^{t\Delta} f)\|_{L^\infty} \le C \exp\Ll(-t 2^{2k}\Rr) \|\de_k f\|_{L^\infty} \le C \exp\Ll(-t 2^{2k}\Rr) 2^{-\al k} \|f\|_{\Cc^\al} ,
\end{equation*}
and therefore
\begin{equation*}  
2^{\be k} \|\de_k(e^{t\Delta} f)\|_{L^\infty} \le C  \Ll[ (2^{2k} t)^{\frac{\be-\al}{2}} \exp \Ll( -2^{2k} t \Rr)  \Rr]   t^{\frac{\al - \be}{2}}  \|f\|_{\Cc^\be}.
\end{equation*}
Since the term between square brackets is bounded uniformly over $k$ and $t$, the heuristic argument is complete. A rigorous proof can be derived using techniques similar to those exposed for Lemma~\ref{l.Bernstein+} below (see also e.g.\ \cite[Proposition~3.11]{JCH}).
\end{proof}

In view of \eqref{e.approx.scale}, we expect that for every $p,p' \in [1,\infty]$ such that $\frac 1 p + \frac 1 {p'} = 1$,
\begin{equation}  
\label{e.Bernstein0}
\sup_{k \ge -1} 2^{-\frac{dk}{p}} \|\eta_k\|_{L^{p'}} < \infty.
\end{equation}
Indeed, the relation \eqref{e.approx.scale}, and therefore the inequality \eqref{e.Bernstein0}, are immediate by scaling if the torus $\T^d$ is replaced by the full space $\R^d$ (and therefore the discrete Fourier series is replaced by the continuous Fourier transform). A rigorous proof of \eqref{e.Bernstein0} can be found e.g.\ in \cite[Lemma~B.1]{Ising}. By \eqref{e.dk-convol} and H\"older's inequality, we deduce the following lemma.

\begin{lemma}
\label{l.Bernstein}
Let $p \in [1,\infty]$. There exists $C < \infty$ such that
\begin{equation}
\label{e.Bernstein}
\|\dk f\|_{L^\infty} \le C 2^{\frac {dk}{p}} \|f\|_{L^p}
\end{equation}
for every $f \in C^\infty(\T^d)$ and $k \ge -1$. 
\end{lemma}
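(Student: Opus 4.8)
The plan is to combine the convolution representation of the Littlewood--Paley blocks, equation \eqref{e.dk-convol}, with the kernel estimate \eqref{e.Bernstein0}, using nothing deeper than Young's convolution inequality (equivalently, a pointwise application of H\"older's inequality). The whole argument is essentially one line once \eqref{e.Bernstein0} is granted, so I would spend most of the care on the endpoint exponents and on pinning down why the constant is uniform.

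First I would fix $k \ge -1$, a point $x \in \T^d$, and $f \in C^\infty(\T^d)$, and use \eqref{e.dk-convol} to write
\[
(\dk f)(x) = (\eta_k \star f)(x) = \int_{\T^d} \eta_k(x-y)\, f(y)\, \d y.
\]
Applying H\"older's inequality in the variable $y$ with the conjugate exponents $p'$ and $p$, and exploiting the translation invariance of the $L^{p'}(\T^d)$ norm, I would bound
\[
|(\dk f)(x)| \le \|\eta_k(x - \cdot)\|_{L^{p'}} \, \|f\|_{L^p} = \|\eta_k\|_{L^{p'}} \, \|f\|_{L^p}.
\]
Since the right-hand side is independent of $x$, taking the supremum over $x \in \T^d$ gives $\|\dk f\|_{L^\infty} \le \|\eta_k\|_{L^{p'}} \, \|f\|_{L^p}$.

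Second, I would insert the scaling bound. Setting $C := \sup_{k \ge -1} 2^{-\frac{dk}{p}} \|\eta_k\|_{L^{p'}}$, which is finite by \eqref{e.Bernstein0}, we have $\|\eta_k\|_{L^{p'}} \le C \, 2^{\frac{dk}{p}}$ for every $k \ge -1$, and substituting this into the previous display yields precisely \eqref{e.Bernstein}. The constant $C$ depends on $p$ and $d$ but not on $f$ or $k$, as required, and the passage from $f \in C^\infty(\T^d)$ to general $f$ in the relevant space is immediate by density since both sides are continuous in $f$ with respect to $\|\cdot\|_{L^p}$.

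I do not expect a genuine obstacle in this argument itself; the only points warranting attention are external to the lemma. The real content is the kernel estimate \eqref{e.Bernstein0}, which is quoted and rests on the near-exact scaling relation \eqref{e.approx.scale} --- the sole place where the discreteness of the phase space $\Z^d$ (equivalently, the compactness of $\T^d$) must be controlled. Beyond that, I would simply check that the two endpoints are covered: for $p = 1$ one has $p' = \infty$ and the bound reads $\|\dk f\|_{L^\infty} \le C\,2^{dk}\|f\|_{L^1}$, while for $p = \infty$ one has $p' = 1$ and the estimate degenerates to the uniform bound $\sup_{k \ge -1} \|\eta_k\|_{L^1} < \infty$, giving the trivial $\|\dk f\|_{L^\infty} \le C \|f\|_{L^\infty}$. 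Both are consistent with \eqref{e.Bernstein0}, so no case distinction is actually needed.
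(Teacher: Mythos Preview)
Your proof is correct and follows exactly the approach the paper uses: the paper states that the lemma is deduced ``by \eqref{e.dk-convol} and H\"older's inequality'' from the kernel bound \eqref{e.Bernstein0}, which is precisely what you spell out. Your additional remarks on the endpoint exponents and on density are fine but go beyond what the paper records.
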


If we had chosen to define the decomposition $f = \sum_{k \ge -1} \dk f$ from the Fourier series decomposition displayed in \eqref{e.Fourier.decomp} based on indicator functions, then we would have $\dk \dk f = \dk f$, and we could therefore replace $f$ by $\dk f$ on the right side of \eqref{e.Bernstein}. With our actual definition of $\dk$, this replacement is also possible: instead of using that 
\begin{equation*}
\1_{B(0,2^{k+1}) \setminus B(0,2^k)} \, \1_{B(0,2^{k+1}) \setminus B(0,2^k)} = \1_{B(0,2^{k+1}) \setminus B(0,2^k)} ,
\end{equation*}
we choose a smooth function $\chi' \in C^\infty_c(\R^d)$ with support in an annulus and such that $\chi' \equiv 1$ on the support of $\chi$, so that
\begin{equation*}
\chi'\, \chi = \chi.
\end{equation*}
Setting $\dk' f := \F^{-1}(\chi'(\cdot/2^k) \hat f)$, the identity above translates into 
\begin{equation}
\label{e.dk'dk}
\dk' \dk f = \dk f.
\end{equation}
Next, we verify that the argument leading to Lemma~\ref{l.Bernstein} also applies if $\dk f$ is replaced by $\dk' f$ on the left side of \eqref{e.Bernstein}. Using \eqref{e.dk'dk}, we thus obtain the following lemma.

\begin{lemma}[Bernstein inequality]
\label{l.Bernstein+}
Let $p \in [1,\infty]$. There exists $C < \infty$ such that 
\begin{equation}
\label{e.Bernstein+}
\|\dk f\|_{L^\infty} \le C 2^{\frac {dk}{p}} \|\dk f\|_{L^p}
\end{equation}
for every $f \in C^\infty(\T^d)$ and $k \ge -1$. 
\end{lemma}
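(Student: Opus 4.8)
The plan is to reproduce, almost verbatim, the argument behind Lemma~\ref{l.Bernstein}, but with the modified block $\dk'$ in place of $\dk$, and then to feed in $g = \dk f$ and invoke the reproducing identity \eqref{e.dk'dk}. Concretely, I would first establish the analogue of Lemma~\ref{l.Bernstein} for $\dk'$, namely that there is $C < \infty$ with $\|\dk' g\|_{L^\infty} \le C 2^{dk/p}\|g\|_{L^p}$ for every $g \in C^\infty(\T^d)$ and $k \ge -1$. Granting this, the lemma follows immediately by taking $g = \dk f$ and using $\dk'\dk f = \dk f$:
\[
\|\dk f\|_{L^\infty} = \|\dk'(\dk f)\|_{L^\infty} \le C\, 2^{dk/p}\,\|\dk f\|_{L^p}.
\]

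To prove the estimate for $\dk'$, the single analytic input needed is the counterpart of \eqref{e.Bernstein0} for the convolution kernels $\eta_k' := \F^{-1}(\chi_k')$ associated with $\dk'$, where $\chi_k' := \chi'(\cdot/2^k)$ for $k \ge 0$. Since $\chi' \in C^\infty_c(\R^d)$, its inverse Fourier transform $\eta' := \F^{-1}(\chi')$ is a Schwartz function and hence lies in $L^{p'}$ for every $p'$. Exactly as in \eqref{e.approx.scale}, one has $\eta_k' \simeq 2^{dk}\eta'(2^k\,\cdot\,)$ up to the usual small error coming from the discreteness of $\Z^d$, so that by scaling $\|\eta_k'\|_{L^{p'}} \ls 2^{dk(1 - 1/p')} = 2^{dk/p}$, uniformly in $k$. (As in the footnote to \eqref{e.approx.scale}, the cleanest way to deal with the discretisation error is to work with the continuous Fourier transform on $\R^d$, where the scaling is exact.) Since $\dk' g = \eta_k' \star g$ by definition, in analogy with \eqref{e.dk-convol}, H\"older's inequality with exponents $p, p'$ then gives $\|\dk' g\|_{L^\infty} \le \|\eta_k'\|_{L^{p'}}\|g\|_{L^p} \le C\, 2^{dk/p}\|g\|_{L^p}$, as desired.

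The remaining points are routine, and indeed this is what the phrase ``the argument leading to Lemma~\ref{l.Bernstein} also applies'' is meant to capture: the verification that the primed kernels obey the same bound as the unprimed ones is identical in structure. For the single low-frequency index $k = -1$ one cannot take $\chi'$ supported in an annulus (since $\supp \tilde\chi$ contains a neighbourhood of the origin); instead one picks a separate cutoff $\chi_{-1}' \in C^\infty_c(\R^d)$ with $\chi_{-1}' \equiv 1$ on $\supp \tilde\chi$, which still yields $\dk'\dk f = \dk f$ at $k=-1$, and the kernel bound holds trivially as this is a single fixed scale. The only genuinely delicate step is the passage from the exact scaling on $\R^d$ to the approximate scaling \eqref{e.approx.scale} on the torus; but this is precisely the technical point already addressed for the unprimed blocks and requires nothing new here.
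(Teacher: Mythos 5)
Your proposal is correct and follows exactly the route the paper intends: prove the unprimed Bernstein bound for the primed blocks $\dk'$ via the scaling estimate $\|\eta_k'\|_{L^{p'}} \ls 2^{dk/p}$ and H\"older's inequality, then conclude from the reproducing identity $\dk'\dk f = \dk f$. Your explicit handling of the $k=-1$ case (a separate cutoff equal to $1$ on $\supp \td\chi$) is a detail the paper leaves implicit, but it is the right fix and changes nothing in the argument.
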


The following proposition uses the previous lemma to bound $p$-th moments of the H\"older norm of a random distribution in terms of the $p$-th moments of its decomposition in Fourier space.

\begin{proposition}[Boundedness criterion]
\label{p.estim.norm}
Let $\be < \al-\frac d p$. There exists $C < \infty$ such that for every random distribution $f$ on $\T^d$, we have
\begin{equation}
\label{e.estim.norm}
\E  \Ll[\|f\|_{\Cc^\be}^{p}\Rr] \le C \sup_{k \geq -1} 2^{\alpha k p} \, \E \Ll[ \|\delta_{k}f\|_{L^{p}}^{p}\Rr]. 
\end{equation}
\end{proposition}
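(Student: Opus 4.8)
The plan is to bound the $\Cc^\be$ norm directly from its definition \eqref{e.def.B-norm} by trading the $L^\infty$ norm of each Littlewood--Paley block for an $L^p$ norm via the Bernstein inequality (Lemma~\ref{l.Bernstein+}), at the cost of a factor $2^{dk/p}$, and then summing the resulting geometric-type series in $k$. The gap $\be < \al - d/p$ is exactly what makes this sum converge.

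First I would start from $\|f\|_{\Cc^\be} = \sup_{k \ge -1} 2^{\be k} \|\dk f\|_{L^\infty}$. The supremum over $k$ is awkward to control in $L^p(\Omega)$ directly, so the key idea is to replace it by an $\ell^p$ sum: since $\sup_k a_k \le \big(\sum_k a_k^p\big)^{1/p}$ for nonnegative $a_k$, we have
\begin{equation*}
\|f\|_{\Cc^\be}^p = \sup_{k \ge -1} 2^{\be k p} \|\dk f\|_{L^\infty}^p \le \sum_{k \ge -1} 2^{\be k p} \|\dk f\|_{L^\infty}^p.
\end{equation*}
Taking expectations and using linearity gives
\begin{equation*}
\E\Ll[\|f\|_{\Cc^\be}^p\Rr] \le \sum_{k \ge -1} 2^{\be k p} \, \E\Ll[\|\dk f\|_{L^\infty}^p\Rr].
\end{equation*}
Now I would apply Lemma~\ref{l.Bernstein+}, which yields $\|\dk f\|_{L^\infty} \le C 2^{dk/p} \|\dk f\|_{L^p}$, and hence $\|\dk f\|_{L^\infty}^p \le C^p 2^{dk} \|\dk f\|_{L^p}^p$, so that
\begin{equation*}
\E\Ll[\|f\|_{\Cc^\be}^p\Rr] \le C^p \sum_{k \ge -1} 2^{(\be + d/p) k p} \, \E\Ll[\|\dk f\|_{L^p}^p\Rr].
\end{equation*}

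To conclude, I would factor out the supremum $\sup_{k} 2^{\al k p}\,\E[\|\dk f\|_{L^p}^p]$ appearing on the right-hand side of \eqref{e.estim.norm}, leaving the scalar series $\sum_{k \ge -1} 2^{(\be + d/p - \al) k p}$. Since the hypothesis $\be < \al - d/p$ is equivalent to $\be + d/p - \al < 0$, this is a convergent geometric series whose sum is a finite constant depending only on $\al,\be,d,p$ (not on $f$); absorbing it into $C$ gives exactly \eqref{e.estim.norm}. The only mild subtlety — the step I expect to need the most care — is the first inequality $\sup_k \le \sum_k$, which costs nothing qualitatively but is where one must be sure the $\ell^p$-summation is legitimate; everything else is a routine application of Bernstein plus summation of a geometric series. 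One should also note that Lemma~\ref{l.Bernstein+} is stated for $f \in C^\infty(\T^d)$, so strictly the estimate is first established for smooth (random) $f$ and then extended to general random distributions by the density/approximation built into the definition of $\Cc^\be$ in Remark~\ref{r.completion}.
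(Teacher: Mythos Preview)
Your proposal is correct and follows essentially the same argument as the paper: apply the Bernstein inequality (Lemma~\ref{l.Bernstein+}) to trade $L^\infty$ for $L^p$ at the cost of $2^{dk/p}$, replace the supremum over $k$ by a sum, take expectations, and sum the resulting geometric series using $\be + d/p - \al < 0$. The only cosmetic difference is that the paper applies Bernstein before enlarging the supremum to a sum, whereas you do these in the opposite order; this is immaterial.
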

\begin{proof}
By definition of the $\Cc^{\be}$ norm and then Lemma \ref{l.Bernstein+}, we have
\begin{align*}
\|f\|_{\Cc^{\be}}^{p} = \sup_{k \geq -1} 2^{\be k p} \|\de_{k}f\|_{L^{\infty}}^{p} \le C \sup_{k \geq -1} 2^{k(\be p + d)} \|\de_{k}f\|_{L^{p}}^{p}. 
\end{align*}
In order to take the expectation of $\|\de_{k}f\|_{L^{p}}^{p}$ directly, we enlarge the supremum on the right side above to a sum, and get
\begin{align*}
\E \|f\|_{\Cc^{\be}}^{p} \le C \sum_{k \geq -1} 2^{k(\be p + d)} \E \|\de_k f\|_{L^{p}}^{p} = C \sum_{k \geq -1} 2^{kp(\be + \frac{d}{p} - \al)} 2^{\al k p} \E\|\de_{k}f\|_{L^{p}}^{p}. 
\end{align*}
The announced estimate then follows, since $\al > \be + \frac{d}{p}$. 
\end{proof}

\begin{remark}  
With our definition of the space $\Cc^\be$ as a completion, the fact that a distribution $f$ satisfies $\|f\|_{\Cc^\be} < \infty$ does not imply that $f \in \Cc^\be$. However, in the context of Proposition~\ref{p.estim.norm}, when the right side of \eqref{e.estim.norm} is finite, we do have $f \in \Cc^\be$ with probability one. Indeed, we can always pick $\be' \in (\be,\al - \frac d p)$, deduce that $\|f\|_{\Cc^{\be'}}$ is finite with probability one, and conclude by Remark~\ref{r.completion}.
\end{remark}

\section{White noise and Nelson's estimate} \label{sec:Gaussian}

Proposition \ref{p.estim.norm} gives a criterion for determining whether a random distribution belongs to $\Cc^\be$ by looking at the $p$-th moment of its Paley-Littlewood blocks. However, it is often difficult to get sharp bounds of high moments of a random distribution. On the other hand, fortunately, the objects we encounter in the construction of $\Phi^4_3$ (and many other Gaussian models) are all built from multiplications of finitely many Gaussian random variables. These objects belong to a Wiener chaos of finite order, and we can therefore leverage on the equivalence of moments property, also often called Nelson's estimate, to deduce high-moment estimates from second-moment calculations. The purpose of this section is to present these arguments, and state the implied simpler criterion for belonging to $\Cc^\be$. In this section, we continue to work in arbitrary space dimension $d$. We only sketch some well-known arguments concerning iterated integrals and Wiener chaos, and refer the interested reader to \cite[Chapter~9]{kuo} for a more detailed exposition of these topics.

We start by introducing the space-time white noise. Formally, the space-time white noise $\xi$ is a centred Gaussian distribution on $\R \times \T^d$ with covariance 
\begin{equation}\label{e:delta}
\E \xi(t,x) \, \xi(t^{\prime}, x^{\prime}) = \delta(t - t^{\prime}) \, \delta^d(x - x^{\prime}) \;,
\end{equation}
where $\delta( \cdot) $ and $\delta^d( \cdot) $ denote Dirac delta functions over $\R$ and $\T^d$ respectively. Testing \eqref{e:delta} against a function $\ph  : \R \times \T^d \to \R$ leads us to postulate that
\begin{equation}\label{e:cov.xi}
\E \Ll[\xi(\ph)^2 \Rr] = \| \ph \|_{L^2(\R \times \T^d)}^2 \;. 
\end{equation}
\begin{definition}\label{def:whitenoise}
A space-time white noise over $\R\times \T^d$ is a family of centred Gaussian random variables $\{ \xi(\ph),  \, \ph \in L^2(\R \times \T^d)\}$ such that \eqref{e:cov.xi} holds.
\end{definition}
The existence of a space-time white noise follows from Kolmogorov's extension theorem. We prefer here to take a more constructive approach, based on Fourier analysis. 

Let $(W(\cdot,\omega))_{\omega \in \Z^d}$ be a family of complex-valued Brownian motions over $\R$. These Brownian motions are independent for 
different values of $\om$ except for the constraint $W(t, \om) = \overline{W(t,-\om)}$ (so that the white noise we are building is real-valued).  The magnitude of the variance is fixed by the condition
\begin{align*}
\E\Ll[ W(t,\om) W(t,-\om')\Rr] = 
\begin{cases}
|t| \quad &\text{if } \om = \om',\\
0 \quad & \text{otherwise.}  
\end{cases}
\end{align*}
We then set $\xi$ to be the time derivative of the cylindrical Wiener process
\begin{equation*}  
(t,x) \mapsto \sum_{\omega \in \Z^d} W(t,\omega) e^{2i\pi \omega \cdot x}.
\end{equation*}
More precisely, for every $\ph \in L^2(\R\times \T^d)$, we set
\begin{equation*}  
\xi(\ph) := \sum_{\omega \in \Z^d} \int_{t = -\infty}^{+\infty} \hat \ph(t,\omega) \, \d W(t,\omega),
\end{equation*}
where the integral is interpreted in It\^o's sense, and the notation $\hat \ph(t,\omega)$ stands for $\hat{\ph(t,\cdot)}(\omega)$. By It\^o's and Fourier's isometries, this expression is well-defined and the relation \eqref{e:cov.xi} is satisfied. Since $\xi(\ph)$ is also a centered Gaussian, this provides us with a construction of white noise. 
We use the somewhat informal notation
\begin{align*}
\xi(\ph) := \int_{\R \times \T^d} \ph (z) \, \xi(\d z) \;,
\end{align*}
although $\xi$ is almost surely not a measure. In particular, for a given $\ph \in L^2(\R \times \T^d)$, the random variable $\xi(\ph)$ is only defined outside of a set of measure zero, and a priori this set depends on the choice of $\ph$. 

As explained for example in \cite[Chapter~9]{kuo} or \cite[Section~1.1.2]{Nualart}, we can define iterated Wiener-It\^o integrals based on $\xi$. For each $k \ge 1$ and $\ph \in L^2((\R\times \T^d)^k)$, we denote the iterated integral of $\ph$ by
\begin{equation*}  
\xi^{\otimes k}(\ph) = \int_{(\R\times \T^d)^k} \ph(z_1,\ldots,z_k) \, \xi(\d z_1) \, \cdots \, \xi(\d z_k).
\end{equation*}
Denoting by $\td \ph$ the symmetrized function obtained from $\ph$:
\begin{equation}  \label{e.symmetrize}
\td \ph(z_1,\ldots,z_k) := \frac 1 {k!} \sum_{\sigma \in S_k} \ph(z_{\sigma(1)}, \ldots,z_{\sigma(k)}),
\end{equation}
where $S_k$ denotes the permutation group over $\{1,\ldots,k\}$, 
we have
\begin{equation}  
\label{e.symmetrization}
\xi^{\otimes k}(\ph) = \xi^{\otimes k} (\td \ph) = k! \int_{(\R\times \T^d)^k} \td \ph(z_1,\ldots,z_k) \, \1_{\{t_1  < \cdots < t_k \}} \, \xi(\d z_1) \, \cdots \, \xi(\d z_k),
\end{equation}
where $t_i$ is the time component of $z_i$. Moreover, we have the isometry property
\begin{equation}
\label{e.isometry}
\E \Ll[ \xi^{\otimes k}(\ph)^2 \Rr] = \E \Ll[ \xi^{\otimes k}(\td \ph)^2 \Rr] = \int_{(\R \times \T^d)^k} \td \ph^2(z_1,\ldots,z_k)  \, \d z_1 \, \cdots \, \d z_k,
\end{equation} 
and by Jensen's inequality,
\begin{equation}  \label{e.io1}
\int_{(\R \times \T^d)^k} \td \ph^2(z_1,\ldots,z_k)  \, \d z_1 \, \cdots \, \d z_k \le \int_{(\R \times \T^d)^k} \ph^2(z_1,\ldots,z_k)  \, \d z_1 \, \cdots \, \d z_k.
\end{equation}
Assuming now for notational convenience that $\ph$ is a symmetric function, that is, $\ph = \td \ph$, we may rewrite the expression \eqref{e.symmetrization} for the iterated integral $\xi^{\otimes k}(\ph)$ as a series of finite-dimensional iterated Wiener-It\^o integrals:
\begin{equation} 
\label{e.def.via.Fourier}
\xi^{\otimes k}(\ph) = k! \sum_{\om_1, \ldots, \om_k \in \Z^d} \int_{t_1 < \cdots < t_k} \hat \ph(t_1, \om_1, \ldots, t_k, \om_k) \, \d W(t_1,\om_1) \, \cdots \, \d W(t_k,\om_k),
\end{equation}
where
\begin{equation*}  
\hat \ph(t_1, \om_1, \ldots, t_k, \om_k) \\
:= \int_{(\T^d)^k} \ph(t_1,x_1,\ldots,t_k, x_k) e^{-2i\pi(\om_1 \cdot x_1 + \cdots + \om_k \cdot x_k)} \, \d x_1 \, \cdots \, \d x_k.
\end{equation*}
See also \cite[Section~9.6]{kuo} for a definition of iterated integrals.
We let
\begin{equation*}  
\mcl H_k := \Ll\{ \xi^{\otimes k}(\ph), \ \ph \in L^2((\R\times \T^d)^k) \Rr\} 
\end{equation*}
denote the $k$-th \emph{Wiener chaos}, with $\mcl H_0 = \R$. Denote by $(\Omega,\mcl F,\P)$ the probability space on which $\xi$ is defined. The spaces $\mcl H_k$ are orthogonal in $L^2(\Omega,\mcl F,\P)$. Moreover, although we will not use it, we recall that if $\mcl F$ is the $\sigma$-algebra generated by $\mcl H_1$, then
\begin{equation*}  
L^2(\Omega,\mcl F,\P) = \bigoplus_{k = 0}^{+\infty} \mcl H_k
\end{equation*}
(the interested reader may find this result in \cite[Section~9.5]{kuo}).
The more important property for our purpose is the following.
\begin{lemma}
\label{l.identif.Hn}
For each $n \in \N$, the closure in $L^2(\Omega,\mcl F,\P)$ of the linear span of the set
\begin{equation}  \label{e.lin.comb}
\Ll\{\xi(\ph_1) \, \cdots \, \xi(\ph_k), \ k \le n, \ \ph_1, \ldots, \ph_k \in L^2(\R \times \T^d) \Rr\}.
\end{equation}
coincides with
\begin{equation}  \label{e.def.Hgen}
\mcl H_{\le n} := \bigoplus_{k = 0}^n \mcl H_k.
\end{equation}
\end{lemma}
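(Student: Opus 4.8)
The plan is to prove the two inclusions between $\mcl H_{\le n}$ and the closed subspace, which I will denote $V_n \subseteq L^2(\Omega,\mcl F,\P)$, defined as the closure of the linear span of the set in \eqref{e.lin.comb}. Throughout I would use two structural facts: that $\mcl H_{\le n} = \bigoplus_{k=0}^n \mcl H_k$ is closed, being a finite sum of mutually orthogonal closed subspaces; and that for a symmetric $\ph$ the isometry \eqref{e.isometry} makes the map $\ph \mapsto \xi^{\otimes k}(\ph)$ a bounded linear bijection (an isometry up to a constant) from the symmetric subspace of $L^2((\R\times\T^d)^k)$ onto $\mcl H_k$, so that $\mcl H_k$ is itself closed and this map sends dense sets to dense sets.

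For the inclusion $V_n \subseteq \mcl H_{\le n}$, since $\mcl H_{\le n}$ is closed it suffices to check that each generator $\xi(\ph_1)\cdots\xi(\ph_k)$ with $k \le n$ lies in $\mcl H_{\le k}$. This is the product formula for Wiener integrals: expanding the product of the $k$ first-chaos factors by Wick's theorem (summing over all pairings of the $k$ Gaussian variables) yields a finite sum of iterated integrals $\xi^{\otimes j}(\cdot)$ with $j \in \{k, k-2, k-4, \ldots\}$, each of order $j \le k \le n$. Hence every generator, and therefore $V_n$, lies in $\mcl H_{\le n}$.

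For the reverse inclusion $\mcl H_{\le n} \subseteq V_n$, I would argue by induction on $k$ that $\mcl H_k \subseteq V_n$ for every $0 \le k \le n$. The cases $k = 0$ (constants) and $k = 1$ (the elements $\xi(\ph)$ occur directly in \eqref{e.lin.comb}) are immediate. For the inductive step, fix $\psi \in L^2(\R\times\T^d)$ and apply the Wick expansion of the previous paragraph to the pure power $\xi(\psi)^k$, which produces only diagonal iterated integrals:
\begin{equation*}
\xi(\psi)^k = c_k\, \xi^{\otimes k}(\psi^{\otimes k}) + \sum_{0 \le j < k} c_{j}\, \xi^{\otimes j}(\psi^{\otimes j}), \qquad c_k \neq 0,
\end{equation*}
where the $c_j$ are the coefficients of the Hermite expansion of the monomial $x^k$ (so only $j$ of the same parity as $k$ occur). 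The left-hand side is a product of $k \le n$ first-chaos elements, hence lies in $V_n$, while every summand with $j < k$ lies in $\mcl H_j \subseteq V_n$ by the induction hypothesis; since $c_k \neq 0$, solving for the leading term gives $\xi^{\otimes k}(\psi^{\otimes k}) \in V_n$. By linearity $\xi^{\otimes k}(\ph) \in V_n$ for every $\ph$ in the linear span of the diagonal tensors $\{\psi^{\otimes k} : \psi \in L^2\}$, a span which is dense in the symmetric subspace of $L^2((\R\times\T^d)^k)$ by polarization. Transporting this dense set through the isomorphism $\ph \mapsto \xi^{\otimes k}(\ph)$ and using that $V_n$ is closed yields $\mcl H_k \subseteq V_n$, completing the induction and the proof.

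The routine parts are the Wick product formula and the closedness bookkeeping. The step I expect to be the main obstacle is the reverse inclusion: one must combine the Hermite-polynomial identity relating the pure power $\xi(\psi)^k$ to the iterated integral $\xi^{\otimes k}(\psi^{\otimes k})$ — so that the lower-order remainder is exactly what the induction hypothesis controls — with the polarization argument that the diagonal tensors $\psi^{\otimes k}$ span a dense set of symmetric kernels. It is this density, together with the isometry \eqref{e.isometry}, that upgrades the single-function identity into the statement for all of $\mcl H_k$.
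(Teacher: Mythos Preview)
Your proof is correct and follows essentially the same approach as the paper's sketch: both hinge on the Hermite-polynomial identity $\xi^{\otimes k}(\psi^{\otimes k}) = H_k(\xi(\psi), \|\psi\|_{L^2}^2)$ to pass between monomials $\xi(\psi)^k$ and iterated integrals, with the lower-order remainders absorbed inductively. Your version is more complete than the paper's, making explicit the induction, the polarization argument for density of diagonal tensors, and the closedness bookkeeping that the paper defers to a textbook reference.
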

\begin{proof}[Sketch of proof]
Let $H_n = H_n(X,T)$ be the Hermite polynomials, defined recursively by 
\begin{equation}
\label{e:def-Hermite1}
\left\{
\begin{array}{l}
H_0 = 1, \\
H_{n} = X H_{n-1} - T \, \partial_X H_{n-1} \qquad ( n \in \N),
\end{array}
\right.
\end{equation}
so that $H_1 = X$, $H_2 = X^2-T$, $H_3 = X^3 - 3 XT$, etc. By a recursive application of It\^o's formula, see \cite[Theorem 9.6.9]{kuo}, we have, for every $\ph \in L^2(\R \times \T^d)$ and $n \in \N$, 
\begin{equation}
\label{e.Hermite.int}
\xi^{\otimes n}(\ph^{\otimes n}) = H_n(\xi(\ph), \|\ph\|_{L^2(\R\times \T^d)}^2).
\end{equation}
This relation already shows that every $n$-fold iterated integral is a linear combination of elements of \eqref{e.lin.comb}. Conversely, it also shows that every Hermite polynomial in $\xi(\ph)$ of degree at most $n$ --- and therefore every polynomial in $\xi(\ph)$ of degree at most $n$ --- belongs to $\mcl H_{\le n}$. The full proof of Lemma~\ref{l.identif.Hn} can be derived from \cite[Theorem 9.5.4]{kuo}.
\end{proof}

By extension, we say that a stochastic process $\tau : \R \to \Ss'(\T^d)$ belongs to $\mcl H_n$ (resp.\ $\mcl H_{\le n}$) if for every $t$ and smooth test function $\phi$, we have 
\begin{equation*}  
\langle \tau(t), \phi \rangle \in \mcl H_n \ \mbox{(resp. $\mcl H_{\le n}$)}.
\end{equation*}
If $\tau(t)$ is in fact a continuous function of the space variable, this boils down to asking that $\tau(t,x) \in \mcl H_n$ (resp. $\mcl H_{\le n}$) for each $x \in \T^d$. By Lemma~\ref{l.identif.Hn}, the approximations to the diagrams we want to construct, see \eqref{eq:processes}, all belong to $\mcl H_{\le 5}$. Since Wiener chaoses are closed, this remains true of their candidate limits (see also Section~\ref{sec:bounds} for explicit representations).

Since $\delta_k f$ is linear in $f$ for every $k$, the fact that $f$ belongs to some Wiener chaos implies that $\delta_{k} f$ belongs to the Wiener chaos of the same order. Thus, in view of Proposition~\ref{p.estim.norm}, the possibility to estimate arbitrarily high moments of elements of a fixed Wiener chaos from their $L^2$ moments will be very convenient.
\begin{proposition}[Nelson's estimate]
\label{p.nelson}
For every $n \ge 1$ and $p \in [2,\infty)$, there exists a constant $C < \infty$ such that for every $X \in \mcl H_{\le n}$,
\begin{equation*}  
\E \Ll[ |X|^p \Rr]^\frac 1 p \le C \E \Ll[ X^2 \Rr]^\frac 1 2 .
\end{equation*}
If $X \in \mcl H_{n}$, then we can take $C = (p-1)^{\frac{n}{2}}$. 
\end{proposition}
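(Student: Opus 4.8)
The plan is to reduce the general statement for $X \in \mcl H_{\le n}$ to the case of a single Gaussian and to exploit hypercontractivity of the Ornstein--Uhlenbeck semigroup. First I would recall the hypercontractivity estimate: the Ornstein--Uhlenbeck semigroup $(T_r)_{r \ge 0}$ acting on functionals of the Gaussian field $\xi$ is a contraction from $L^q$ to $L^p$ whenever $e^{2r} \ge (p-1)/(q-1)$. The key eigenfunction property is that $T_r$ acts on the $k$-th Wiener chaos $\mcl H_k$ as multiplication by $e^{-kr}$; this follows from the representation \eqref{e.Hermite.int}, since Hermite polynomials are the eigenfunctions of the one-dimensional Ornstein--Uhlenbeck generator. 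I would take this hypercontractivity as the analytic input, citing \cite{kuo} for its proof.

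For the sharp constant in the case $X \in \mcl H_n$, I would argue directly. Set $q = 2$ and choose $r$ so that $e^{2r} = p - 1$, which makes $T_r : L^2 \to L^p$ a contraction. Since $X \in \mcl H_n$, we have $T_r X = e^{-nr} X$, and therefore
\begin{equation*}
e^{-nr} \, \E\Ll[ |X|^p \Rr]^{\frac 1 p} = \E\Ll[ |T_r X|^p \Rr]^{\frac 1 p} \le \E\Ll[ X^2 \Rr]^{\frac 1 2}.
\end{equation*}
Rearranging and substituting $e^{2r} = p-1$ gives $\E[|X|^p]^{1/p} \le e^{nr} \E[X^2]^{1/2} = (p-1)^{n/2} \E[X^2]^{1/2}$, which is exactly the claimed constant $C = (p-1)^{n/2}$.

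For the general case $X \in \mcl H_{\le n}$, the same contraction argument applied with the orthogonal chaos decomposition $X = \sum_{k=0}^n X_k$, $X_k \in \mcl H_k$, gives $T_r X = \sum_{k=0}^n e^{-kr} X_k$, so $T_r X$ no longer equals a scalar multiple of $X$. I would instead bound $\E[|X|^p]^{1/p}$ by first noting that $X = T_r(T_{-r} X)$ fails because $T_{-r}$ is not defined, so the cleanest route is to apply hypercontractivity componentwise together with orthogonality. Specifically, using $\|X_k\|_{L^p} \le (p-1)^{k/2}\|X_k\|_{L^2}$ from the sharp case and the triangle inequality in $L^p$, followed by $\|X_k\|_{L^2} \le \|X\|_{L^2}$ (orthogonality of the chaoses), one obtains $\|X\|_{L^p} \le \big(\sum_{k=0}^n (p-1)^{k/2}\big)\|X\|_{L^2}$, which yields a finite constant $C$ depending only on $n$ and $p$. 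This proves the first assertion.

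The step I expect to require the most care is the transfer of the scalar hypercontractivity statement to arbitrary elements of a fixed chaos $\mcl H_n$ built from the abstract white noise of Definition \ref{def:whitenoise}, rather than from a single Gaussian variable. The clean way is to invoke the eigenfunction identity $T_r|_{\mcl H_k} = e^{-kr}$, which reduces everything to the semigroup's mapping properties; the underlying hypercontractivity of $(T_r)$ on the full Gaussian space is the genuinely nontrivial analytic fact, and I would simply cite it rather than reprove it. The elementary reduction via Hermite polynomials in \eqref{e.Hermite.int} guarantees that the chaos $\mcl H_n$ is exactly the closed span of these eigenfunctions, so no measurability or density subtlety is lost.
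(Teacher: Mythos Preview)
Your argument is correct and coincides essentially with the paper's \emph{second} proof of Proposition~\ref{p.nelson} (given in the appendix): there too one uses the eigenfunction identity $T_r|_{\mcl H_n} = e^{-nr}$ and hypercontractivity of the Ornstein--Uhlenbeck semigroup with $q(t)=1+(p-1)e^{2t}$ to obtain the sharp constant $(p-1)^{n/2}$ for $X\in\mcl H_n$. The only difference is that the paper derives hypercontractivity itself from the Gaussian log-Sobolev inequality (Lemma~\ref{le:log-sobolev}) via a standard Gross-type differentiation argument, whereas you take hypercontractivity as a black box; your treatment of the inhomogeneous case $X\in\mcl H_{\le n}$ via the triangle inequality and orthogonality is slightly more explicit than the paper's, which simply remarks that the general case follows.

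It is worth noting, however, that the paper's \emph{primary} proof (in the main text) follows a genuinely different route: it proceeds by induction on $n$, viewing the iterated integral $\xi^{\otimes n}(\ph)$ as a continuous martingale $M_t$ in the time filtration $(\mcl F_t)$, computing its quadratic variation $\langle M\rangle_t$ as an integral of $(n-1)$-fold iterated integrals, and combining the Burkholder--Davis--Gundy inequality (Lemma~\ref{l.bdg}) with Minkowski's inequality and the induction hypothesis. This BDG approach does not yield the sharp constant $(p-1)^{n/2}$, but it has the advantage of being robust to perturbations away from the Gaussian setting --- as the paper emphasises, it transfers to the jump-martingale structure arising in Ising-type approximations to $\Phi^4$, where no Ornstein--Uhlenbeck semigroup or log-Sobolev inequality is readily available.
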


We now give a proof of Proposition~\ref{p.nelson} based on the Burkholder-Davis-Gundy inequality (\cite[Section~IV.4]{RevYor}). The appendix contains an alternative argument based on the logarithmic Sobolev inequality. 
\begin{lemma}[BDG inequality]
	\label{l.bdg}
	Let $p \in (0,\infty)$. There exists $C < \infty$ such that if $(M_t)_{t \ge 0}$ is a continuous local martingale starting from $0$, then
	\begin{equation*}  
	\E \Ll[ \sup_{0 \le s \le t} |M_s|^p \Rr] \le C \E \Ll[ \langle M \rangle_t^{\frac p 2} \Rr] ,
	\end{equation*}
	where $\langle M \rangle$ denotes the quadratic variation of $M$. 
\end{lemma}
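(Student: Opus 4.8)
The plan is to prove the bound for the running maximum $M_t^* := \sup_{0 \le s \le t}|M_s|$ (so that the left-hand side is exactly $\E[(M_t^*)^p]$) and to obtain it uniformly over all $p \in (0,\infty)$ by means of Burkholder's \emph{good-$\lambda$ inequality}. As a preliminary reduction I would replace $M$ by the stopped martingale $M^{T_n}$ with $T_n := \inf\{s \ge 0 : |M_s| \vee \langle M\rangle_s \ge n\}$. For each $n$ the process $M^{T_n}$ is a bounded martingale, so every moment appearing below is finite (this is what legitimises the absorption step) and the stochastic integrals are genuine martingales. Since both $\E[(M_t^*)^p]$ and $\E[\langle M\rangle_t^{p/2}]$ increase to their values for $M$ as $n \to \infty$, it suffices to prove the inequality for bounded $M$ and pass to the limit by monotone convergence.

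The heart of the argument is the distributional estimate: for every $\beta > 1$ and $\delta \in (0,1)$,
\begin{equation*}
\P\Ll(M_t^* > \beta\lambda, \ \langle M\rangle_t^{1/2} \le \delta\lambda\Rr) \le \frac{\delta^2}{(\beta-1)^2}\,\P\Ll(M_t^* > \lambda\Rr), \qquad \lambda > 0.
\end{equation*}
To establish it I would introduce the stopping times $S := \inf\{s : |M_s| > \lambda\}$, $T := \inf\{s : |M_s| > \beta\lambda\}$ and $R := \inf\{s : \langle M\rangle_s > \delta^2\lambda^2\}$, and consider the continuous martingale $U_s := M_{s\wedge T\wedge R} - M_{s\wedge S\wedge R}$. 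Since $S \le T$, a direct computation gives $\langle U\rangle_t = \langle M\rangle_{t\wedge T\wedge R} - \langle M\rangle_{t\wedge S\wedge R} \le \delta^2\lambda^2\,\1_{\{S \le t\}}$, using continuity of $\langle M\rangle$ and the definition of $R$; hence $\E[U_t^2] = \E[\langle U\rangle_t] \le \delta^2\lambda^2\,\P(M_t^* > \lambda)$. On the event in the left-hand side one has $S < T \le t < R$, so there $U_t = M_T - M_S$ and, by continuity of $M$, $|U_t| \ge |M_T| - |M_S| = (\beta - 1)\lambda$; Chebyshev's inequality applied to $U_t$ then yields the claim with constant $\delta^2/(\beta-1)^2$.

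With the good-$\lambda$ inequality in hand I would integrate it in the layer-cake representation $\E[(M_t^*)^p] = \int_0^\infty p\lambda^{p-1}\,\P(M_t^* > \lambda)\,\d\lambda$. Substituting $\lambda \mapsto \beta\lambda$ on the left and bounding $\P(M_t^* > \beta\lambda) \le \frac{\delta^2}{(\beta-1)^2}\P(M_t^* > \lambda) + \P(\langle M\rangle_t^{1/2} > \delta\lambda)$ on the right leads to
\begin{equation*}
\beta^{-p}\,\E[(M_t^*)^p] \le \frac{\delta^2}{(\beta-1)^2}\,\E[(M_t^*)^p] + \delta^{-p}\,\E[\langle M\rangle_t^{p/2}].
\end{equation*}
Fixing $\beta = 2$ and choosing $\delta$ small enough that $2^{-p} - \delta^2 > 0$, the first term on the right is absorbed into the left (here the finiteness of $\E[(M_t^*)^p]$ secured by the localisation is essential), and one reads off the bound with $C$ depending only on $p$. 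For the range $p \ge 2$ that is actually used in Nelson's estimate, I would point out a shorter route bypassing the good-$\lambda$ machinery: apply It\^o's formula to the $C^2$ function $x \mapsto |x|^p$, take expectations to kill the martingale part, bound $|M_s|^{p-2} \le (M_t^*)^{p-2}$, and combine H\"older's inequality (with exponents $\frac{p}{p-2}$ and $\frac p2$) with Doob's $L^p$ maximal inequality; the same absorption step then gives the conclusion.

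The main obstacle is the good-$\lambda$ inequality itself. One must set up the three nested stopping times correctly and, crucially, verify that the $L^2$ norm of the stopped increment $U_t$ is controlled by $\delta^2\lambda^2\,\P(M_t^* > \lambda)$ rather than a larger quantity. This is exactly the point where the continuity of both $M$ and $\langle M\rangle$ is used, and it is the only place in the proof where the bracket $\langle M\rangle$ is brought to bear on the size of the excursions of $M$.
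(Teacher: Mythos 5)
Your proposal is correct, but there is nothing in the paper to compare it against: the paper states Lemma~\ref{l.bdg} without proof, simply citing \cite[Section~IV.4]{RevYor}, since the BDG inequality is treated there as a classical input rather than something to be established. What you have written is essentially the standard Burkholder good-$\lambda$ proof (which is also, in substance, the proof given in the cited reference): the localisation via $T_n$ legitimately reduces to bounded martingales and justifies the absorption step; the stopped-increment martingale $U_s = M_{s\wedge T\wedge R} - M_{s\wedge S\wedge R}$ has $\langle U\rangle_t = \langle M\rangle_{t \wedge T \wedge R} - \langle M \rangle_{t \wedge S \wedge R} \le \delta^2\lambda^2\,\1_{\{S\le t\}}$ exactly as you claim, and the continuity of $M$ at the hitting times $S$, $T$ gives $|M_S|=\lambda$, $|M_T|=\beta\lambda$, hence $|U_t|\ge(\beta-1)\lambda$ on the relevant event, so Chebyshev yields the good-$\lambda$ bound; the layer-cake integration and absorption then go through as written. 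Two small remarks: strictly speaking $\{S\le t\}\subset\{\sup_{s\le t}|M_s|\ge\lambda\}$, so your distributional inequality holds with $\P(M_t^*\ge\lambda)$ on the right, but this is harmless since the two tail functions integrate identically against $p\lambda^{p-1}\,\d\lambda$. Second, your alternative It\^o-plus-Doob route for $p\ge2$ is not only valid but is in fact all the paper needs: the first proof of Proposition~\ref{p.nelson} invokes Minkowski's inequality for the exponent $\frac p2\ge1$ and only ever applies Lemma~\ref{l.bdg} with $p\ge2$, so the shorter argument would suffice for the application, while your good-$\lambda$ argument establishes the lemma in the full stated range $p\in(0,\infty)$.
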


\begin{proof}[First proof of Proposition~\ref{p.nelson}]
	We show the result by induction on $n$. For $n = 0$, the space $\mcl H_0$ only contains constants, so the result is obvious. We now fix $n \ge 1$. 
	We need to verify the property for random variables of the form $\xi^{\otimes n}(\ph)$, $\ph \in L^2((\R \times \T^d)^n)$. By \eqref{e.symmetrization}, we may assume that $\ph$ is symmetric in its variables. 
%
	Let $\mcl F_t$ be the $\sigma$-algebra generated by 
	\begin{equation*}  
	\Ll\{\xi(\ph), \ \supp \ph \subset (-\infty,t] \times \T^d\Rr\}.
	\end{equation*}
	The process
	\begin{equation*}  
	M_t := \int_{(\R \times \T^d)^n} \ph(z_1,\ldots,z_n) \, \1_{\{t_1 < \cdots < t_n < t\}} \, \xi(\d z_1) \, \cdots \, \xi(\d z_n) \qquad (t \in \R)
	\end{equation*}
	is an $(\mcl F_t)$-martingale. This can be justified either by approximation of $\ph$ by elementary functions which vanish on the diagonal (see \cite[(1.10)]{Nualart}, and take the $A_i$ there of product form), or by appealing to the representation \eqref{e.def.via.Fourier}. Moreover,
	\begin{equation*}  
	\langle M \rangle_t = \int_{\R \times \T^d} \Ll( \int_{(\R \times \T^d)^{n-1}} \ph(z_1,\ldots,z_n) \, \1_{\{t_1 < \cdots < t_n < t\}} \, \xi(\d z_1) \, \cdots \, \xi(\d z_{n-1})  \Rr)^2 \, \d z_n.
	\end{equation*}
	By Minkowski's triangle inequality for the exponent $\frac p 2 \ge 1$,
	\begin{multline*}  
	\E\Ll[\langle M \rangle_t^{\frac p 2}\Rr]^\frac 2 p \\
	\le \int_{\R\times \T^d} \E \Ll[ \Ll( \int_{(\R \times \T^d)^{n-1}} \ph(z_1,\ldots,z_n) \, \1_{\{t_1 < \cdots < t_n < t\}} \, \xi(\d z_1) \, \cdots \, \xi(\d z_{n-1})  \Rr)^p  \Rr]^{\frac 2 p} \, \d z_n.
	\end{multline*}
	By symmetrizing $\varphi$ as in \eqref{e.symmetrize}, the induction hypothesis and \eqref{e.isometry}, we infer that
	\begin{equation*}  
	\E\Ll[\langle M \rangle_t^{\frac p 2}\Rr]^\frac 2 p 
	\le C  \int_{\R\times \T^d}  \int_{(\R \times \T^d)^{n-1}} \ph^2(z_1,\ldots,z_n) \, \1_{\{t_1 < \cdots < t_n < t\}} \, \d z_1 \, \cdots \, \d z_{n-1}   \, \d z_n.
	\end{equation*}
	The conclusion then follows from the Lemma~\ref{l.bdg}, used with $t = +\infty$.
\end{proof}
\begin{remark}  
	As hinted at in the introduction, certain Ising-type Markov processes converge (or are expected to converge) to the $\Phi^4$ model (see \cite{Ising}). The proof of Nelson's estimate based on the BDG inequality is sufficiently robust to allow for a generalization to these Markov processes. Indeed, loosely speaking, a Markov process can be thought of as an evolution equation with a random forcing that is white in time. In more precise words, a Markov process comes with a martingale structure indexed by time, and the possibly surprisingly special role played by the time variable in the proof we presented above becomes very natural in this context. Using versions of It\^o's formula and the BDG inequality for processes with jumps (\cite[Appendix~C]{Ising}), one can show that \eqref{e.Hermite.int} still holds approximately (\cite[Proposition~5.3]{Ising}) and prove a version of Nelson's estimate (\cite[Lemma~4.1]{Ising}) by following essentially the same reasoning as above. 
\end{remark}

With Nelson's estimate, we are now ready to provide a simple criterion to check the main convergence result in Theorem \ref{th:main}. Since for most of the processes defined in \eqref{eq:processes}, their limits can be characterised explicitly without referring to the limiting procedure as $n \rightarrow +\infty$ (and in the cases when the limiting procedure is necessary, it is also obvious what the limit should be), we only give detailed characterisations of the limiting processes $\tau$'s themselves. Once all the properties of the limits are well understood, the convergence does not pose any further problem.

\begin{proposition} \label{pr:criterion}
Let $n \in \N$, and let $\tau : \R \to \Ss'(\T^d)$ be a random process in $\mcl H_{\le n}$ which is stationary in space, in the sense that for every $x \in \T^d$, 
\begin{equation}  
\label{e.stationary}
\mbox{the processes $(\tau(t,\cdot))_{t \in \R}$ and $(\tau(t,x+\cdot))_{t \in \R}$ have the same law. }
\end{equation}
Let $(\htau(t,\om))_{\om \in \Z^d}$ denote the Fourier coefficients of $\tau(t)$. 
If for some $t \in \R$, there exists $C < \infty$ and $\alpha \in \R$ such that for every $\om \in \Z^d$, 
	\begin{equation} \label{eq:main_bound1}
	\E \Ll[ |\htau(t,\om)|^{2} \Rr] \le C (1 + |\om|)^{-d-2\alpha}, 
	\end{equation}
	then for every $\be < \al$, we have $\tau(t) \in \Cc^{\be}(\T^{3})$, and moreover, 
	\begin{equation} \label{eq:criterion_fixed_time}
	\E \Ll[\| \tau(t) \|_{\Cc^{\beta}}^{p}\Rr] < +\infty. 
	\end{equation}
	If, in addition to \eqref{eq:main_bound1}, there exists $\lambda \in (0,1)$ such that
	\begin{equation} \label{eq:main_bound2}
	\E \Ll[|\htau(t,\om) - \htau(s,\om)|^{2}\Rr] \le C |t-s|^{\lambda} (1 + |\om|)^{-d-2 \alpha + 2 \lambda}
	\end{equation}
uniformly in $0 < |t-s| < 1$ and $\om \in \Z^{d}$, then for every $\beta < \alpha - \lambda$, we have $\tau \in C(\R, \Cc^{\beta}(\T^{3}))$, and moreover,
	\begin{equation} \label{eq:criterion_time_difference}
	\sup_{0 < |t-s| < 1}	\frac{\E \Ll[\|\tau(t) - \tau(s)\|_{\Cc^{\beta}}^{p}\Rr]}{|t-s|^{\frac{\lambda p}{2}}} < +\infty.
	\end{equation}
\end{proposition}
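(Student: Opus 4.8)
The plan is to derive both parts from the boundedness criterion (Proposition~\ref{p.estim.norm}), reducing the required high-moment bounds on the Littlewood--Paley blocks of $\tau$ to the second-moment hypotheses \eqref{eq:main_bound1}--\eqref{eq:main_bound2} via Nelson's estimate (Proposition~\ref{p.nelson}) and the space stationarity \eqref{e.stationary}. The starting observation is that stationarity forces the Fourier coefficients to be orthogonal: translating the field by $y \in \T^d$ multiplies $\htau(t,\om)$ by $e^{2i\pi \om \cdot y}$ without changing the law, whence $\E[\htau(t,\om)\ov{\htau(t,\om')}] = 0$ for $\om \ne \om'$. Since $\dk$ is convolution with the Fourier multiplier $\chi_k$, this gives, for every $x$,
\begin{equation*}
\E\Ll[ |\dk \tau(t,x)|^2 \Rr] = \sum_{\om \in \Z^d} \chi_k(\om)^2 \, \E\Ll[ |\htau(t,\om)|^2 \Rr],
\end{equation*}
an expression independent of $x$, as it must be. Inserting \eqref{eq:main_bound1} and using that $\chi_k$ is supported on an annulus $|\om| \simeq 2^k$ carrying $\simeq 2^{dk}$ lattice points, on which $(1+|\om|)^{-d-2\al} \simeq 2^{-(d+2\al)k}$, I obtain $\E[|\dk\tau(t,x)|^2] \ls 2^{-2\al k}$.

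The core reduction is then as follows. Because $\dk$ is linear, $\dk\tau(t,x) \in \mcl H_{\le n}$ for each $x$, so Nelson's estimate upgrades the second moment to the $p$-th moment, $\E[|\dk\tau(t,x)|^p] \ls (2^{-2\al k})^{p/2} = 2^{-\al k p}$. Integrating over $x$, and using that the integrand is constant in $x$ by stationarity, yields $\E[\|\dk\tau(t)\|_{L^p}^p] \ls 2^{-\al k p}$. Given a target $\be < \al$, I would pick $p$ large enough that $\be < \al - \frac d p$ and apply Proposition~\ref{p.estim.norm} with regularity exponent $\al$: the factors $2^{\al k p}$ and $2^{-\al k p}$ cancel, leaving $\E[\|\tau(t)\|_{\Cc^\be}^p] \ls 1$, which is \eqref{eq:criterion_fixed_time}; lower moments follow by Jensen's inequality. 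Membership $\tau(t) \in \Cc^\be$ almost surely follows by running the same estimate at some $\be' \in (\be, \al - \frac d p)$ and invoking Remark~\ref{r.completion}.

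For the time-difference part I would apply the identical machinery to the increment $\tau(t) - \tau(s)$, which still lies in $\mcl H_{\le n}$, is stationary in space, and has Fourier coefficients $\htau(t,\om) - \htau(s,\om)$. Hypothesis \eqref{eq:main_bound2} and the same annulus count give $\E[|\dk(\tau(t)-\tau(s))(x)|^2] \ls |t-s|^\la \, 2^{-2(\al-\la)k}$, and Nelson then produces $\E[\|\dk(\tau(t)-\tau(s))\|_{L^p}^p] \ls |t-s|^{\la p/2} \, 2^{-(\al-\la)kp}$. Applying Proposition~\ref{p.estim.norm} now with regularity exponent $\al-\la$ (legitimate once $\be < \al - \la - \frac d p$) again cancels the dyadic factors and yields \eqref{eq:criterion_time_difference}. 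The continuity statement $\tau \in C(\R, \Cc^\be)$ then follows from the Kolmogorov continuity theorem: taking $p$ large makes the time exponent $\frac{\la p}{2}$ exceed $1$, providing a time-continuous $\Cc^\be$-valued modification for every $\be < \al - \la$.

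I expect the only genuinely delicate point to be the bookkeeping around the loss $\frac d p$ in Proposition~\ref{p.estim.norm}: since that criterion trades regularity for integrability, reaching the full ranges $\be < \al$ and $\be < \al - \la$ forces $p \to \infty$, and it is precisely the fact that Nelson's estimate holds for all $p$ (with constant $(p-1)^{n/2}$ on a fixed chaos) that keeps this harmless. The orthogonality of the Fourier modes is elementary but deserves to be stated with care, since it is what converts the \emph{single} second-moment hypotheses \eqref{eq:main_bound1}--\eqref{eq:main_bound2} into block-by-block bounds.
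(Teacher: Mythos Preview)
Your proposal is correct and follows essentially the same route as the paper: stationarity forces orthogonality of the Fourier coefficients, which converts the hypotheses into block-wise second-moment bounds $\E[|\dk\tau(t,x)|^2] \ls 2^{-2\al k}$ (resp.\ $|t-s|^\la 2^{-2(\al-\la)k}$); Nelson's estimate then upgrades these to $p$-th moments, and Proposition~\ref{p.estim.norm} concludes. The paper's proof is organised identically, only phrasing the orthogonality as $\E[\htau(s,\om)\htau(t,\om')]=0$ for $\om+\om'\neq 0$ (equivalent to yours via $\htau(t,-\om)=\overline{\htau(t,\om)}$) and leaving the Kolmogorov step implicit.
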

\begin{proof}
\emph{Step 1.} We first show that by the stationarity assumption \eqref{e.stationary}, for every $\om, \om' \in \Z^d$,
	\begin{equation} \label{eq:independence}
\quad \om + \om' \neq 0  \quad \implies \quad  \E \Ll[ \htau(s,\om) \htau(t,\om') \Rr] = 0. 
	\end{equation}
Indeed, we have, using a slightly informal integral-sign notation,
\begin{align*}  
\E \Ll[ \htau(s,\om) \htau(t,\om') \Rr] & = \iint_{(\T^d)^2} \E[\tau(s,x) \tau(t,y)] e^{-2\pi i (\omega \cdot x + \omega' \cdot y)} \, dx \, dy \\
& = \iint_{(\T^d)^2} \E[\tau(s,x) \tau(t,y)] e^{-2\pi i \Ll[(\omega+\omega') \cdot x + \omega' \cdot (y-x)\Rr]} \, dx \, dy.
\end{align*}
By the stationarity assumption, the expectation above is a function of $(y-x)$ only. Integrating in $y$ first and then in $x$, we therefore obtain \eqref{eq:independence}.

\smallskip

\emph{Step 2.} We now focus on the proof of \eqref{eq:criterion_time_difference}; the proof of \eqref{eq:criterion_fixed_time} is only simpler.

Let $\tau_{s,t} := \tau(t) - \tau(s)$. We have
	\begin{align*}
	(\delta_{k} \tau_{s,t})(x) = \sum_{\om} \chi_{k}(\om) \htau_{s,t}(\om) e^{2 \pi i \om \cdot x}. 
	\end{align*}
	We implicitly assume here that the processes under consideration are real-valued, and therefore that for every $\om \in \Z^d$,
	\begin{equation*}  
	\htau(t,-\om) = \overline{\htau(t,\om)}.
	\end{equation*}
	Since $\chi_{k}$ is an even function, we deduce that
	\begin{align*}
	\E \Ll[|(\delta_{k} \tau_{s,t})(x)|^{2}\Rr] = \sum_{\om, \om' \in \Z^{d}} \chi_{k}(\om) \chi_{k}(\om') \E \Ll[ \htau_{s,t}(\om) \htau_{s,t}(\om') \Rr] e^{2 \pi i (\om + \om') \cdot x}. 
	\end{align*}
	Using \eqref{eq:independence} and the bound \eqref{eq:main_bound2}, we get
	\begin{align*}
	\E \Ll[|(\delta_{k} \tau_{s,t})(x)|^{2}\Rr] = \sum_{\om \in \Z^{d}} |\chi_{k}(\om)|^{2} \E \Ll[ |\htau_{s,t}(\om)|^{2} \Rr] \ls |t-s|^{\lambda} 2^{-2k (\alpha-\lambda)}.
	\end{align*}
	 The above bound holds uniformly in $k \geq -1$, $0 < |t-s| < 1$ and $x \in \T^{d}$. Also, since $\tau$ belongs to $\mcl H_{\leq n}$, Nelson's estimate implies
	\begin{align*}
	\E\Ll[ \| \delta_{k} \tau_{s,t} \|_{L^{p}}^{p}\Rr] & \le \sup_{x \in \T^d} \E \Ll[ \Ll| (\de_k \tau_{s,t})(x) \Rr|^p\Rr]  \\ 
	& \ls \sup_{x \in \T^{d}} \Ll( \E\Ll[ |(\delta_{k} \tau_{s,t})(x)|^{2} \Rr]\Rr)^{\frac{p}{2}} \\
	& \ls |t-s|^{\frac{\lambda p}{2}} 2^{-kp (\alpha-\lambda)}. 
	\end{align*}
	By Proposition \ref{p.estim.norm}, we deduce that for each $\beta < \alpha - \lambda - \frac{d}{p}$, 
	\begin{align*}
	\E \Ll[\|\tau_{s,t}\|_{\Cc^{\beta}}^{p}\Rr] \ls |t-s|^{\frac{\lambda p}{2}}, 
	\end{align*}
	uniformly over all $0 < |t-s| < 1$. 
\end{proof}

\section{Construction of the diagrams} \label{sec:bounds}

We are now ready to construct the diagrams listed in Table \ref{t:diag}, and prove the relevant bounds appearing in Theorem \ref{th:main}. We will focus on the bound \eqref{e.bound1} for fixed times, and will only briefly discuss in the next section how the continuity in time follows from there. We also omit the detailed proof of the convergence \eqref{e.conv}, since once the bounds \eqref{e.bound1} for the limit diagrams are established, the convergence of approximations follows in essentially the same way. Since all our processes $\tau$ belong to $\mcl H_{\leq 5}$ and are stationary both in space and time, we can invoke Proposition \ref{pr:criterion} and 
reduce the proof of  \eqref{e.bound1}  to showing the second moment bound \eqref{eq:main_bound1} for each $\tau$.

\smallskip
The derivation of these bounds involves the estimation of several nested integrals and sums. We use a graphical notation to represent these operations. This  has the advantage of making the manipulations with potentially very long expressions shorter and more transparent. Naturally, the price to pay is to get used to the notation. One of the aims of these notes is to convince the reader that this investment is worth their while. 

The graphical notation is heavily inspired by the treatment of the stochastic terms in \cite[Section~10]{Martin1}. One  difference is that there the calculations are performed in ``real space'', while we prefer to work with the spatial Fourier transform, and to keep the time variable fixed. It turns out that despite this change, the graphs we encounter in our approach are very similar to the ones in  \cite[Section~10]{Martin1} --- only the interpretation changes slightly. Another  difference is  that we work with resonant parts of products, while Hairer considers increments of processes. We comment on this difference below (see also \cite[Section~5]{MartinKPZ} for an earlier graphical approach to bounding stochastic quantities which are represented  using the spatial Fourier transform).

\smallskip

The presentation is separated into two parts. We first show how to represent the various processes as iterated stochastic integrals, and then derive the bounds on these.

\subsection{Iterated integral representation }\label{s:itint1}

%
%
%
%

We start by showing how all of the stochastic terms $\tau$ in Table~\ref{t:diag} can be represented as sums of iterated stochastic integrals. As stated above, each of the terms $\tau$ is an element of the inhomogeneous Wiener chaos $\mcl H_{\leq 5}$, and this sum yields the explicit decomposition into its components in the homogeneous Wiener chaoses $\mcl H_0, \ldots, \mcl H_5$. This representation as iterated integrals reduces the proof of the required moment bounds to an application of the isometry property \eqref{e.isometry}. As will be shown below, this representation also makes the choice of infinite renormalisation constants transparent.

\smallskip

\textbf{Case $\tau = \<1>$.}
We take the simplest process $\tau = \<1>$, the solution to the stochastic heat equation \eqref{SHE}, as the starting point of our discussion. For each $t \in \R$ and $\om \in \Z^3$, we can write
\begin{align}\label{stoch-representation}
\hat{\<1>}(t,\om) = \int_{u = -\infty}^t \hP_{t-u}(\om)\, \d W(u,\om) , 
\end{align}
where $(W( \cdot, \om))_\om$ is the family of complex valued Brownian motions introduced in Section~\ref{sec:Gaussian}, and, for $t \ge 0$, $\hP_{t}$ is the Fourier transform of the heat kernel for $P_t = e^{t(\Delta - 1)}$, that is,
\begin{equation} \label{eq:ft_heat_1}
\hP_{t}(\om) = e^{- t (1 + 4\pi^2|\om|^{2})} = e^{- t \scal{\om}^{2}}, 
\end{equation}
where we set 
\begin{equation*}
\scal{\om} := \sqrt{1 + 4\pi^2|\om|^{2}}
\end{equation*}
for concision.  It is convenient to extend $P_t$ and $\hat P_t$ to every time $t \in \R$, by setting
\begin{equation}  
\label{e.zeropt}
\text{for every } t < 0, \qquad P_t \equiv 0 \quad \text{and} \quad \hP_t \equiv 0,
\end{equation}
so that \eqref{stoch-representation} can be rewritten as
\begin{equation*}  
\hat{\<1>}(t,\om) = \int_{u \in \R} \hP_{t-u}(\om)\, \d W(u,\om).
\end{equation*}
In future expressions of integrals against $\d W(u,\om)$, we always understand that the variable $\om$ is fixed, and that the variable $u$ is the variable of integration. For instance, we simply write
\begin{equation}  
\label{e.stoch-representation-bis}
\hat{\<1>}(t,\om) = \int_{\R} \hP_{t-u}(\om)\, \d W(u,\om).
\end{equation}
The graphical version of \eqref{stoch-representation} or \eqref{e.stoch-representation-bis} is
\begin{equation}\label{qw2}
\hat{\<1>}(t,\om) = \,
\begin{tikzpicture}[scale=0.7,baseline=0.2cm]
\node at (0,0) [root] (below){};
\node at (0,1) [circ] (above) {};
\node at (0,-0.4) [] {\scriptsize $(t,\om)$}; 
\draw[kernel] (below) to (above); 
\end{tikzpicture}
\ .
\end{equation}
 Here the root \tikz[baseline=-3] \node [root] {}; represents the pair $(t,\om)$, i.e., the time and frequency at which we seek to evaluate $\hat{\<1>}$.
The leaf  \tikz\node [circ] {}; represents an instance of the noise $dW(u,\om)$, and the line connecting them is the kernel $\hP_{t-u}$. The time variable $u$ associated to the node \tikz\node [circ] {}; is integrated out.

\smallskip

\textbf{Case $\tau = \<2>$.}
We now proceed to represent the process $\tau = \<2>$, the limit of  $\<2>_n  := (\<1>_n)^2 - \cc_n$. We start with the product~$\<1>_n^2$, writing
\begin{align}
\notag
\hat { {\<1>}_n^2} (t,\om)& = \sum_{\substack{\om_1 +\om_2 = \om\\ |\om_i| \leq n}}  \hat{\<1>}(t,\om_1) \hat{\<1>}(t,\om_2) \\
\notag
&= \sum_{\substack{\om_1 +\om_2 = \om\\ |\om_i| \leq n}}  \Big(  \int_{-\infty}^t \hP_{t-u_1}(\om_1)\, \d W(u_1,\om_1) \Big) \Big(  \int_{-\infty}^t \hP_{t-u_2}(\om_2)\, \d W(u_2,\om_2) \Big) \\
\notag
& = \sum_{\substack{\om_1 +\om_2 = \om\\ |\om_i| \leq n}}  \Big( 2   \int_{-\infty}^t  \Big[\int_{-\infty}^{u_1} \hP_{t-u_1}(\om_1) \hP_{t-u_2}(\om_2)\, \d W(u_2,\om_2) \Big]\, \d W(u_1,\om_1)  \\
\label{e:ItInt1}
& \qquad + \1_{\{\om_1 = - \om_2\}}  \int_{-\infty}^{t} \hP_{t-u}(\om_1)\hP_{t-u}(\om_2) \, \d u  \Big),
\end{align}
where the last equality follows from It\^o's formula.  The last term on the right side vanishes for $\om \neq 0$, because in this case the conditions $\om_1 +\om_2 = \om$ and $\om_1 = -\om_2$ are incompatible. For $\om  = 0$, the sum of these terms can be rewritten as
\begin{align*}
\sum_{|\om_1| \leq n}  \int_{-\infty}^{t} |\hP_{t-u}(\om_1)|^2  \, \d u  = \sum_{|\om_1| \leq n}  \frac{1}{2 \scal{\om_1}^{2}}.
\end{align*}
This is precisely the term $\cc_n := \E \Ll[ (\<1>_n(t))^2 \Rr] $, which is of the order of $n$ as $n$ goes to infinity, and is removed in the renormalisation 
procedure. Below we will show that we can pass to the limit $n \to \infty$  in the first term on the right side of \eqref{e:ItInt1}. We denote the limit by $\hat{\<2>}(t,\om)$.  It is instructive to translate the expressions back into ``real space'', and to check that in the notation  introduced in Section~\ref{sec:Gaussian}, we have 
\begin{align}
\label{aaa1}
\hat{\<2>}(t,\om)&=  \sum_{\om_1 +\om_2 = \om} 2 \Big(  \int_{-\infty}^t  \Big[\int_{-\infty}^{u_1} \hP_{t-u_1}(\om_1) \hP_{t-u_2}(\om_2)\, \d W(u_2,\om_2) \Big]\, \d W(u_1,\om_1) \Big)\\
\notag
 & = \int_{(\R \times \T^3)^2}     \int_{\T^3}P_{t-u_1}(y-x_1) P_{t-u_2}(y-x_2) e^{-i 2 \pi \om \cdot y}   \, \d y  \, \xi(\d u_1, \d x_1) \xi(\d u_2, \d x_2),
\end{align}
where we identify the operator $P_t$ with its kernel, which we  interpret as being null for $t\leq 0$. This expression shows in particular that $\hat{\<2>}(t,\om)$  is an element of the homogeneous Wiener chaos $\mcl {H}_2$ as defined in Section~\ref{sec:Gaussian}.

\smallskip

By the definition of iterated stochastic integrals, we may rewrite the identity in \eqref{e:ItInt1} as
\begin{multline*}
\hat { {\<1>}_n^2} (t,\om) =  \sum_{\substack{\om_1 +\om_2 = \om\\ |\om_i| \leq n}}  \Big(    \int_{ -\infty}^t  \int_{ -\infty}^{t} \hP_{t-u_1}(\om_1) \hP_{t-u_2}(\om_2)\, \d W(u_2,\om_2) \, \d W(u_1,\om_1)  \\
 + \mathbf{1}_{\{\om_1 = -\om_2\}}\int_{-\infty}^{t} \hP_{t-u}(\om_1) \hP_{t-u}(\om_2) \, \d u  \Big),
\end{multline*}
or, with our convention \eqref{e.zeropt},
\begin{multline}
\label{e:ItInt21bis}
\hat { {\<1>}_n^2} (t,\om) =  \sum_{\substack{\om_1 +\om_2 = \om\\ |\om_i| \leq n}}  \Big(    \int_{\R^2} \hP_{t-u_1}(\om_1) \hP_{t-u_2}(\om_2)\, \d W(u_2,\om_2) \, \d W(u_1,\om_1)  \\
 + \mathbf{1}_{\{\om_1 = -\om_2\}}  \int_{-\infty}^{t} \hP_{t-u}(\om_1) \hP_{t-u}(\om_2) \, \d u  \Big).
\end{multline}
As the reader can see, this expression is already quite bulky --- and much worse is to come. This motivates to introduce a graphical notation 
which encodes these expressions in a much more transparent way, and which we will be able to manipulate directly. At this stage, we disregard the truncation, 
 and perform all calculations in the formal limit $n = \infty$. The expression~\eqref{e:ItInt21bis} then becomes 
\begin{align*}
\widehat{\<1>^{2}}(t,\om) = \phantom{1}
\begin{tikzpicture}[scale=0.6,baseline=-0.5cm]
\node at (-1,0.3) [circ] (left) {}; 
\node at (0,-1) [root] (middle){};
\node at (1,0.3) [circ] (right) {};
\node at (0,-1.5) [] {\scriptsize $(t,\om)$}; 
\draw[kernel] (middle) to (left); 
\draw[kernel] (middle) to (right); 
\end{tikzpicture}
\phantom{1} + \phantom{1}
\begin{tikzpicture}[scale=0.6,baseline=-0.5cm]
\node at (0,-1) [root] (below) {}; 
\node at (0,0.6) [dot] (above) {};
\node at (0,-1.5) [] {\scriptsize $(t,\om)$}; 
\draw[kernel, bend left = 60] (below) to (above); 
\draw[kernel, bend right = 60] (below) to (above); 
\end{tikzpicture}.
\end{align*}
As before the root \tikz[baseline=-3] \node [root] {}; of the graph represents  the pair $(t,\om)$, and each of the leaves  \tikz\node [circ] {}; represents one occurrence of the white noise and carries a pair $(u_i, \om_i)$ itself. The kernel $\hat P_t$ is represented by the arrow connecting the nodes evaluated at time $(t-u_i)$, and the arrow points towards the node whose time variable is ``earlier''. Then all time variables $u_i$ except for the one $t$ at the node are integrated out and the $\om_i$ are summed over, subject to $\om_1 + \om_2 = \om$. We will see below that this last rule corresponds to Kirchhoff's law that the  ``ingoing'' variable $\om$ must coincide with the sum of all ``outgoing'' $\om_i$'s. The second graph on the right side is obtained by ``contracting'' the two nodes of 
the first graph.  In this second graph, we may associate a frequency $\omega_1$ to the left arrow, and a frequency $\omega_2$ to the right arrow. Kirchhoff's law for the bottom node then imposes $\omega = \omega_1 + \omega_2$, and for the top node, $\omega_1 + \omega_2 = 0$, and we recover that this term is zero unless $\omega = 0$. This contracted graph is removed in the renormalisation procedure, so that
\begin{align}\label{qw3}
\hat{\<2>}(t,\om) = \phantom{1}
\begin{tikzpicture}[scale=0.6,baseline=-0.5cm]
\node at (-1,0) [circ] (left) {}; 
\node at (0,-1) [root] (middle){};
\node at (1,0) [circ] (right) {};
\node at (0,-1.5) [] {\scriptsize $(t,\om)$}; 
\draw[kernel] (middle) to (left); 
\draw[kernel] (middle) to (right); 
\end{tikzpicture}\; . 
\end{align}

\smallskip
\textbf{Case $\tau = \<30>$.}
We now discuss the next term $\<30>$, which as announced arises as the limit of 
$\<30>_n  := I \Ll( (\<1>_n)^3 - 3 \cc_n \<1> \Rr)$. As above in \eqref{e:ItInt1} (see also \eqref{e.Hermite.int} with $n = 3$) we can use It\^o's formula to obtain an iterated integral representation for $\hat{\<1>_n^3}$,
which takes the form 
\begin{align}
\notag
\hat { {\<1>}_n^3} (t,\om)
& = \sum_{\substack{\om_1 +\om_2 + \om_3 = \om\\ |\om_i| \leq n}} 6   \int_{-\infty}^t \int_{-\infty}^{u_1}\int_{-\infty}^{u_2}  \hP_{t-u_1}(\om_1) \hP_{t-u_2}(\om_2) \hP_{t-u_3}(\om_3) \\
\notag
& \qquad \qquad \qquad \qquad \, \d W(u_3,\om_3)\, \d W(u_2,\om_2)\, \d W(u_1,\om_1)  \\
\label{e:ItInt2}
& \qquad + 3 \cc_n \hat{\<1>}_n(t,\om).
\end{align}
We had already seen above that $\cc_n$ diverges as $n$ goes to $\infty$, which motivates to remove the second term in the renormalisation procedure. The reason why we choose to work with the integrated object $\<30>$ rather than $\<3>$ is that (as we will see below) although the latter can be defined as a space time distribution, it cannot be evaluated at any fixed $t$. (This is similar to temporal white noise which can also only be interpreted when tested against a function of time and space, but never pointwise in $t$). In fact, this is an instance of the well-known fact that Wick powers of order $\geq 3$ over the three dimensional Gaussian free field do not exist (since the covariance function would not be integrable; see e.g.\ \cite[Section~2.7]{e-jentzen-shen}), and for readers familiar with this fact, it may be surprising that Wick powers up to order $4$ can be constructed as space-time objects. Using a similar graphical notation to the one used for $\<2>$, we arrive at
\begin{align*}
	\hat{\<30>}(t,\om)  = 
	\begin{tikzpicture}[scale=0.6,baseline=-0.7cm]
	\node at (0,-0.7) [dot] (middle) {}; 
	\node at (0,0.4) [circ] (up) {};
	\node at (-0.7,0) [circ] (left) {}; 
	\node at (0.7,0) [circ] (right) {}; 
	\node at (0,-1.8) [root] (below) {}; 
	\node at (0,-2.3) [] {\scriptsize $(t,\om)$}; 
	\draw[kernel] (middle) to (left); 
	\draw[kernel] (middle) to (up); 
	\draw[kernel] (middle) to (right); 
	\draw[kernel] (below) to (middle); 
	\end{tikzpicture}. 
\end{align*}

\smallskip
\textbf{Case $\tau = \<22p>$.}
At this point we want to start to think more systematically about the derivation of the diagrammatic expressions and their interpretation, and we illustrate this with the diagram $\<22p>$. This particular expansion follows from 
an iterated application of It\^o's formula, see \cite[Propositions 1.1.2 and 1.1.3]{Nualart} and \cite[Section~10]{Martin1}.

For the moment we ignore the additional complexity introduced by the resonant products $\pe$ in \eqref{eq:processes}, and give a graphical representation for $\<22>$ defined as in \eqref{eq:processes}
with $\pe$ replaced by the usual product.  To begin with, we give the graphical representation of this symbol without taking into account 
the renormalisation procedure,  i.e. we work with the random function $I(\<1>_n^2) \<1>_n^2$. 
This random function takes values in $\mcl H_{\leq 4}$, with components in $\mcl H_{4}$, $\mcl H_{2}$ and $\mcl H_{0}$.
The component in the highest Wiener chaos $\mcl H_{4}$ is represented by the graph
\begin{align}\label{GG1}
\phantom{1} \begin{tikzpicture}[scale=0.7,baseline=-0.3cm]
\node at (0,0) [dot] (middle) {}; 
\node at (-0.6,0.8) [circ] (aboveleft) {}; 
\node at (0.6,0.8) [circ] (aboveright) {}; 
\node at (0,-1.2) [root] (below){};
\node at (-0.6,-0.3) [circ] (left) {}; 
\node at (0.6,-0.3) [circ] (right) {}; 
\node at (0,-1.7) () {\scriptsize $(t,\om)$}; 
\draw[kernel] (below) to (left); 
\draw[kernel] (below) to (right); 
\draw[kernel] (below) to (middle); 
\draw[kernel] (middle) to (aboveleft); 
\draw[kernel] (middle) to (aboveright); 
\end{tikzpicture}\;,
\end{align}
i.e. precisely the graph we use as a symbol to represent this object. This graph can be interpreted as random variable either in ``real space'' coordinates or in ``Fourier coordinates''. Both interpretations are equivalent and the former is closer in spirit to  \cite[Section~10]{Martin1} and also \eqref{aaa1} above, while the latter is closer to the spirit of the present  notes. Here we present both interpretations, starting with the ``real space'' interpretation because  it is slightly easier to explain. The Fourier interpretation then follows by turning multiplication into convolution in the space coordinates:  For the ``real space'' interpretation we assign a space-time point to each of the vertices of this graph (e.g.\ $(u_1,x_1), \ldots, (u_4,x_4)$ to the four leaves, $(u_5,x_5)$ to the internal vertex, and $(t,x_6)$ to the root), an instance of the heat kernel $P$ evaluated at the difference of the variables associated to the adjacent vertices and the arrow pointing towards the ``earlier'' time variable to each  of the arrows  (e.g.\ $P_{u_5-u_1}(x_5-x_1)$ to the upper right arrow), and multiply all of these kernels. Finally, the variable corresponding to the internal variable $(u_5,x_5)$ is integrated out over space-time, the variables $(u_i,x_i)$ for the leaves are integrated against the white noise $\xi$ and we take the spatial Fourier transform with respect to the variable $x_6$ at the root, yielding the expression\footnote{Actually, for finite $n$ the heat kernels connecting to the leaves, i.e. $P_{u_5-u_1}$, $P_{u_5 - u_2}$,
$P_{t-u_3}$ and $P_{t-u_4}$ (but not $P_{t-u_5}$)
should be replaced by the regularised heat kernel $(t,x) \mapsto \sum_{|\om| \leq n} \hP_t(\om) e^{i 2 \pi \om \cdot x}$.  Similarly, in \eqref{aaa6} and after 
we will leave implicit the constraint $|\om_1|, \ldots, |\om_4| \leq n$. Here and below we drop the regularisations for convenience.}
\begin{align}
\notag
&\int_{(\R \times \T^3)^4}  \Big(  \int_{\T^3}  \d x_6 \int_{\R \times \T^3}  \d u_5  \, \d x_5 P_{u_5-u_1}(x_5-x_1) P_{u_5-u_2}(x_5-x_2) \\
\notag
& \qquad \times  P_{t-u_5}(x_6-x_5)P_{t-u_3}(x_6 - x_3) P_{t-u_4}(x_6 - x_4)  e^{-i 2 \pi \om \cdot x_6} \Big) \\
& \qquad \qquad \qquad \qquad \xi(\d u_1, \d x_1) \xi(\d u_2, \d x_2)\xi(\d u_3, \d x_3) \xi(\d u_4, \d x_4).
\label{aaa5}
\end{align}
%
Translating the previous expression and interpretation into Fourier variables can be done as follows: each of the vertices is equipped with a time-frequency variable in $\R \times \Z^3$, say $(u_1,\om_1), \ldots, (u_4, \om_4)$ for the leaves, $(u_5, \om_5)$ for the internal vertex, and $(t,\om)$ for the root. The formula \eqref{aaa5} then becomes
\begin{align}
\notag
& \sum_{\substack{\om_1, \ldots, \om_5 \in \Z^3\\ \om_1 + \om_2 = \om_5 \\ \om_3 + \om_4 + \om_5 = \om}} \int_{\R^4}  \Big(  \int_{\R } \; \d u_5  \hP_{u_5-u_1}(\om_1) \hP_{u_5-u_2}(\om_2) 
   \hP_{t-u_5}(\om_5) \hP_{t-u_3}(\om_3) \hP_{t-u_4}(\om_4) \Big) \\
& \qquad \qquad \qquad \qquad d W(u_1, \om_1)\, \d W(u_2, \om_2)\, \d W(u_3, \om_3)\, \d W(u_4, \om_4),
\label{aaa6}
\end{align}
that is, each arrow now corresponds to an instance of $\hP$, where the time variables stay the same as before, but the difference of the space variables is replaced by the frequency variable corresponding to the top of the arrow. The fact that the product turns into convolution under the Fourier transform is reflected in  the ``Kirchhoff rule''  that at each internal vertex, the sum of ``incoming'' frequency variables equals the sum of ``outgoing'' frequency variables. The same rule applies at the root, with the understanding that $\omega$ is an ``ingoing'' frequency.

The terms in the lower order Wiener chaoses $\mcl H_{2}$ and  $\mcl H_{0}$ arise from It\^o's formula, in the same spirit to our discussion of $\<2>$ above. For $\mcl H_{2}$ we get
\begin{align}
\phantom{1} \begin{tikzpicture}[scale=0.7,baseline=-0.3cm]
\node at (0,0) [dot] (middle) {}; 
\node at (0,0.8) [dot] (above) {}; 
\node at (0,-1.2) [root] (below){};
\node at (-0.8,-0.6) [circ] (left) {}; 
\node at (0.8,-0.6) [circ] (right) {}; 
\node at (0,-1.8) () {\scriptsize $(t,\om)$}; 
\draw[kernel] (below) to (left); 
\draw[kernel] (below) to (right); 
\draw[kernel] (below) to (middle); 
\draw[kernel, bend left = 60] (middle) to (above); 
\draw[kernel, bend right = 60] (middle) to (above); 
\end{tikzpicture}\;
 +  
\phantom{1} \begin{tikzpicture}[scale=0.7,baseline=-0.3cm]
\node at (0,0) [dot] (middle) {}; 
\node at (-0.8,0.6) [circ] (aboveleft) {}; 
\node at (0.8,0.6) [circ] (aboveright) {}; 
\node at (0,-1.2) [root] (below){};
\node at (-0.8,-0.6) [dot] (left) {}; 
\node at (0,-1.8) () {\scriptsize $(t,\om)$}; 
\draw[kernel, bend left = 60] (below) to (left); 
\draw[kernel, bend right = 60] (below) to (left); 
\draw[kernel] (below) to (middle); 
\draw[kernel] (middle) to (aboveleft); 
\draw[kernel] (middle) to (aboveright); 
\end{tikzpicture}\;
+ 4  \times
\begin{tikzpicture}[scale=0.7,baseline=-0.3cm]
\node at (0,0) [dot] (middle) {}; 
\node at (-0.8,0.6) [circ] (aboveleft) {}; 
\node at (-0.8,-0.6) [circ] (left) {}; 
\node at (0.8,-0.6) [dot] (right) {}; 
\node at (0,-1.2) [root] (below) {}; 
\node at (0,-1.8) () {\scriptsize $(t,\om)$}; 
\draw[kernel] (middle) to (aboveleft); 
\draw[kernel] (middle) to (right); 
\draw[kernel] (below) to (left); 
\draw[kernel] (below) to (right); 
\draw[kernel] (below) to (middle); 
\end{tikzpicture}
\notag
\end{align}
These are precisely the graphs that can be obtained by picking a pair of leaves in \eqref{GG1} and ``gluing'' them together. The pre-factor ``$4$'' is combinatorial and corresponds to the fact that there are four different ways of picking one vertex from the ``top level'' and one from the ``bottom level'' of the graph, each of which giving rise to the same iterated stochastic integral. 
The interpretation for these graphs is the same as before. For instance, for the first of these graphs corresponds to the expression
\begin{align}
\notag
&  \sum_{\substack{\om_1, \ldots, \om_5 \in \Z^3\\ \om_1 + \om_2 = \om_5 \\ \om_3 + \om_4 + \om_5 = \om\\
\om_1 + \om_2 =0}} \int_{\R^2}  \Big(  \int_{\R^2 } \;  \d u_5 \, \d u_1 \hP_{u_5-u_1}(\om_1) \hP_{u_5-u_1}(\om_2) 
   \hP_{t-u_5}(\om_5) \hP_{t-u_3}(\om_3) \hP_{t-u_4}(\om_4) \Big) \\
   \notag
& \qquad \qquad \qquad \d W(u_3, \om_3)\, \d W(u_4, \om_4) \\
\notag
&= 
\cc_n \sum_{\substack{\om_3, \om_4 \in \Z^3 \\ \om_3 + \om_4  = \om}} \int_{\R^2}    \hP_{t-u_3}(\om_3) \hP_{t-u_4}(\om_4)   \, \d W(u_3, \om_3)\, \d W(u_4, \om_4) \\
\label{aa9}
& = \cc_n \hat{\<2>}(t,\om),
\end{align}
which arises from \eqref{aaa6}  by replacing the white noises $\d W(du_1, \om_1)\, \d W(du_2, \om_2)$ from the vertices which are ``glued together'' by $\delta_0(u_1 - u_2) \mathbf{1}_{\om_1 = -\om_2}$. In the same way we get the identity
\begin{align}
\label{ab1}
\begin{tikzpicture}[scale=0.7,baseline=-0.3cm]
\node at (0,0) [dot] (middle) {}; 
\node at (-0.8,0.6) [circ] (aboveleft) {}; 
\node at (0.8,0.6) [circ] (aboveright) {}; 
\node at (0,-1.2) [root] (below){};
\node at (-0.8,-0.6) [dot] (left) {}; 
\node at (0,-1.8) () {\tiny $(t,\om)$}; 
\draw[kernel, bend left = 60] (below) to (left); 
\draw[kernel, bend right = 60] (below) to (left); 
\draw[kernel] (below) to (middle); 
\draw[kernel] (middle) to (aboveleft); 
\draw[kernel] (middle) to (aboveright); 
\end{tikzpicture}\; 
= \cc_n  \times \hat{I( \<2>)} (t,\om).
\end{align}
 Finally, the term in the zero-th Wiener chaos (that is, the constant) is given by the only graph that can be obtained from two 
contractions, namely 
\begin{align}
\label{ab2}
2 \times  \begin{tikzpicture}[scale=0.7,baseline=-0.2cm]
\node at (0,0.8) [dot] (above) {}; 
\node at (0,-1.2) [root] (below) {}; 
\node at (-1,-0.2) [dot] (left) {}; 
\node at (1,-0.2) [dot] (right) {}; 
\node at (0,-1.8) () {\tiny $(t,\om)$}; 
\draw[kernel] (below) to (above); 
\draw[kernel] (below) to (left); 
\draw[kernel] (below) to (right); 
\draw[kernel] (above) to (left); 
\draw[kernel] (above) to (right); 
\end{tikzpicture} \;.
\end{align}
We now move on to discussing the renormalisation of these terms. As the reader can verify, working with 
$I ( \<2>) \<2> $ instead of  $I( \<1>^2) \<1>^2$ (i.e. performing the Wick renormalisation of the product $\<1>^2$ as above) corresponds exactly to removing the graphs in \eqref{aa9} and \eqref{ab1}. The logarithmic sub-divergence corresponding to $\cc_n'$ arises in \eqref{ab2}. Indeed, evaluating this expression  yields
\begin{align}
\notag
&   \sum_{\substack{\om_1, \ldots, \om_5 \in \Z^3\\ \om_1 + \om_2 = \om_5 \\ \om_3 + \om_4 + \om_5 = \om\\
\notag
\om_1 + \om_3 =0 \\ \om_2 + \om_4 =0}}    \int_{\R^3 } \; \d u_5 \, \d u_1 \, \d u_2  \,\hP_{u_5-u_1}(\om_1) \hP_{u_5-u_2}(\om_2) 
   \hP_{t-u_5}(\om_5) \hP_{t-u_1}(\om_3) \hP_{t-u_2}(\om_4) \\
   \notag
   = &\mathbf{1}_{ \{\om =0 \}}  \sum_{\substack{\om_1, \om_2, \om_5 \in \Z^3\\ \om_1 + \om_2 = \om_5}}  \int_{\R^3 } \;  \d u_5 \, \d u_1 \, \d u_2  \,\hP_{u_5-u_1}(\om_1) \hP_{u_5-u_2}(\om_2) \\
   \notag
   & \qquad \qquad \qquad \qquad \qquad  \times
   \hP_{t-u_5}(\om_5) \hP_{t-u_1}(-\om_1) \hP_{t-u_2}(-\om_2)\\
   \notag
 = &\mathbf{1}_{\{ \om =0 \}}  \sum_{\substack{\om_1, \om_2, \om_5 \in \Z^3\\ \om_1 + \om_2 = \om_5}}  \int_{\R } \;  \d u_5 
   \hP_{t-u_5}(\om_5)  \frac{e^{-|t-u_5| \scal{\om_1}^{2} }}{2 \scal{\om_1}^{2}}  \frac{e^{-|t-u_5| \scal{\om_2}^{2} }}{2 \scal{\om_2}^{2}}\\
\label{div.log}
   =& \mathbf{1}_{\{ \om =0 \}} \frac14 \sum_{\substack{\om_1, \om_2, \om_5 \in \Z^3\\ \om_1 + \om_2 = \om_5}} \frac{1}{  \langle \om_1 \rangle^2 }  \frac{1}{  \langle \om_2 \rangle^2 }   \frac{1}{ \langle \om_1 \rangle^2 + \langle \om_2 \rangle^2 + \langle \om_5 \rangle^2 }    ,
\end{align} 
In the first identity above, we have used the fact that the four restrictions on the~$\om_i$ are incompatible unless $\om=0$. The fact that the expression vanishes for non-zero $\om$ could also be deduced from the fact that \eqref{ab2} represents the expectation of $I(\<1>_n^2) \<1>_n^2$, which is constant in space by stationarity.
 In the second identity, we have made use of 
the symmetry of $\hat{P}$ in $\omega$ and of the fact that
$ \int_{\R}  \hP_{u_5-u_1}(\om_1) \hP_{t-u_1}(\om_1) \, \d u_1 = \frac{e^{-|t-u_5| \scal{\om_1}^{2} }}{2 \scal{\om_1}^{2}}$, as well as the corresponding identity for~$u_2$.

The sum in \eqref{div.log}, with cutoffs $|\omega_i| \le n$, diverges logarithmically as $n$ tends to infinity. It is therefore necessary to remove the diagram in \eqref{ab2} in the renormalisation procedure. We arrive at the expression 
$I(\<2>_n) \<2>_n - 2 \cc_n'$,\footnote{In fact, the sum represented by the diagram \eqref{ab2} does not coincide exactly with the constant $\cc_n'$ as defined in \eqref{e.def.ccn} because the latter is defined as the expectation of the resonant product $\E \Ll[ I \Ll( \<2>_n \Rr) \pe \<2>_n (t) \Rr]$ while the former coincides with 
$\E \Ll[ I \Ll( \<2>_n \Rr)  \<2>_n (t) \Rr]$. However, as the reader can check, the difference between these constants remains bounded as $n$ tends to infinity. } which is already very close to the definition of $\<22p>_n$ in \eqref{eq:processes}.

It only remains to re-introduce the resonant product $\pe$ in our construction, in place of the full product.  We start by briefly explaining why this is actually 
necessary, going back to the discussion of product estimates in Section~\ref{sec:pl}. In the solution theory of \eqref{e:eqX},
the term $\<22>$ plays the role of a product of $\<20> = I(\<2>)$ and $\<2>$. Now, as we have already discussed at length, $\<20>$ is a random function of 
class $\Cc^{1-}$, $\<2>$ is a random distribution of class $\Cc^{-1-}$, and products (more specifically the resonant part of products) 
are not well defined in this regularity class. (Of course, the purpose of the present article is to explain how to define these products as probabilistic limits 
of renormalised approximations). As we have just seen, a renormalisation procedure can be used to define the products. Yet, it will not improve the 
regularity of the resulting object (predicted in Table~\ref{t.para}), i.e.\ the product $\<20> \<2>$ will inherit the bad regularity $\Cc^{-1-}$ from  $\<2>$. It is however crucial, both in Hairer's theory of regularity structures and in the theory of 
paracontrolled distributions, to obtain a bound which reflects the ``good'' regularity of $\<20>$, i.e. we need to get a bound of regularity $(-1-) + (1-) = 0-$. In Hairer's theory, this is accomplished by working with ``increments'': there the fundamental object is 
\begin{align*}
(\<20>(y) - \<20>(x)) \<2>(y),
\end{align*}
which indeed behaves like $(y-x)^{0-}$ as $y \to x$. One key observation in \cite{Gubi}  was that the same effect can be obtained by 
working with the resonant product $\<20> \pe \<2>$, which is of class $\Cc^{0-}$.

After this short detour, it remains to incorporate the resonant product $\pe$ into the graphical notation. For this we recall from Section~\ref{sec:pl}
that for arbitrary $f,g$ (e.g.\ $\in \Cc^{\infty}$) we have
\begin{align*}
f \pe g(x) = \sum_{ |k -l | \leq 1} \delta_k  f(x) \delta_l g(x) = \sum_{\om_1, \om_2 \in \Z^3} e^{2i \pi (\om_1 + \om_2) \cdot x}\hat{f}(\om_1) \hat{g}(\om_2) \sum_{|k-l| \leq 1} \chi_{k}(\om_1) \chi_l(\om_2).
\end{align*}
For the last sum appearing in this expression, we have
\begin{align}\label{qu4}
 \sum_{|k-l| \leq 1} \chi_{k}(\om_1) \chi_l(\om_2) 
 \begin{cases}
 \in [0,1] \quad &\text{for all } \om_1, \om_2\\
 = 0 \quad &\text{if    } (|\om_1|> \frac{8}{3} \text{ or }  |\om_2| > \frac{8}{3}) \text{ and }   \frac{|\om_1|}{ |\om_2|} \notin [c,c^{-1}],
  \end{cases}
\end{align}
where $c= \frac{9}{64}$. Roughly speaking, and in agreement with the intuition in \eqref{e.Fourier.decomp}, this term acts as a smooth indicator function, which only selects pairs $(\om_1,\om_2)$ for which $|\om_1|$ and $|\om_2|$ are close to one another on the logarithmic scale. This intuition justifies the slightly abusive notation
\begin{align}\label{qu5}
\widehat{ f \pe g}(\om) = \sum_{\substack{\om_1 + \om_2 = \om\\ \om_1 \sim \om_2}} \hat{f}(\om_1) \hat{g}(\om_2) := \sum_{\om_1+ \om_2 = \om} \hat{f}(\om_1) \hat{g}(\om_2) \sum_{|k-l| \leq 1} \chi_{k}(\om_1) \chi_l(\om_2) .
\end{align}
We represent these ``restricted convolutions'' in the graphical notation by dotted lines. For example, when the product is replaced by $\pe$, the first graph \eqref{GG1} becomes
\begin{align*}
 \phantom{1} \begin{tikzpicture}[scale=0.7,baseline=-0.3cm]
\node at (0,0) [dot] (middle) {}; 
\node at (-0.6,0.8) [circ] (aboveleft) {}; 
\node at (0.6,0.8) [circ] (aboveright) {}; 
\node at (0,-1.2) [var] (below) {\tiny $=$}; 
\node at (-0.6,-0.3) [circ] (left) {}; 
\node at (0.6,-0.3) [circ] (right) {}; 
\node at (0,-1.7) () {\scriptsize $(t,\om)$}; 
\draw[kepsilon] (below) to (left); 
\draw[kepsilon] (below) to (right); 
\draw[kernel] (below) to (middle); 
\draw[kernel] (middle) to (aboveleft); 
\draw[kernel] (middle) to (aboveright); 
\end{tikzpicture} \; .
\end{align*}
The interpretation of this diagram is the same as in \eqref{aaa6}, with the only exception that the additional restriction $\{ \om_3 + \om_4 \sim \om_5 \}$ is enforced. The convention is that next to a node \begin{tikzpicture}\node at (0,0) [var]{\tiny $=$}  ; \end{tikzpicture},
the sum of the frequency variables corresponding to the dotted arrows is similar to the sum of frequency variables from the regular arrows.
Summarising all of this discussion, we finally arrive at the graphical expression
\begin{align}
\label{e.decomp.22p}
\hat{\<22p>}(t,\om) &= \phantom{1} \begin{tikzpicture}[scale=0.7,baseline=-0.3cm]
\node at (0,0) [dot] (middle) {}; 
\node at (-0.6,0.8) [circ] (aboveleft) {}; 
\node at (0.6,0.8) [circ] (aboveright) {}; 
\node at (0,-1.2) [var] (below) {\tiny $=$}; 
\node at (-0.6,-0.3) [circ] (left) {}; 
\node at (0.6,-0.3) [circ] (right) {}; 
\node at (0,-1.7) () {\scriptsize $(t,\om)$}; 
\draw[kepsilon] (below) to (left); 
\draw[kepsilon] (below) to (right); 
\draw[kernel] (below) to (middle); 
\draw[kernel] (middle) to (aboveleft); 
\draw[kernel] (middle) to (aboveright); 
\end{tikzpicture}
\phantom{1} + \phantom{1} 4 \times
\begin{tikzpicture}[scale=0.7,baseline=-0.3cm]
\node at (0,0) [dot] (middle) {}; 
\node at (-0.8,0.6) [circ] (aboveleft) {}; 
\node at (-0.8,-0.6) [circ] (left) {}; 
\node at (0.8,-0.6) [dot] (right) {}; 
\node at (0,-1.2) [var] (below) {\tiny $=$}; 
\node at (0,-1.7) () {\scriptsize $(t,\om)$}; 
\draw[kernel] (middle) to (aboveleft); 
\draw[kernel] (middle) to (right); 
\draw[kepsilon] (below) to (left); 
\draw[kepsilon] (below) to (right); 
\draw[kernel] (below) to (middle); 
\end{tikzpicture}
\phantom{1} . 
\end{align}

\smallskip

\textbf{Remaining two diagrams.}
The graphical representation/decomposition of its components in different Wiener chaoses for 
the remaining terms follows the same line of reasoning, and we omit the details. 
For $\tau = \<31p>$, we get
\begin{equation} \label{eq:expression_31first}
\hat{\<31p>}(t,\om) = \phantom{1}
\begin{tikzpicture}[scale=0.7,baseline=-0.3cm]
\node at (0,0) [dot] (middle) {}; 
\node at (0,1) [circ] (above) {};
\node at (-0.7,0.6) [circ] (left) {}; 
\node at (0.7,0.6) [circ] (right) {}; 
\node at (0,-1) [var] (below) {\tiny $=$}; 
\node at (0.8,-0.3) [circ] (middleright) {}; 
\node at (0,-1.4) [] {\tiny $(t,\om)$}; 
\draw[kernel] (middle) to (left); 
\draw[kernel] (middle) to (above); 
\draw[kernel] (middle) to (right); 
\draw[kernel] (below) to (middle); 
\draw[kepsilon] (below) to (middleright); 
\end{tikzpicture}
\phantom{1} + \phantom{1} 3 \times
\begin{tikzpicture}[scale=0.7,baseline=-0.3cm]
\node at (0,0) [dot] (middle) {}; 
\node at (-0.7,0.8) [circ] (aboveleft) {}; 
\node at (0.7,0.8) [circ] (aboveright) {}; 
\node at (0,-1.2) [var] (below) {\tiny $=$}; 
\node at (1,-0.6) [dot] (right) {}; 
\node at (0,-1.6) [] {\tiny $(t,\om)$}; 
\draw[kernel] (below) to (middle); 
\draw[kepsilon] (below) to (right); 
\draw[kernel] (middle) to (right); 
\draw[kernel] (middle) to (aboveleft); 
\draw[kernel] (middle) to (aboveright); 
\end{tikzpicture} \phantom{1}.
\end{equation}
We only mention that as above in the discussion for $\<22p>$, the Wick renormalisation (i.e. the 
fact that we work with $I(\<3>) \pe \<1>$ rather than $I(\<1>^3) \pe \<1>$) corresponds exactly
to removing the graphs in the second Wiener chaos which arise by contracting two of the three leaves
at the top of the graph. The logarithmic sub-divergence plays no role for this term.

Finally, for $\tau = \<32p>$, we have 
\begin{align*}
\hat{\<32p>}(t,\om) &= \phantom{1}
\begin{tikzpicture}[scale=0.7,baseline=0cm]
\node at (0,0) [dot] (middle) {}; 
\node at (0,1.2) [circ] (above) {}; 
\node at (-0.7,0.8) [circ] (aboveleft) {}; 
\node at (0.7,0.8) [circ] (aboveright) {}; 
\node at (0,-1) [var] (below) {\tiny $=$}; 
\node at (-0.7,-0.2) [circ] (left) {}; 
\node at (0.7,-0.2) [circ] (right) {}; 
\node at (0,-1.4) () {\tiny $(t,\om)$}; 
%
\draw[kepsilon] (below) to (left); 
\draw[kepsilon] (below) to (right); 
\draw[kernel] (below) to (middle); 
\draw[kernel] (middle) to (aboveleft); 
\draw[kernel] (middle) to (aboveright); 
\draw[kernel] (middle) to (above); 
\end{tikzpicture}
\phantom{1} + \phantom{1} 6 \times
\begin{tikzpicture}[scale=0.7,baseline=-0.2cm]
\node at (0,0) [dot] (middle) {}; 
\node at (-0.7,0.7) [circ] (aboveleft) {}; 
\node at (0.7,0.7) [circ] (aboveright) {}; 
\node at (-0.7,-0.5) [circ] (left) {}; 
\node at (0.7,-0.5) [dot] (right) {}; 
\node at (0,-1.2) [var] (below) {\tiny $=$}; 
\node at (0,-1.6) () {\tiny $(t,\om)$}; 
\draw[kernel] (middle) to (aboveleft); 
\draw[kernel] (middle) to (aboveright); 
\draw[kernel] (middle) to (right); 
\draw[kepsilon] (below) to (left); 
\draw[kepsilon] (below) to (right); 
\draw[kernel] (below) to (middle); 
\end{tikzpicture}
\phantom{1} + \phantom{1} 6 \times \bigg(
\begin{tikzpicture}[scale=0.6,baseline=0.2cm]
\node at (0,1.8) [circ] (above) {}; 
\node at (0,0.8) [dot] (middle) {}; 
\node at (0,-0.8) [var] (below) {\tiny $=$}; 
\node at (-0.8,0) [dot] (left) {}; 
\node at (0.8,0) [dot] (right) {}; 
\node at (0,-1.2) () {\tiny $(t,\om)$}; 
\draw[kernel] (middle) to (above); 
\draw[kernel] (below) to (middle); 
\draw[kepsilon] (below) to (left); 
\draw[kepsilon] (below) to (right); 
\draw[kernel] (middle) to (left); 
\draw[kernel] (middle) to (right); 
\end{tikzpicture}
\phantom{1} - \phantom{1}
\cc_n'
\cdot
\begin{tikzpicture}[scale=0.7,baseline=-0.2cm]
\node at (0,1) [circ] (above) {}; 
\node at (0,-1) [root] (below) {}; 
\node at (0,-1.4) () {\tiny $(t,\om)$}; 
\draw[kernel] (below) to (above) {}; 
\end{tikzpicture}
\bigg) .
\end{align*}
It is worth pointing out here that for this diagram, the renormalisation with the logarithmically diverging constant $\cc_n'$ does not result in the complete removal of a diagram, but only in its modification. In fact, when performing the  renormalisation procedure for more complicated equations, it is common that the renormalisation of a graph results in the subtraction of divergent substructures, rather than the removal of the whole graph. It is rather a peculiarity that in the graphs \eqref{aa9} and \eqref{ab1} the removal of the divergent substructure amounts to removing the whole graph. See \cite{Ajay} for a discussion of this point in a much more general framework.

\subsection{Bounds on iterated integrals}
We now proceed to explain how to derive the bound~\eqref{eq:main_bound1} for the various  
stochastic integrals introduced in the previous subsection. The core ingredient is the isometry 
identity \eqref{e.isometry}, which permits to bound the second moment of an iterated stochastic integral 
by the $L^2$ norm of the corresponding kernel. 

\smallskip

\textbf{The symbols $\tau = \<1>$, $\<2>$, $\<30>$.}
As before, we first treat the symbol $\<1>$. For this symbol, equation \eqref{stoch-representation} together with the standard It\^o isometry yields 
\begin{align}
\label{e.graph1.diff.times}
\E \big[  \hat{\<1>}(t, \om) \, \hat{\<1>}(t',-\om) \big] = \int_{\R}  \hP_{t-u}(\om) \hP_{t'-u}(-\om) \, \d u = \frac{e^{-|t-t'| \scal{\om}^{2} }}{2 \scal{\om}^{2}},
\end{align}
and in particular the bound~\eqref{eq:main_bound1} (for $\alpha = -\frac12$) follows from the trivial bound $e^{-|t-t'| \scal{\om}^{2}} \leq 1$. For later use, we record the following immediate corollary of the previous bound: for any $\gamma \ge 0$,
\begin{align}\label{e:qw1}
 \big| \E \big[  \hat{\<1>}(t, \om) \, \hat{\<1>}(t',-\om) \big] \big| \lesssim \frac{1}{ \scal{\om}^{2}} \Big(  \frac{1}{|t-t'| \scal{\om}^{2}} \Big)^\gamma,
\end{align}
where the implicit constant depends only the choice of $\gamma$. In graphical notation, the previous calculation with $t = t'$ becomes
\begin{equation} \label{eq:graph_free}
	\E \big[  | \hat{\<1>}(t, \om) |^2 \big] \phantom{1}  = \phantom{1}
	\begin{tikzpicture}[scale=0.6,baseline=-0.1cm]
	\node at (0,-1) [root] (below) {}; 
	\node at (0,0) [dot] (middle){};
	\node at (0,1) [root] (above) {};
	\node at (0,1.4) [] () {\tiny $(t,-\om)$}; 
	\node at (0,-1.4) [] () {\tiny $(t,\om)$}; 
	\draw[kernel] (below) to (middle); 
	\draw[kernel] (above) to (middle); 
	\end{tikzpicture}
	\phantom{1} \ls \scal{\om}^{-2},
\end{equation}
which can be obtained from \eqref{qw2} by ``doubling'' the graph and by contracting the leaves \tikz[baseline=-3] \node [circ] {}; to a vertex \tikz[baseline=-3] \node [dot] {};. The interpretation 
of the resulting graph then remains the same as in the previous section, i.e. the time variable
is integrated out. This algorithm for producing the diagram is very natural: first, taking the square corresponds to the doubling of the graph; and then taking the expectation results in collapsing each pair of leaves represented by a vertex \tikz[baseline=-3] \node [circ] {};  to a single vertex \tikz[baseline=-3] \node [dot] {};, for each possible pairing; this being due to the trivial covariance structure of the instances of white noise represented by the leaves \tikz[baseline=-3] \node [circ] {}; (in other words, this being due to It\^o's isometry). In the same ``graphical'' way, we obtain the formula 
\begin{equation} \label{eq:bound_free_square}
	\E \Ll[ |\hat{\<2>}(t,\om)|^{2} \Rr] \phantom{1} = \phantom{1} 2 \phantom{1}
	\begin{tikzpicture}[scale=0.7,baseline=-0.1cm]
	\node at (0,-1) [root] (below) {}; 
	\node at (0,1) [root] (above) {};
	\node at (-1,0) [dot] (left) {}; 
	\node at (1,0) [dot] (right) {}; 
	\node at (0,-1.4) [] {\scriptsize $(t,\om)$}; 
	\node at (0,1.4) [] {\scriptsize $(t,-\om)$}; 
	\draw[kernel] (below) to (left); 
	\draw[kernel] (below) to (right); 
	\draw[kernel] (above) to (left); 
	\draw[kernel] (above) to (right); 
	\end{tikzpicture},
	\phantom{1} 
\end{equation}
which should be read as 
\begin{align}\notag
	\E  \Ll[ |\hat{\<2>}(t,\om)|^{2} \Rr]   &= 2 \sum_{\om_1 + \om_2 = \om} \int_{\R^2} \Big( \hP_{t-u_1}(\om_1) \hP_{t-u_1} (-\om_1)  \hP_{t-u_2}(\om_2) \hP_{t-u_2} (-\om_2)  \Big) \, \d u_1 \, \d u_2 \\
\label{yu1}
	&= 2  \sum_{\om_1 + \om_2 = \om}  \frac{1}{2 \scal{\om_1}^{2}}  \frac{1}{2 \scal{\om_2}^{2}}.
\end{align}
Of course, this expression could now be bounded directly, but we prefer a slightly more general approach which will allow us to systematise the calculations to come. The following lemma, which gives a bound on discrete convolutions, is essentially contained in \cite[Lemma 10.14]{Martin1}. We formulate it in arbitrary space dimension $d$, although we are only interested in the case $d = 3$ here.

\begin{lemma}\label{l41}
Let $d \geq 1$ and $\alpha, \beta \in \R$ satisfy
\begin{align}\label{qw4}
\alpha + \beta > d \quad \text{and} \quad \alpha, \beta< d.
\end{align}
We have, uniformly over $\omega \in \Z^d$,
\begin{align}\label{ju1}
\sum_{\substack{\om_1, \om_2 \in \Z^d \\ \om_1 +\om_2 = \om}} \scal{\om_1}^{-\alpha}   \scal{\om_2}^{-\beta}  \lesssim  \scal{\om}^{d -\alpha-\beta} .
\end{align}
\end{lemma}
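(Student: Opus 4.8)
The plan is to collapse the double sum to a single sum over $\om_1 \in \Z^d$, with $\om_2 = \om - \om_1$ determined by the constraint, and to estimate it by splitting the lattice according to the relative sizes of $\scal{\om_1}$, $\scal{\om_2}$ and $\scal{\om}$. Throughout I will use that $\scal{\om} \simeq 1 + |\om|$, so the weights behave like ordinary polynomial weights, and that $\scal{\cdot} \ge 1$; the latter lets me treat $\scal{\om}$ as a number comparable to $1$ in the regime of bounded $|\om|$, where the right-hand side is $\simeq 1$ while the left-hand side converges since $\alpha + \beta > d$.

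First I would record two elementary lattice-sum facts, both obtained by comparison with the corresponding integral over $\R^d$ after replacing $\scal{\cdot}$ by $1 + |\cdot|$: for $R \ge 1$ one has the \emph{ball sum} bound $\sum_{\scal{\om_1} \le R} \scal{\om_1}^{-s} \lesssim R^{d-s}$ whenever $s < d$, and the \emph{tail sum} bound $\sum_{\scal{\om_1} \ge R} \scal{\om_1}^{-s} \lesssim R^{d-s}$ whenever $s > d$. I would also note that the hypotheses force $\alpha, \beta > 0$: from $\alpha < d$ and $\alpha + \beta > d$ we get $\beta > d - \alpha > 0$, and symmetrically $\alpha > 0$. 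This positivity is what lets me discard the larger of the two weights cheaply.

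By the symmetry $\om_1 \leftrightarrow \om_2 = \om - \om_1$, which swaps $\alpha$ and $\beta$, it suffices to bound the contribution of the set $\{\scal{\om_1} \le \scal{\om_2}\}$. On this set $|\om| \le |\om_1| + |\om_2| \le 2|\om_2|$ gives $\scal{\om} \lesssim \scal{\om_2}$. I then split according to $\scal{\om_1}$ versus $\scal{\om}$. When $\scal{\om_1} \le \scal{\om}$, I use $\scal{\om} \lesssim \scal{\om_2}$ and $\beta > 0$ to pull out $\scal{\om_2}^{-\beta} \lesssim \scal{\om}^{-\beta}$, and then apply the ball sum with $s = \alpha < d$ to obtain $\scal{\om}^{-\beta} \scal{\om}^{d-\alpha} = \scal{\om}^{d-\alpha-\beta}$. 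When $\scal{\om_1} > \scal{\om}$, I instead use $\scal{\om_2} \ge \scal{\om_1}$ and $\beta > 0$ to bound the summand by $\scal{\om_1}^{-(\alpha+\beta)}$, and apply the tail sum with $s = \alpha + \beta > d$ and $R = \scal{\om}$ to obtain $\scal{\om}^{d-\alpha-\beta}$. Summing the two sub-cases and adding the symmetric contribution yields \eqref{ju1}.

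The argument is essentially bookkeeping, and I expect no genuine obstacle beyond the passage from the discrete sums on $\Z^d$ to the scaling bounds $R^{d-s}$, that is, the lattice-point counting underlying the ball and tail estimates. This is precisely where the three hypotheses are used sharply: $\alpha < d$ (resp.\ $\beta < d$) is exactly what makes the ball sum scale like $R^{d-\alpha}$ rather than saturating at $O(1)$, while $\alpha + \beta > d$ is exactly what makes the tail sum converge at the claimed rate. The only point requiring a little care is to confirm that the two sub-cases together with the symmetric part cover all of $\Z^d$, which they do since every pair has a larger and a smaller weight.
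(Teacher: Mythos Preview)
Your proof is correct and follows essentially the same strategy as the paper's: split the convolution sum into regions according to the relative sizes of $|\om_1|$, $|\om_2|$, $|\om|$, and handle each piece by either a ball-sum estimate (using $\alpha<d$ or $\beta<d$) or a tail-sum estimate (using $\alpha+\beta>d$). Your organisation via the symmetry $\om_1\leftrightarrow\om_2$ reduces the bookkeeping to two sub-cases rather than the paper's four regions $\mathcal A_1,\ldots,\mathcal A_4$, and your explicit observation that the hypotheses force $\alpha,\beta>0$ is what lets you absorb the paper's ``middle'' region $\mathcal A_4$ into the other cases, but the underlying estimates are identical.
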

\begin{proof}
We subdivide the index set  $\mathcal{A} = \{ (\om_1, \om_2) \in (\Z^d)^2 \colon \om_1 +  \om_2 = \om \}$ of the summation into 
\begin{align*}
\mathcal{A}_1 &: = \big\{ (\om_1, \om_2 ) \in \mathcal{A} \colon  \quad |\om_1| \geq 2 |\om| \big\} \\
\mathcal{A}_2 &: = \big\{ (\om_1, \om_2 ) \in \mathcal{A} \colon \quad |\om_1| \leq \frac12 |\om| \big\} \\
\mathcal{A}_3 &: =\big \{ (\om_1, \om_2 ) \in \mathcal{A} \colon \quad |\om_2| \leq \frac12 |\om| \big\} \\
\mathcal{A}_4  &: = \mathcal{A} \setminus \Big( \bigcup_{j=1}^3 \mathcal{A}_j\Big) ,
\end{align*}
and bound the sums over the individual $\mathcal{A}_j$ separately. For $(\om_1, \om_2) \in \mathcal{A}_1$, we make use of the triangle inequality in the form $|\om_2| = |\om - \om_1| \geq |\om_1| - |\om| \geq \frac12 |\om_1|$ to get
\begin{align*}
\sum_{(\om_1, \om_2) \in \mathcal{A}_1} \scal{\om_1}^{-\alpha}   \scal{\om_2}^{-\beta}  \leq \sum_{(\om_1, \om_2) \in \mathcal{A}_1} \scal{\om_1}^{-\alpha}  \big(\frac12 \scal{\om_1} \big)^{-\beta}  \lesssim \sum_{|\om_1| \geq 2|\om|}
\scal{\om_1}^{-\alpha-\beta} \lesssim  \scal{\om}^{d -\alpha-\beta} ,
\end{align*}
where we have used the first condition in \eqref{qw4}. For $\mathcal{A}_2$, we use the triangle inequality in the form
$|\om_2| = |\om - \om_1| \geq |\om| - |\om_1| \geq \frac12 |\om|$ to get 
\begin{align*}
\sum_{(\om_1, \om_2) \in \mathcal{A}_2} \scal{\om_1}^{-\alpha}   \scal{\om_2}^{-\beta}  \leq \big( \frac12 \scal{\om}\big)^{-\beta} \sum_{|\om_1| \leq \frac12 |\om|} \scal{\om_1}^{-\alpha} \lesssim \scal{\om}^{d-\alpha -\beta },
\end{align*}
where this time we have used the second assumption (on $\alpha$) in \eqref{qw4}. Exchanging the role of 
$\om_1$ and $\om_2$, the same bound follows for the sum over $\mathcal{A}_3$. Finally, on $\mathcal{A}_4$ we have $ |\om_1|, |\om_2| \geq \frac12 |\om|$, so that
\begin{align*}
\sum_{(\om_1, \om_2) \in \mathcal{A}_4} \scal{\om_1}^{-\alpha}   \scal{\om_2}^{-\beta}  \leq (\frac12 \scal{\om})^{-\alpha}  \sum_{|\om_2| \ge \frac 1 2 |\om|} \big( \frac12 \scal{\om_2} \big)^{-\beta}  \lesssim \scal{\om}^{d-\alpha -\beta },
\end{align*}
and the statement follows.
\end{proof}
We briefly discuss the conditions \eqref{qw4} on the exponents $\alpha, \beta$. The first condition $\alpha + \beta>d$ is necessary to obtain a bound of the type \eqref{ju1}, because without it even the convergence of the sum cannot be guaranteed. The second condition $\alpha, \beta<d$ may seem more surprising. It states that a decay beyond summability for $\langle \om_1\rangle^{-\alpha}$ or $\langle \om_2\rangle^{-\beta}$  does not improve the behaviour of the convolution. We will see below that  this restriction  corresponds exactly to the fact that a (renormalised) product of a random function $f$ and a random distribution $g$ cannot have better regularity than $g$ itself.  We will show below how the use of the resonant product $\pe$ instead of the usual product translates into a Lemma~\ref{le:circ_convolution}, which can be understood as a variant of Lemma~\ref{l41} for which this restriction is removed.

\smallskip

Applying this Lemma to the right side of \eqref{yu1} yields
\begin{align}\notag
	\E  \Ll[ |\hat{\<2>}(t,\om)|^{2}  \Rr]  &\lesssim \frac{1}{\scal{\om}},
\end{align}
which is the desired bound \eqref{eq:main_bound1} with $\alpha=-1$ for this symbol. This implies that this process belongs to $C^{\beta}$ for every $\beta<-1$. 

\smallskip

We move on to the symbol $\<30>$. We first discuss why the need for the extra integration against the heat kernel arises. If we tried to work with $\<3>$, that is to say, to define the limit of $\<1>_n^3 - 3\cc_n \<1>_n$ (see \eqref{e:ItInt2}), then the same calculation as \eqref{eq:bound_free_square} and \eqref{yu1} would become 
\begin{align*}
\E  \Ll[ |\hat{\<3>}(t,\om)|^{2} \Rr]=6 \times  \phantom{1}
	\begin{tikzpicture}[scale=0.8,baseline=-0.2cm]
	\node at (0,0) [dot] (middle) {}; 
	\node at (-1,0) [root] (left) {};
	\node at (1,0) [root] (right) {}; 
	\node at (0,1) [dot] (above) {}; 
	\node at (0,-1) [dot] (below) {}; 
	\node at (-1.7,0) [] {\scriptsize $(t,\om)$}; 
	\node at (1.8,0) [] {\scriptsize $(t,-\om)$}; 
	\draw[kernel] (left) to (middle); 
	\draw[kernel] (left) to (above); 
	\draw[kernel] (left) to (below); 
	\draw[kernel] (right) to (middle); 
	\draw[kernel] (right) to (above); 
	\draw[kernel] (right) to (below); 
	\end{tikzpicture} \phantom{1}
	= 6  \sum_{\om_1 + \om_2 + \om_3 = \om}  \frac{1}{2 \scal{\om_1}^{2}}  \frac{1}{2 \scal{\om_2}^{2}} \frac{1}{2 \scal{\om_3}^{2}}  ,
\end{align*}
but then Lemma \ref{l41} does not apply to this situation (applying the Lemma once for the variables $\om_1$ and $\omega_2$ would yields the bound $\lesssim  \sum_{\tilde\om + \om_3 = \om}  \scal{\tilde\om}^{-1} \, \scal{\om_3}^{-2}$, but then the resulting exponents $\alpha = 1$ and $\beta =2$ just fail the summability condition $\alpha + \beta >3$.) We leave it for the reader to check that indeed, this sum diverges logarithmically for every $\om$. However, this problem can be fixed by considering different times $t \neq t'$. Then recalling \eqref{e:qw1} we get
for any $\gamma \geq 0$
\begin{align*}
\E  \Ll[ \hat{\<3>}(t,\om) \hat{\<3>}(t',-\om)\Rr] & = 6 \times  \phantom{1}
	\begin{tikzpicture}[scale=0.8,baseline=-0.2cm]
	\node at (0,0) [dot] (middle) {}; 
	\node at (-1,0) [root] (left) {};
	\node at (1,0) [root] (right) {}; 
	\node at (0,1) [dot] (above) {}; 
	\node at (0,-1) [dot] (below) {}; 
	\node at (-1.7,0) [] {\scriptsize $(t,\om)$}; 
	\node at (1.8,0) [] {\scriptsize $(t',-\om)$}; 
	\draw[kernel] (left) to (middle); 
	\draw[kernel] (left) to (above); 
	\draw[kernel] (left) to (below); 
	\draw[kernel] (right) to (middle); 
	\draw[kernel] (right) to (above); 
	\draw[kernel] (right) to (below); 
	\end{tikzpicture} \phantom{1}\\
	&	\lesssim \frac{1}{|t-t'|^{\gamma}} \sum_{\om_1 + \om_2 + \om_3 = \om}  \frac{1}{ \scal{\om_1}^{2 + 2\gamma}}  \frac{1}{ \scal{\om_2}^{2}} \frac{1}{ \scal{\om_3}^{2}}  \\
	&\lesssim \frac{1}{|t-t'|^\gamma \scal{\om}^{2\gamma} },
\end{align*}
which (for $\gamma<1$) can be taken as a basis for defining $\<3>$ as a space-time distribution. We prefer, to integrate it  once more against a heat kernel (because this is the way it enters the solution theory for \eqref{e:eqX}), yielding an object which can be evaluated at fixed time. More precisely, for every $\gamma\in (0,1)$,
\begin{align}
\notag
\E \Ll[ | \hat{\<30>}(t,\om)|^2 \Rr]&  =  6 \phantom{1}
	\begin{tikzpicture}[scale=0.8,baseline=-0.2cm]
	\node at (0,0) [dot] (middle) {}; 
	\node at (-1,0) [dot] (left) {};
	\node at (1,0) [dot] (right) {}; 
	\node at (0,1) [dot] (above) {}; 
	\node at (0,-1) [dot] (below) {}; 
	\node at (-2,0) [root] (farleft) {}; 
	\node at (2,0) [root] (farright) {}; 
	\node at (-1,-0.3) [] {\scriptsize $u$}; 
	\node at (1,-0.3) [] {\scriptsize $u'$}; 
	\draw[kernel] (left) to (middle); 
	\draw[kernel] (left) to (above); 
	\draw[kernel] (left) to (below); 
	\draw[kernel] (right) to (middle); 
	\draw[kernel] (right) to (above); 
	\draw[kernel] (right) to (below); 
	\draw[kernel] (farleft) to (left); 
	\draw[kernel] (farright) to (right); 
	\end{tikzpicture} \phantom{1}\\
	&	\lesssim \int_{\R^2} \hP_{t-u}(
	\om) \hP_{t-u'}(\om) \frac{1}{|u-u'|^\gamma \scal{\om}^{2\gamma} }  \, \d u \, \d u'
	 \lesssim \frac{1}{\scal{\om}^4},
\label{qu3}
\end{align}
which proves that this symbol satisfies \eqref{eq:main_bound1} for $\alpha=\frac{1}{2}$.

\smallskip

\textbf{Case $\tau = \<31p>$.}
We now turn to the case $\tau = \<31p>$. Recall that its decomposition into components in homogeneous Wiener chaoses was given in \eqref{eq:expression_31first}. We write
\begin{equation} \label{eq:expression_31}
\hat{\<31p>}(t,\om) = \phantom{1}
\begin{tikzpicture}[scale=0.7,baseline=-0.3cm]
\node at (0,0) [dot] (middle) {}; 
\node at (0,1) [circ] (above) {};
\node at (-0.7,0.6) [circ] (left) {}; 
\node at (0.7,0.6) [circ] (right) {}; 
\node at (0,-1) [var] (below) {\tiny $=$}; 
\node at (0.8,-0.3) [circ] (middleright) {}; 
\node at (0,-1.4) [] {\tiny $(t,\om)$}; 
\draw[kernel] (middle) to (left); 
\draw[kernel] (middle) to (above); 
\draw[kernel] (middle) to (right); 
\draw[kernel] (below) to (middle); 
\draw[kepsilon] (below) to (middleright); 
\end{tikzpicture}
\phantom{1} + \phantom{1} 3 \times
\begin{tikzpicture}[scale=0.7,baseline=-0.3cm]
\node at (0,0) [dot] (middle) {}; 
\node at (-0.7,0.8) [circ] (aboveleft) {}; 
\node at (0.7,0.8) [circ] (aboveright) {}; 
\node at (0,-1.2) [var] (below) {\tiny $=$}; 
\node at (1,-0.6) [dot] (right) {}; 
\node at (0,-1.6) [] {\tiny $(t,\om)$}; 
\draw[kernel] (below) to (middle); 
\draw[kepsilon] (below) to (right); 
\draw[kernel] (middle) to (right); 
\draw[kernel] (middle) to (aboveleft); 
\draw[kernel] (middle) to (aboveright); 
\end{tikzpicture} \phantom{1}
=: \hat{\<31p>}^{(4)}(t,\om) + \hat{\<31p>}^{(2)}(t,\om).
\end{equation}
It is then clear that $\hat{\<31p>} \in \mcl H_{n}$, and as a consequence of the orthogonality of the $\mcl H_{n}$'s in $L^{2}$, we have
\begin{equation} \label{eq:variance_31}
\E \Ll[ |\hat{\<31p>}(t,\om)|^{2}\Rr] = \E \Ll[|\hat{\<31p>}^{(4)}(t,\om)|^{2} \Rr]+ \E\Ll[ |\hat{\<31p>}^{(2)}(t,\om)|^{2}\Rr]. 
\end{equation}
In our graphical notation, the first term above can be bounded by
\begin{equation} \label{eq:31_4}
	\E \Ll[ |\hat{\<31p>}^{(4)}(t,\om)|^{2} \Rr] \lesssim   \phantom{1} 
	\begin{tikzpicture}[scale=0.8,baseline=-0.4cm]
	\node at (0,0) [dot] (middle) {}; 
	\node at (0,0.8) [dot] (above) {}; 
	\node at (0,-0.8) [dot] (below) {}; 
	\node at (-0.8,0) [dot] (left) {}; 
	\node at (0.8,0) [dot] (right) {}; 
	\node at (0,1.8) [var] (farabove) {\tiny $=$}; 
	\node at (0,-1.8) [var] (farbelow) {\tiny $=$}; 
	\node at (1.8,0) [dot] (farright) {}; 
	\node at (0,2.2) [] {\tiny $(t,-\om)$}; 
	\node at (0,-2.2) [] {\tiny $(t,\om)$}; 
	\draw[kernel] (above) to (left); 
	\draw[kernel] (above) to (middle); 
	\draw[kernel] (above) to (right); 
	\draw[kernel] (below) to (left); 
	\draw[kernel] (below) to (middle); 
	\draw[kernel] (below) to (right); 
	\draw[kernel] (farabove) to (above); 
	\draw[kepsilon] (farabove) to (farright); 
	\draw[kernel] (farbelow) to (below); 
	\draw[kepsilon] (farbelow) to (farright); 
	\end{tikzpicture}
	\phantom{1}.
\end{equation}
There is a slightly subtle point worth underlying here: standard Gaussian calculus (the Wick formula) yields an explicit identity
for the quantity on the left hand side of this expression, in terms of contractions of all of the different leaves on the original diagram representing $\hat{\<31p>}^{(4)}$. This formula includes additional graphs such as 
\begin{equation*}
	\begin{tikzpicture}[scale=0.8,baseline=-0.4cm]
	\node at (0,0) [dot] (middle) {}; 
	\node at (0,0.8) [dot] (above) {}; 
	\node at (0,-0.8) [dot] (below) {}; 
	\node at (-0.8,0) [dot] (left) {}; 
	\node at (0.8,0) [dot] (right) {}; 
	\node at (0,1.8) [var] (farabove) {\tiny $=$}; 
	\node at (0,-1.8) [var] (farbelow) {\tiny $=$}; 
	\node at (1.8,0) [dot] (farright) {}; 
	\node at (0,2.2) [] {\tiny $(t,-\om)$}; 
	\node at (0,-2.2) [] {\tiny $(t,\om)$}; 
	\draw[kernel] (above) to (left); 
	\draw[kernel] (above) to (middle); 
	\draw[kernel] (above) to (farright); 
	\draw[kernel] (below) to (left); 
	\draw[kernel] (below) to (middle); 
	\draw[kernel] (below) to (right); 
	\draw[kernel] (farabove) to (above); 
	\draw[kepsilon] (farabove) to (right); 
	\draw[kernel] (farbelow) to (below); 
	\draw[kepsilon] (farbelow) to (farright); 
	\end{tikzpicture}
	\phantom{1}.
\end{equation*}
However, using \eqref{e.io1} to bound the term on the left side of \eqref{eq:31_4} greatly simplifies the ensuing argument, as opposed to relying on the exact formula involving the asymmetric trees. (This idea was first used in this context in \cite[Section~10]{Martin1}.)

\smallskip

Going back to bounding \eqref{eq:31_4}, we use \eqref{qu3} on the ``left part'' and \eqref{eq:graph_free} on the ``right part'' of the graph to
obtain the bound 
\begin{align}\label{qu7}
	\E  \Ll[ |\hat{\<31p>}^{(4)}(t,\om)|^{2} \Rr] \ls 
	\sum_{\substack{\om_1+ \om_2 =\om \\ \om_1 \sim \om_2 }} \frac{1}{\langle \om_1 \rangle^4} \frac{1}{\langle \om_2 \rangle^2}.
\end{align}
Lemma~\ref{l41} on the decay of convolutions is not enough to bound the remaining sum. Indeed, this is precisely a case as discussed below Lemma~\ref{l41}, where the second condition in \eqref{qw4} fails (here, $4 \geq 3$). Of course, the same estimate could be used by ``forgetting'' some of the good decay of ${\langle \om_1 \rangle^{-4}}$, and replacing it by ${\langle \om_1 \rangle^{-(3-)}}$, but this would only yield a bound of order $\scal{\om}^{-(2-)}$ corresponding to a regularity of index $-\frac12-$ instead of $0-$. The following lemma shows that the additional condition $\om_1 \sim \om_2$, which arises from our use of the resonant product $\pe$ in the definition of the diagram, resolves this problem.

\begin{lemma} \label{le:circ_convolution}
	Let $\alpha, \beta  \in \R$ satisfy $\alpha + \beta >d$. We have, uniformly over $\om \in \Z^d$,
	\begin{equation*}
	\sum_{\substack{\om_{1} + \om_{2} = \om \\ \om_{1} \sim  \om_{2} }} \scal{\om_{1}}^{-\al} \scal{\om_{2}}^{-\be} \ls \scal{\om}^{d-\al-\be}. 
	\end{equation*}
\end{lemma}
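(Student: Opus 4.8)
The plan is to follow the same region decomposition as in Lemma~\ref{l41}, but to exploit the resonance constraint $\om_1 \sim \om_2$ in order to dispense with the second condition $\al,\be<d$. The starting observation is that, by \eqref{qu4}, on the support of the restricted sum the two frequencies are always of comparable size: either both $|\om_1|,|\om_2|\le \frac 8 3$, or the ratio $|\om_1|/|\om_2|$ lies in $[c,c^{-1}]$. In either case $\scal{\om_1}$ and $\scal{\om_2}$ differ only by a multiplicative constant, so that the summand is comparable to $\scal{\om_1}^{-(\al+\be)}$. Writing $\gamma := \al+\be > d$, it therefore suffices to control $\sum \scal{\om_1}^{-\gamma}$ over the admissible frequencies.

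First I would split the sum according to whether $|\om_1|\ge 2|\om|$ or $|\om_1|<2|\om|$. In the first region (the ``high--high'' interaction, where two large frequencies nearly cancel to produce the small output $\om$) the triangle inequality already gives $\frac 1 2 |\om_1| \le |\om_2| \le \frac 3 2 |\om_1|$, so the resonance constraint is automatically satisfied and plays no role. Here I would simply bound the contribution by the tail sum $\sum_{|\om_1|\ge 2|\om|}\scal{\om_1}^{-\gamma} \ls \scal{\om}^{d-\gamma}$, which converges precisely because $\gamma>d$. This is the only point where the first hypothesis is needed, exactly as in Lemma~\ref{l41}.

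In the second region $|\om_1|<2|\om|$ both $\scal{\om_1}$ and $\scal{\om_2}$ are bounded by a multiple of $\scal{\om}$. The key use of the resonance constraint enters here: on the support of the sum one has $|\om_1| \ge c|\om_2| \ge c(|\om|-|\om_1|)$, which forces $|\om_1| \ge \frac{c}{1+c}|\om|$, so that $\scal{\om_1}$ is also bounded \emph{below} by a constant multiple of $\scal{\om}$. Consequently each summand is comparable to $\scal{\om}^{-\gamma}$, and since the number of admissible $\om_1$ is at most $\ls \scal{\om}^{d}$, the contribution of this region is again $\ls \scal{\om}^{d-\gamma}$. (The degenerate case where both frequencies are bounded contributes only finitely many lattice points and hence a constant, which is harmless since $\scal{\om}^{d-\gamma}$ is bounded below for bounded $\om$.) Adding the two regions yields the claimed bound.

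The main point to get right --- and the only substantive difference from Lemma~\ref{l41} --- is the second region: there, the resonance condition eliminates precisely the small-$\om_1$ (and, symmetrically, small-$\om_2$) frequencies whose summation in Lemma~\ref{l41} required the extra decay hypotheses $\al<d$ and $\be<d$. By forcing $\scal{\om_1} \simeq \scal{\om_2} \simeq \scal{\om}$ throughout this region, the resonance constraint lets us replace the separate exponents by their sum $\gamma$ and count frequencies crudely. The only care needed is in handling the transition between the bounded-frequency and comparable-ratio regimes of \eqref{qu4}, but since the former involves only finitely many lattice points it does not affect the estimate.
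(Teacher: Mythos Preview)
Your proof is correct and follows essentially the same idea as the paper's: the resonance constraint forces $\scal{\om_1}\simeq\scal{\om_2}$ and $|\om_1|\gtrsim|\om|$, reducing the estimate to a single convergent tail sum $\sum_{|\om_1|\gtrsim|\om|}\scal{\om_1}^{-(\al+\be)}$. The only difference is that your case split $|\om_1|\ge 2|\om|$ versus $|\om_1|<2|\om|$ is unnecessary---the paper handles both at once by simply summing $\scal{\om_1}^{-\al-\be}$ over $|\om_1|\ge\frac{1}{1+c^{-1}}|\om|$, which already covers your high--high region.
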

\begin{proof}
	Recall \eqref{qu5} and the definition of the ``smooth cut-off'' \eqref{qu4}.
	For small $|\om|$ (say, $|\om| \leq \frac{16}{3})$ there is only a bounded number of admissible $\om_1, \om_2$ 	with $\om_1 + \om_2  = \om$ for which  $\sum_{|k-l| \leq 1} \chi_{k}(\om_1) \chi_l(\om_2)  \neq 0$. Hence, for such $\om$, we 	have 
$$
\sum_{\substack{\om_{1} + \om_{2} = \om \\ \om_{1} \sim  \om_{2} }} \scal{\om_{1}}^{-\al}\scal{\om_{2}}^{-\be} \ls 1.
$$ 
We can thus now assume that $|\om| > \frac{16}{3}$.
	For such $\om$, the conditions $\om_1 + \om_2 = \om$ and $\om_1 \sim \om_2$ enforce that $\frac{|\om_1|}{|\om_2|} \in [c,c^{-1}]$ for $c = \frac{9}{64}$, by \eqref{qu4}. Hence, on the one hand, we have 
	$$
	|\om| \leq |\om_1| + |\om_2| \leq |\om_1| + c^{-1} |\om_1|,
	$$ 
	that is, $|\om_1| \geq \frac{1}{1+c^{-1}} |\om|$, and on the other hand, $|\om_2| \geq c |\om_1| $. These considerations allow us to write
	\begin{equation*}
	\sum_{\substack{\om_{1} + \om_{2} = \om \\ \om_{1} \sim  \om_{2} }} \scal{\om_{1}}^{-\al} \scal{\om_{2}}^{-\be} 
	\lesssim  \sum_{|\om_1 | \geq \frac{1}{1+c^{-1}} |\om| } \scal{\om_{1}}^{-\al-\beta}  \lesssim \scal{\om}^{d - \alpha -\beta},
	\end{equation*}	
	as desired. 
\end{proof}

Applying this Lemma to \eqref{qu7} immediately yields
\begin{align*}
	\E  \Ll[ |\hat{\<31p>}^{(4)}(t,\om)|^{2} \Rr]  \ls \scal{\om}^{-3}. 
\end{align*}
We now turn to the variance of $\hat{\<31p>}^{(2)}$, which has the expression
\begin{align*}
	\E \Ll[ |\hat{\<31p>}^{(2)}(t,\om)|^{2} \Rr] \lesssim \phantom{1}
	\begin{tikzpicture}[scale=1.2,baseline=-0.2cm]
	\node at (-2.4,0) [var] (farleft) {\tiny $=$}; 
	\node at (2.4,0) [var] (farright) {\tiny $=$}; 
	\node at (0,-0.8) [dot] (belowmiddle) {}; 
	\node at (-0.8,0) [dot] (left) {}; 
	\node at (0.8,0) [dot] (right) {}; 
	\node at (-1.6,0.8) [dot] (aboveleft) {}; 
	\node at (1.6,0.8) [dot] (aboveright) {}; 
	\node at (0,0.8) [dot] (abovemiddle) {}; 
	\node at (-2.6,0) () {\tiny $\om$};
	\node at (2.7,0) () {\tiny $-\om$};
	\node at (-2.1,0.6) () {\tiny $\om_{4}$}; 
	\node at (2.1,0.6) () {\tiny $-\om_{4}'$}; 
	\node at (-1.1,0.6) () {\tiny $-\om_{4}$}; 
	\node at (1.1,0.6) () {\tiny $\om_{4}'$}; 
	\node at (-0.6,0.4) () {\tiny $\om_{1}$}; 
	\node at (0.6,0.4) () {\tiny $-\om_{1}$}; 
	\node at (-0.6,-0.4) () {\tiny $\om_{2}$}; 
	\node at (0.6,-0.4) () {\tiny $-\om_{2}$}; 
	\node at (-1.6,-.15)(){\tiny $\om_{5}$};
	\node at (1.6,-.15)(){\tiny $-\om_{5}'$};
	\draw[kepsilon] (farleft) to (aboveleft); 
	\draw[kernel] (farleft) to (left); 
	\draw[kepsilon] (farright) to (aboveright); 
	\draw[kernel] (farright) to (right); 
	\draw[kernel] (left) to (aboveleft); 
	\draw[kernel] (left) to (belowmiddle); 
	\draw[kernel] (left) to (abovemiddle); 
	\draw[kernel] (right) to (belowmiddle); 
	\draw[kernel] (right) to (aboveright); 
	\draw[kernel] (right) to (abovemiddle);
	\end{tikzpicture}. 
\end{align*}
For later reference, we have labelled all the edges with their frequency variables (but dropped the time variables) in the diagram above. 
In addition to the identities already implicit in this diagram, Kirchhoff's law enforces that we have to sum over 
the indices $\om_1, \om_2, \om_4, \om_5, \om_4', \om_5'$ satisfying
\begin{align*}
\om_4 + \om_5 = \om ,  \qquad  \om_4' + \om_5' = \om , \qquad \om_1 + \om_2 = \om, 
\end{align*}
so that we ultimately have to sum over three free variables. The fact that we have a resonant product $\pe$ at the roots yields the additional constraints $\om_4 \sim \om_5$ and $\om_4' \sim \om_5'$, but we will not rely on these additional constraints to bound this diagram.
With this notation in place, we proceed to analyse each part of this diagram separately. The inner square corresponds to the integral
\begin{align*}
&\sum_{\om_1 + \om_2 = \om} \Big(\int_\R P_{u - u_1}(\om_1) P_{u'- u_1}(-\om_1) \, \d u_1 \Big) \Big(\int_\R P_{u - u_2}(\om_2) P_{u'- u_1}(-\om_2) \, \d u_2 \Big)\\
&\lesssim \sum_{\om_1 + \om_2 = \om} \frac{1}{\scal{\om_1}^2} \frac{1}{\scal{\om_2}^2} \lesssim \frac{1}{\scal{\om}},
\end{align*}
where we have used Lemma~\ref{l41} in the last inequality.
Note that this bound is slightly sub-optimal, because we have not used the fact that the time variables $u$ and $u'$ corresponding to the nodes at the left and right corners of the square are different. In principle, this would 
yield extra factors $\exp(-|u-u'| \scal{\om_i})$ for each term, but we simply bound these factors by $1$. Similarly, we get for the left-most triangle
\begin{align*}
\sum_{\om_4 + \om_5 = \om} \int P_{t-u}(\om_5)    \Big(\int_{\R} P_{t-u_4}(\om_4) P_{u-u_4}(-\om_4) \, \d u_4 \Big) \, \d u \\
 \lesssim
 \sum_{\om_4 + \om_5 = \om} \int P_{t-u}(\om_5)   \frac{1}{\scal{\om_4}^2}  \, \d u \lesssim  \sum_{\om_4 + \om_5 = \om}  \frac{1}{\scal{\om_4}^2}   \frac{1}{\scal{\om_5}^2} \lesssim \frac{1}{\scal{\om}},  
\end{align*}
where again we have used Lemma~\ref{l41}.
The right-most triangle in the diagram is bounded in the same way, resulting in the final bound
\begin{align*}
	\E  \Ll[ |\hat{\<31p>}^{(2)}(t,\om)|^{2} \Rr]  \ls \scal{\om}^{-3}, 
\end{align*}
as desired.

\smallskip

\textbf{Case $\tau = \<22p>$.}
For this symbol, we recall from \eqref{e.decomp.22p} the Wiener chaos decomposition
\begin{align*}
\hat{\<22p>}(t,\om) &= \phantom{1} \begin{tikzpicture}[scale=0.7,baseline=-0.3cm]
\node at (0,0) [dot] (middle) {}; 
\node at (-0.6,0.8) [circ] (aboveleft) {}; 
\node at (0.6,0.8) [circ] (aboveright) {}; 
\node at (0,-1.2) [var] (below) {\tiny $=$}; 
\node at (-0.6,-0.3) [circ] (left) {}; 
\node at (0.6,-0.3) [circ] (right) {}; 
\node at (0,-1.8) () {\tiny $(t,\om)$}; 
\draw[kepsilon] (below) to (left); 
\draw[kepsilon] (below) to (right); 
\draw[kernel] (below) to (middle); 
\draw[kernel] (middle) to (aboveleft); 
\draw[kernel] (middle) to (aboveright); 
\end{tikzpicture}
\phantom{1} + \phantom{1} 4 \times
\begin{tikzpicture}[scale=0.7,baseline=-0.3cm]
\node at (0,0) [dot] (middle) {}; 
\node at (-0.8,0.6) [circ] (aboveleft) {}; 
\node at (-0.8,-0.6) [circ] (left) {}; 
\node at (0.8,-0.6) [dot] (right) {}; 
\node at (0,-1.2) [var] (below) {\tiny $=$}; 
\node at (0,-1.8) () {\tiny $(t,\om)$}; 
\draw[kernel] (middle) to (aboveleft); 
\draw[kernel] (middle) to (right); 
\draw[kepsilon] (below) to (left); 
\draw[kepsilon] (below) to (right); 
\draw[kernel] (below) to (middle); 
\end{tikzpicture}
\phantom{1} 
=: \hat{\<22p>}^{(4)}(t,\om) + \hat{\<22p>}^{(2)}(t,\om) 
\; .
\end{align*}

We start with $\hat{\<22p>}^{(4)}$. Similarly to the case for $\<31p>$,  we have the bound
\begin{equation} \label{eq:22_4}
		\E \Ll[ |\hat{\<22p>}^{(4)}(t,\om)|^{2} \Rr] \ls \phantom{1}
		\begin{tikzpicture}[scale=0.7,baseline=-0.2cm]
		\node at (0,0.8) [dot] (above) {}; 
		\node at (0,-0.8) [dot] (below) {}; 
		\node at (-0.8,0) [dot] (left) {}; 
		\node at (0.8,0) [dot] (right) {}; 
		\node at (0,1.8) [var] (farabove) {\tiny $=$}; 
		\node at (0,-1.8) [var] (farbelow) {\tiny $=$}; 
		\node at (1.8,0) [dot] (farright) {}; 
		\node at (-1.8,0) [dot] (farleft) {}; 
		\node at (0,2.2) [] {\tiny $(t,-\om)$}; 
		\node at (0,-2.2) [] {\tiny $(t,\om)$}; 
		%
		\draw[kernel] (above) to (left); 
		\draw[kernel] (above) to (right); 
		\draw[kernel] (below) to (left); 
		\draw[kernel] (below) to (right); 
		\draw[kernel] (farabove) to (above); 
		\draw[kepsilon] (farabove) to (farright); 
		\draw[kepsilon] (farabove) to (farleft); 
		\draw[kernel] (farbelow) to (below); 
		\draw[kepsilon] (farbelow) to (farright); 
		\draw[kepsilon] (farbelow) to (farleft); 
		\end{tikzpicture} 
\phantom{1}.
\end{equation}
After all our preparation, this diagram poses no additional difficulty. First, the same calculation as in \eqref{qu3} allows to bound the ``inner part'' of the diagram by ${\scal{\om_2}}^{-5}$, and the integrals corresponding to the  ``outer parts'' can be bounded by ${\scal{\om_1}^{-2}}$ and ${\scal{\om_3}^{-2}}$ immediately, yielding the bound
\begin{align}\label{qu9} 
		\E \Ll[ |\hat{\<22p>}^{(4)}(t,\om)|^{2} \Rr] \ls \sum_{\substack{\om_1 + \om_2 + \om_3 = \om\\ \om_1 + \om_3 \sim \om_2}} \frac{1}{\scal{\om_1}^2} \frac{1}{\scal{\om_2}^5}\frac{1}{\scal{\om_3}^2} \ls \sum_{\substack{\tilde\om + \om_3 = \om\\ \tilde\om \sim \om_2}} \frac{1}{\scal{\tilde \om}} \frac{1}{\scal{\om_2}^5} \ls \frac{1}{\scal{\om}^3},
\end{align}
where we have used Lemma~\ref{l41} in the first and Lemma~\ref{le:circ_convolution} in the second inequality.


\smallskip

We now turn to the bound for $\hat{\<22p>}^{(2)}$, for which we have
\begin{equation} \label{eq:22_2}
	\E \Ll[ |\hat{\<22p>}^{(2)}(t,\om)|^{2} \Rr]\ls \phantom{1}
	\begin{tikzpicture}[scale=1.1,baseline=-0.2cm]
	\node at (0,0.5) [dot] (above) {}; 
	\node at (0,-0.5) [dot] (below) {}; 
	\node at (0,1.5) [var] (farabove) {\tiny $=$}; 
	\node at (0,-1.5) [var] (farbelow) {\tiny $=$}; 
	\node at (-0.5,0) [dot] (left) {}; 
	\node at (-1.5,0) [dot] (farleft) {}; 
	\node at (1,0.5) [dot] (aboveright) {}; 
	\node at (1,-0.5) [dot] (belowright) {}; 
	\node at (0.1, 0.3) () {\tiny $u'$}; 
	\node at (0.1,-0.3) () {\tiny $u$}; 
	\node at (-0.45, 0.35) () {\tiny $-\om_{1}$}; 
	\node at (-0.45, -0.35) () {\tiny $\om_{1}$}; 
	\node at (0.5,-0.35) () {\tiny $\om_{2}$}; 
	\node at (0.5, 0.3) () {\tiny $\om_{2}'$}; 
	\node at (-1,-0.8) () {\tiny $\om_{3}$}; 
	\node at (-1,0.8) () {\tiny $-\om_{3}$}; 
	\node at (0.7, -1.2) () {\tiny $-\om_{2}$}; 
	\node at (0.7, 1.2) () {\tiny $-\om_{2}'$}; 
	\node at (0,-1.8) () {\tiny $(t,\om)$}; 
	\node at (0,1.8) () {\tiny $(t,-\om)$}; 
	\draw[kernel] (farabove) to (above); 
	\draw[kepsilon] (farabove) to (aboveright); 
	\draw[kepsilon] (farabove) to (farleft); 
	\draw[kernel] (farbelow) to (below); 
	\draw[kepsilon] (farbelow) to (belowright); 
	\draw[kepsilon] (farbelow) to (farleft); 
	\draw[kernel] (above) to (left); 
	\draw[kernel] (above) to (aboveright); 
	\draw[kernel] (below) to (left); 
	\draw[kernel] (below) to (belowright);  
	\end{tikzpicture} \phantom{1} .
\end{equation}
This graph is more complicated, and requires more careful treatment. We integrate first the innermost time variable, with two arrows pointing to it labelled $\om_1$ and $-\om_1$ respectively. We bound this contribution by $\scal{\om_1}^{-2}$, uniformly over $u$ and $u'$, as in \eqref{eq:graph_free}.  We are more careful with the integration of the time variable with incoming arrows labelled $\om_2$ and $-\om_2$, and evaluate its contribution to be
\begin{equation*}  
\frac{e^{-|t-u| \scal{\om_2}^2}}{2 \scal{\om_2}^2},
\end{equation*}
as in \eqref{e.graph1.diff.times}. We next compute the contribution of the triangle in the lower part of the diagram by integrating over $u$:
\begin{equation*}  
\int_\R \hP_{t-u}(\om_1 + \om_2) \, \frac{e^{-|t-u| \scal{\om_2}^2}}{2 \scal{\om_2}^2} \, \d u = \frac{1}{{2 \scal{\om_2}^2}} \, \frac 1 {\scal{\om_{2}}^{2} + \scal{\om_{1} + \om_{2}}^{2}}.
\end{equation*}
A similar calculation applies to the upper part of the diagram, and we therefore obtain the bound
\begin{multline}
\label{e.bigsum.222}
	\E \Ll[ |\hat{\<22p>}^{(2)}(t,\om)|^2 \Rr]\\
	 \ls \sum \bigg( \scal{\om_{1}} \scal{\om_{2}} \scal{\om_{2}'} \scal{\om_{3}} \big(\scal{\om_{2}} + \scal{\om_{1} + \om_{2}} \big) \big(\scal{\om_{2}'} + \scal{\om_{2}' - \om_{1}} \big) \bigg)^{-2},
\end{multline}
where the sum is over all $(\om_1,\om_2,\om_2',\om_3)$ satisfying
\begin{equation}  
\label{eq:range_222}
\om_{1} + \om_{3} = \om, \quad \om_{1} + \om_{2} \sim \om_{3} - \om_{2}, \quad -\om_{1} + \om_{2}' \sim - \om_{3} - \om_{2}'.
\end{equation}
The first requirement comes from the ``Kirchhoff" law in the bottom node, while the other two constraints come from the paraproduct in the bottom and upper-most nodes.

We proceed to estimate this sum. 
Note first that the first two conditions in \eqref{eq:range_222} imply that
\begin{equation*}  
\om_1 + \om_2 \sim \om - \Ll( \om_1 + \om_2 \Rr) .
\end{equation*}
By \eqref{qu4}, if $|\om_1 + \om_2| > \frac 8 3$, we deduce that
\begin{equation*}  
|\om_1 + \om_2| \ge c \Ll( |\om| - |\om_1 + \om_2| \Rr),
\end{equation*}
where $c = \frac 9 {64}$, and therefore that
\begin{equation*}  
|\om_1 + \om_2| \ge \frac c{1+c} |\om|.
\end{equation*}
After reducing the constant $c > 0$ as necessary, if follows that in every case, the first two conditions in \eqref{eq:range_222} imply that
\begin{equation}  
\label{e.om1om2om}
\scal{\om_1 + \om_2} \ge 2 c \scal{\om}.
\end{equation}
(The factor of $2$ is of course a matter of convenience only.)
The same argument also shows that under the conditions in \eqref{eq:range_222}, we have
\begin{equation}  
\label{e.om1om2'om}
\scal{\om_1 - \om_2'} \ge 2 c \scal{\om}. 
\end{equation}
Define the sets of indices $E_{1}(\omega)$ and $E_{2}(\omega)$ by
\begin{align*}
	E_{1}(\om) & := \Ll\{(\om_1,\om_2,\om_2',\om_3) \ : \ \text{\eqref{eq:range_222} holds and } \Ll( |\om_{1}| < c {|\om|} \  \text{ or } \ |\om_{3}| < c{|\om|} \Rr) \Rr\}, 
	\\
	E_{2}(\om) & := \Ll\{(\om_1,\om_2,\om_2',\om_3) \ : \ \text{\eqref{eq:range_222} holds and } |\om_{1}| \geq c {|\om|}\text{ and }|\om_{3}| \geq c {|\om|} \Rr\}. 
\end{align*}
We first estimate the sum on the right side of \eqref{e.bigsum.222} over the set $E_1(\om)$. By \eqref{e.om1om2om}, \eqref{e.om1om2'om} and the constraint $\om_3 - \om_2 \sim \om_1 + \om_2$, for variables in the set $E_{1}(\om)$, we must have
\begin{align*}
	\scal{\om_{2}} \geq \td c \, \scal{\om} \quad \text{ and } \quad \scal{\om_{2}'} \ge \td c \, \scal{\om},
\end{align*}
for some $\td c > 0$. 
Thus, using the simple bounds
\begin{align*}
\scal{\om_{2}} + \scal{\om_{1} + \om_{2}} \ge \scal{\om_{2}}, \qquad  \scal{\om_{2}'} + \scal{\om_{2}' - \om_{1}} \ge \scal{\om_{2}'} ,
\end{align*}
and summing over $\om_{1}$ and $\om_{3} = \om - \om_1$ using Lemma~\ref{l41}, we arrive at
\begin{equation} \label{eq:222_e1}
\sum_{E_1(\om)} \cdots \ \ls \scal{\om}^{-1} \sum_{|\om_{2}|, |\om_{2}'| \gtrsim |\om|} \bigg( \scal{\om_{2}}^{-4} \cdot \scal{\om_{2}'}^{-4} \bigg) \ls \scal{\om}^{-3}, 
\end{equation}
where the left side above stands for the sum on the right side of \eqref{e.bigsum.222} restricted to the index set $E_1(\om)$. As for the the index set $E_{2}(\om)$, we start by summing over $\om_{2}$ to get
\begin{align*}
	\sum_{\om_{2}} \scal{\om_{2}}^{-2} \big(\scal{\om_{2}} + \scal{\om_{1} + \om_{2}}\big)^{-2} \ls \scal{\om_{1}}^{-2} \sum_{|\om_{2}| \leq \frac{|\om_{1}|}{2}} \scal{\om_{2}}^{-2} + \sum_{|\om_{2}| \geq \frac{|\om_{1}|}{2}} \scal{\om_{2}}^{-4} \ls \scal{\om_{1}}^{-1}, 
\end{align*}
and similarly we have
\begin{align*}
	\sum_{\om_{2}'} \scal{\om_{2}'}^{-2} \big(\scal{\om_{2}'} + \scal{\om_{2}' - \om_{1}} \big)^{-2} \ls \scal{\om_{1}}^{-1}. 
\end{align*}
We deduce that
\begin{align*}
\sum_{E_2(\om)} \cdots \  \ls \sum_{\substack{\om_{1} + \om_{3} = \om \\ |\om_{1}|, |\om_{3}| \geq \frac{|\om|}{16}}} \scal{\om_{1}}^{-4} \scal{\om_{3}}^{-2}  \ls \scal{\om}^{-2} \sum_{|\om_{1}| \geq \frac{|\om|}{16}} \scal{\om_{1}}^{-4} \ls \scal{\om}^{-3}, 
\end{align*}
with the same notational convention as in \eqref{eq:222_e1}. Combining this with \eqref{e.bigsum.222} and \eqref{eq:222_e1} gives the desired bound for $\hat{\<22p>}^{(2)}$.

\smallskip 

\textbf{Case $\tau = \<32p>$.}
This is the last diagram. We have the Wiener chaos decomposition
\begin{align*}
\hat{\<32p>}(t,\om) &= \phantom{1}
\begin{tikzpicture}[scale=0.7,baseline=0cm]
\node at (0,0) [dot] (middle) {}; 
\node at (0,1.2) [circ] (above) {}; 
\node at (-0.7,0.8) [circ] (aboveleft) {}; 
\node at (0.7,0.8) [circ] (aboveright) {}; 
\node at (0,-1) [var] (below) {\tiny $=$}; 
\node at (-0.7,-0.2) [circ] (left) {}; 
\node at (0.7,-0.2) [circ] (right) {}; 
%
\draw[kepsilon] (below) to (left); 
\draw[kepsilon] (below) to (right); 
\draw[kernel] (below) to (middle); 
\draw[kernel] (middle) to (aboveleft); 
\draw[kernel] (middle) to (aboveright); 
\draw[kernel] (middle) to (above); 
\end{tikzpicture}
\phantom{1} + \phantom{1} 6 \times
\begin{tikzpicture}[scale=0.7,baseline=-0.2cm]
\node at (0,0) [dot] (middle) {}; 
\node at (-0.7,0.7) [circ] (aboveleft) {}; 
\node at (0.7,0.7) [circ] (aboveright) {}; 
\node at (-0.7,-0.5) [circ] (left) {}; 
\node at (0.7,-0.5) [dot] (right) {}; 
\node at (0,-1.2) [var] (below) {\tiny $=$}; 
\draw[kernel] (middle) to (aboveleft); 
\draw[kernel] (middle) to (aboveright); 
\draw[kernel] (middle) to (right); 
\draw[kepsilon] (below) to (left); 
\draw[kepsilon] (below) to (right); 
\draw[kernel] (below) to (middle); 
\end{tikzpicture}
\phantom{1} + \phantom{1} 6 \times \bigg(
\begin{tikzpicture}[scale=0.6,baseline=0.2cm]
\node at (0,1.8) [circ] (above) {}; 
\node at (0,0.8) [dot] (middle) {}; 
\node at (0,-0.8) [var] (below) {\tiny $=$}; 
\node at (-0.8,0) [dot] (left) {}; 
\node at (0.8,0) [dot] (right) {}; 
\node at (0,-1.2) () {\tiny $(t,\om)$}; 
\draw[kernel] (middle) to (above); 
\draw[kernel] (below) to (middle); 
\draw[kepsilon] (below) to (left); 
\draw[kepsilon] (below) to (right); 
\draw[kernel] (middle) to (left); 
\draw[kernel] (middle) to (right); 
\end{tikzpicture}
\phantom{1} - \phantom{1}
\begin{tikzpicture}[scale=0.6,baseline=-0.2cm]
\node at (0,0.8) [dot] (middle) {}; 
\node at (0,-0.8) [var] (below) {\tiny $=$}; 
\node at (-0.8,0) [dot] (left) {}; 
\node at (0.8,0) [dot] (right) {}; 
\node at (0,-1.2) () {\tiny $(t,0)$}; 
\node at (0,1.1) () {}; 
\draw[kernel] (below) to (middle); 
\draw[kepsilon] (below) to (left); 
\draw[kepsilon] (below) to (right); 
\draw[kernel] (middle) to (left); 
\draw[kernel] (middle) to (right); 
\end{tikzpicture}
\cdot
\begin{tikzpicture}[scale=0.7,baseline=-0.2cm]
\node at (0,1) [circ] (above) {}; 
\node at (0,-1) [root] (below) {}; 
\node at (0,-1.4) () {\tiny $(t,\om)$}; 
\draw[kernel] (below) to (above) {}; 
\end{tikzpicture}
\bigg) \\
&=: \hat{\<32p>}^{(5)}(t,\om) + \hat{\<32p>}^{(3)}(t,\om) + \hat{\<32p>}^{(1)}(t,\om), 
\end{align*}
where the last term subtracted in the parenthesis above corresponds to the renormalisation $\cc_n'$ in \eqref{eq:processes} (in the limit as $n \rightarrow +\infty$). As we will see, neither term in the parenthesis above makes sense separately: they both represent some divergent object, but their difference converges to a well-defined limit as the Fourier mode cut-off $n$ goes to infinity. In addition, this limit can be characterised explicitly without referring to a limiting procedure, so this justifies the notation $\hat{\<32p>}^{(1)}(t, \om)$. 

We start with $ \hat{\<32p>}^{(5)}$. Proceeding as in \eqref{eq:22_4}-\eqref{qu9}, using Lemma~\ref{l41} and Lemma~\ref{le:circ_convolution}, we immediately have
\begin{align*}
\E\Ll[ |\hat{\<32p>}^{(5)}(t, \om)|^{2}\Rr] \ls \phantom{1}
\begin{tikzpicture}[scale=0.7,baseline=-0.2cm]
\node at (0,0.8) [dot] (above) {}; 
\node at (0,-0.8) [dot] (below) {}; 
\node at (0,0) [dot] (middle) {}; 
\node at (-0.8,0) [dot] (left) {}; 
\node at (0.8,0) [dot] (right) {}; 
\node at (0,1.8) [var] (farabove) {\tiny $=$}; 
\node at (0,-1.8) [var] (farbelow) {\tiny $=$}; 
\node at (1.8,0) [dot] (farright) {}; 
\node at (-1.8,0) [dot] (farleft) {}; 
\node at (0,2.2) [] {\tiny $(t,-\om)$}; 
\node at (0,-2.2) [] {\tiny $(t,\om)$}; 
\node at (0.3,-0.8) () {\tiny $u$}; 
\node at (0.4,0.8) () {\tiny $u'$}; 
\draw[kernel] (above) to (left); 
\draw[kernel] (above) to (right); 
\draw[kernel] (above) to (middle); 
\draw[kernel] (below) to (middle); 
\draw[kernel] (below) to (left); 
\draw[kernel] (below) to (right); 
\draw[kernel] (farabove) to (above); 
\draw[kepsilon] (farabove) to (farright); 
\draw[kepsilon] (farabove) to (farleft); 
\draw[kernel] (farbelow) to (below); 
\draw[kepsilon] (farbelow) to (farright); 
\draw[kepsilon] (farbelow) to (farleft); 
\end{tikzpicture}
\phantom{1} \ls 
\sum_{\substack{\om_1 + \om_2 + \om_3 = \om\\ \om_1 + \om_3 \sim \om_2}} \frac{1}{\scal{\om_1}^2} \frac{1}{\scal{\om_2}^4}\frac{1}{\scal{\om_3}^2} 
\ls \frac{1}{\scal{\om}^2}, 
\end{align*}
where the term $\frac{1}{\scal{\om_2}^{4}}$ comes from the previous bound for the tree $\<30>$. 

We now turn to the component in the third Wiener chaos, whose second moment is bounded by the graph
\begin{align*}
\E \Ll[ |\hat{\<32p>}^{(3)}(t, \om)|^{2} \Rr]\phantom{1} \ls \phantom{1}
\begin{tikzpicture}[scale=1.2,baseline=-0.2cm]
\node at (0,0.5) [dot] (above) {}; 
\node at (0,-0.5) [dot] (below) {}; 
\node at (0,1.5) [var] (farabove) {\tiny $=$}; 
\node at (0,-1.5) [var] (farbelow) {\tiny $=$}; 
\node at (0.5,0) [dot] (right) {}; 
\node at (-0.5,0) [dot] (left) {}; 
\node at (-1.5,0) [dot] (farleft) {}; 
\node at (1,0.5) [dot] (aboveright) {}; 
\node at (1,-0.5) [dot] (belowright) {}; 
\node at (-0.2, 0.5) () {\tiny $u'$}; 
\node at (-0.2,-0.5) () {\tiny $u$}; 
\node at (-0.4, -0.3) () {\tiny $\om_{1}$}; 
\node at (-0.4, 0.3) () {\tiny $-\om_{1}$}; 
\node at (0.6, -0.25) () {\tiny $\om_{2}$}; 
\node at (0.6, 0.25) () {\tiny $-\om_{2}$}; 
\node at (0.5, -0.65) () {\tiny $\om_{3}$}; 
\node at (0.5, 0.65) () {\tiny $\om_{3}'$}; 
\node at (-0.9, -0.9) () {\tiny $\om_{4}$}; 
\node at (-0.9, 0.9) () {\tiny $-\om_{4}$}; 
\node at (0.7, -1.2) () {\tiny $-\om_{3}$}; 
\node at (0.7, 1.2) () {\tiny $-\om_{3}'$}; 
\draw[kernel] (above) to (right); 
\draw[kernel] (below) to (right); 
\draw[kernel] (farabove) to (above); 
\draw[kepsilon] (farabove) to (aboveright); 
\draw[kepsilon] (farabove) to (farleft); 
\draw[kernel] (farbelow) to (below); 
\draw[kepsilon] (farbelow) to (belowright); 
\draw[kepsilon] (farbelow) to (farleft); 
\draw[kernel] (above) to (left); 
\draw[kernel] (above) to (aboveright); 
\draw[kernel] (below) to (left); 
\draw[kernel] (below) to (belowright);  
\end{tikzpicture}
\phantom{1}. 
\end{align*}
The bound for this graph is similar to the one in \eqref{eq:22_2}, and one can proceed essentially in the same way to get
\begin{align*}
\E \Ll[ |\hat{\<32p>}^{(3)}(t, \om)|^{2} \Rr] \ls \scal{\om}^{-2}, 
\end{align*}
which is the desired bound. 

We now turn to the last term
\begin{align*}
	\hat{\<32p>}^{(1)}(t, \om) = 6 \times \bigg( \phantom{1}
	\begin{tikzpicture}[scale=0.7,baseline=0.2cm]
	\node at (0,1.8) [circ] (above) {}; 
	\node at (0,0.8) [dot] (middle) {}; 
	\node at (0,-0.8) [var] (below) {\tiny $=$}; 
	\node at (-0.8,0) [dot] (left) {}; 
	\node at (0.8,0) [dot] (right) {}; 
	\node at (0,-1.2) () {\tiny $(t,\om)$}; 
	\draw[kernel] (middle) to (above); 
	\draw[kernel] (below) to (middle); 
	\draw[kepsilon] (below) to (left); 
	\draw[kepsilon] (below) to (right); 
	\draw[kernel] (middle) to (left); 
	\draw[kernel] (middle) to (right); 
	\end{tikzpicture}
	\phantom{1} - \phantom{1}
	\begin{tikzpicture}[scale=0.7,baseline=-0.2cm]
	\node at (0,0.8) [dot] (middle) {}; 
	\node at (0,-0.8) [var] (below) {\tiny $=$}; 
	\node at (-0.8,0) [dot] (left) {}; 
	\node at (0.8,0) [dot] (right) {}; 
	\node at (0,-1.2) () {\tiny $(t,0)$}; 
	%
	\draw[kernel] (below) to (middle); 
	\draw[kepsilon] (below) to (left); 
	\draw[kepsilon] (below) to (right); 
	\draw[kernel] (middle) to (left); 
	\draw[kernel] (middle) to (right); 
	\end{tikzpicture}
	\
	\cdot
	\
	\begin{tikzpicture}[scale=0.7,baseline=-0.2cm]
	\node at (0,1) [circ] (above) {}; 
	\node at (0,-1) [root] (below) {}; 
	\node at (0,-1.4) () {\tiny $(t,\om)$}; 
	\draw[kernel] (below) to (above) {}; 
	\end{tikzpicture} \bigg). 
\end{align*}
As mentioned before, the two terms in the parenthesis are both ill-defined, but their difference is well-defined as the limit when the regularisation parameter $n$ tends to infinity. To see this, we introduce a notation for the ``lower square'' which both of the expressions have in common, namely
\begin{align*}
	K_{t-u}(\om)& := \phantom{1}\begin{tikzpicture}[scale=0.7,baseline=-0.2cm]
	\node at (0,0.8) [dot] (middle) {}; 
	\node at (0,-0.8) [var] (below) {\tiny $=$}; 
	\node at (-0.8,0) [dot] (left) {}; 
	\node at (0.8,0) [dot] (right) {}; 
	\node at (0,-1.2) () {\tiny $(t,\om)$}; 
	\node at (0,1.1) () {\tiny $u$}; 
	\draw[kernel] (below) to (middle); 
	\draw[kepsilon] (below) to (left); 
	\draw[kepsilon] (below) to (right); 
	\draw[kernel] (middle) to (left); 
	\draw[kernel] (middle) to (right); 
	\end{tikzpicture} 
	\phantom{1}
	= \sum_{\om_{1} \sim \om_{2}} \hP_{t-u}(\om + \om_{1} + \om_{2}) \Big( \int _{-\infty}^u \hP_{t-u_1}(-\om_1)\hP_{u-u_1}(\om_1) \d u_1 \Big)\\
	& \qquad \times \Big( \int _{-\infty}^u \hP_{t-u_2}(-\om_2)\hP_{u-u_1}(\om_2) \d u_2 \Big) \\
	&=  \sum_{\om_{1}\sim  \om_{2}} \frac{e^{- (t-u) ( \scal{\om_{1}}^{2} + \scal{\om_{2}}^{2} + \scal{\om + \om_{1} + \om_{2}}^{2} )}}{4 \scal{\om_{1}}^{2} \scal{\om_{2}}^{2} }.
\end{align*}
Note that the divergent constant $\cc'$ coincides with $\int_{-\infty}^t K_{t-u}(0) \, \d u$.\footnote{This is slightly formal - here $\cc'$  should denote the limit of $\cc_n'$ as $n \to \infty$, which is infinite as discussed before. We are implicitly assuming that a regularisation is present, although we do not capture it in the notation.}
The kernel $K_{t-u}$ is clearly well defined and controlled uniformly over the regularisation for any fixed $t-u > 0$. 
We use this notation to represent
$\hat{\<32p>}^{(1)}(t,\om)$ as 
\begin{equation} \label{eq:linear_decom}
	\begin{split}
		\hat{\<32p>}^{(1)}(t, \om)  &= \int_{-\infty}^{t} K_{t-u}(\om) \big( \hat{\<1>}(u,\om) - \hat{\<1>}(t,\om) \big) \,  \d u,
	\end{split}
\end{equation}
so that
\begin{align*}
\E \Ll[ |\hat{\<32p>}^{(1)}(t, \om)|^{2} \Rr] & = \int_{-\infty}^{t} \int_{-\infty}^{t} K_{t-u}(\om) K_{t-u'}(\om) \\
 & \qquad \qquad \times\E \Ll[\big( \hat{\<1>}(u,\om) - \hat{\<1>}(t,\om) \big)  \big( \hat{\<1>}(u',-\om) - \hat{\<1>}(t,-\om) \big) \Rr] \, \d u \, \d u'. 
\end{align*}
Now, since  by \eqref{e.graph1.diff.times} we have for any $\lambda>0$
\begin{align*}
\E \Ll[|\hat{\<1>}(u,\om) - \hat{\<1>}(t,\om)|^{2}\Rr]^\frac 1 2  \ls (t-u)^{\lambda} \scal{\om}^{-1+ 2 \lambda}, 
\end{align*}
an application of the Cauchy-Schwarz inequality yields
\begin{align*}
\E \Ll[|\hat{\<32p>}^{(1)}(t, \om)|^{2} \Rr]\ls \scal{\om}^{-2 + 4 \lambda} \bigg( \int_{-\infty}^{t} (t-u)^{\lambda} K_{t-u}(\om) \d u \bigg)^{2} \ls \scal{\om}^{-2 },
\end{align*}
where  we have used the fact that for any (small) strictly positive $\lambda$, the integral can be bounded uniformly over the regularisation by $\scal{\om}^{-2\lambda}$. This latter point can be checked as follows: for $\lambda=0$ and without the condition $\om_1 \sim \om_2$, we had already calculated the integral in \eqref{div.log}; the factor $(t-u)^{\lambda}$ makes an extra power  $(\scal{\om_{1}}^{2} + \scal{\om_{2}}^{2} + \scal{\om + \om_{1} + \om_{2}}^{2} )^{-\lambda}$  appear in the sum, which permits to invoke Lemma~\ref{l41} and conclude.  This completes the bound for $\tau = \<32p>$.

\section{Bounds for time differences} \label{sec:time}

We finally  discuss briefly how the reasoning in Section~\ref{sec:bounds} should be modified to establish the bound \eqref{eq:main_bound2}
on the time differences $\E |\htau(t,\om) - \htau(s,\om)|^{2}$. We illustrate the (simple) modification necessary for the graph $\tau = \<31p>$. First, recall 
the Wiener chaos decomposition \eqref{eq:expression_31} for this graph, which yields the following expression for its time-differences:
\begin{equation*} 
\hat{\<31p>}(t,\om) - \hat{\<31p>}(s,\om) = \phantom{1}
\left( \begin{tikzpicture}[scale=0.5,baseline=-0.3cm]
\node at (0,0) [dot] (middle) {}; 
\node at (0,1) [circ] (above) {};
\node at (-0.7,0.6) [circ] (left) {}; 
\node at (0.7,0.6) [circ] (right) {}; 
\node at (0,-1) [var] (below) {\tiny $=$}; 
\node at (0.8,-0.3) [circ] (middleright) {}; 
\node at (0,-1.4) [] {\tiny $(t,\om)$}; 
\draw[kernel] (middle) to (left); 
\draw[kernel] (middle) to (above); 
\draw[kernel] (middle) to (right); 
\draw[kernel] (below) to (middle); 
\draw[kepsilon] (below) to (middleright); 
\end{tikzpicture}
\phantom{1}
- 
\begin{tikzpicture}[scale=0.5,baseline=-0.3cm]
\node at (0,0) [dot] (middle) {}; 
\node at (0,1) [circ] (above) {};
\node at (-0.7,0.6) [circ] (left) {}; 
\node at (0.7,0.6) [circ] (right) {}; 
\node at (0,-1) [var] (below) {\tiny $=$}; 
\node at (0.8,-0.3) [circ] (middleright) {}; 
\node at (0,-1.4) [] {\tiny $(s,\om)$}; 
\draw[kernel] (middle) to (left); 
\draw[kernel] (middle) to (above); 
\draw[kernel] (middle) to (right); 
\draw[kernel] (below) to (middle); 
\draw[kepsilon] (below) to (middleright); 
\end{tikzpicture}
\phantom{1}
\right)
 + \phantom{1} 3 \times
\left( 
\begin{tikzpicture}[scale=0.5,baseline=-0.3cm]
\node at (0,0) [dot] (middle) {}; 
\node at (-0.7,0.8) [circ] (aboveleft) {}; 
\node at (0.7,0.8) [circ] (aboveright) {}; 
\node at (0,-1.2) [var] (below) {\tiny $=$}; 
\node at (1,-0.6) [dot] (right) {}; 
\node at (0,-1.6) [] {\tiny $(t,\om)$}; 
\draw[kernel] (below) to (middle); 
\draw[kepsilon] (below) to (right); 
\draw[kernel] (middle) to (right); 
\draw[kernel] (middle) to (aboveleft); 
\draw[kernel] (middle) to (aboveright); 
\end{tikzpicture}
-
\begin{tikzpicture}[scale=0.5,baseline=-0.3cm]
\node at (0,0) [dot] (middle) {}; 
\node at (-0.7,0.8) [circ] (aboveleft) {}; 
\node at (0.7,0.8) [circ] (aboveright) {}; 
\node at (0,-1.2) [var] (below) {\tiny $=$}; 
\node at (1,-0.6) [dot] (right) {}; 
\node at (0,-1.6) [] {\tiny $(s,\om)$}; 
\draw[kernel] (below) to (middle); 
\draw[kepsilon] (below) to (right); 
\draw[kernel] (middle) to (right); 
\draw[kernel] (middle) to (aboveleft); 
\draw[kernel] (middle) to (aboveright); 
\end{tikzpicture}
\right).
\end{equation*}
The differences of graphs can be bounded separately. We only discuss the first difference here. We can rewrite this difference as
\begin{equation*} 
\begin{tikzpicture}[scale=0.7,baseline=-0.3cm]
\node at (0,0) [dot] (middle) {}; 
\node at (0,1) [circ] (above) {};
\node at (-0.7,0.6) [circ] (left) {}; 
\node at (0.7,0.6) [circ] (right) {}; 
\node at (-1.2,-0.6) [] {\tiny{$P_{t-u}-P_{s-u}$}};
\node at (1.3,-0.6) [] {\tiny{$P_{t-u_1}$}};
\node at (0,-1) [var] (below) {\tiny $=$}; 
\node at (0.8,-0.3) [circ] (middleright) {}; 
%
\draw[kernel] (middle) to (left); 
\draw[kernel] (middle) to (above); 
\draw[kernel] (middle) to (right); 
\draw[kernel] (below) to (middle); 
\draw[kepsilon] (below) to (middleright); 
\end{tikzpicture}
\phantom{1}
+
\begin{tikzpicture}[scale=0.7,baseline=-0.3cm]
\node at (0,0) [dot] (middle) {}; 
\node at (0,1) [circ] (above) {};
\node at (-0.7,0.6) [circ] (left) {}; 
\node at (0.7,0.6) [circ] (right) {}; 
\node at (0,-1) [var] (below) {\tiny $=$}; 
\node at (0.8,-0.3) [circ] (middleright) {}; 
\node at (-.5,-0.6) [] {\tiny{$P_{s-u}$}};
\node at (2,-0.6) [] {\tiny{$P_{t-u_1}- P_{s-u_1}$}};
%
\draw[kernel] (middle) to (left); 
\draw[kernel] (middle) to (above); 
\draw[kernel] (middle) to (right); 
\draw[kernel] (below) to (middle); 
\draw[kepsilon] (below) to (middleright); 
\end{tikzpicture}
\phantom{1},
\end{equation*}
where we have made explicit which kernels are associated to the lower edges in the graph. (A more systematic treatment would suggest the use of new graphical notation for these!) The variance of each of these terms can then be 
bounded, by ``glueing'' two copies of each graph together, as in \eqref{eq:31_4}. We then use the elementary bounds
\begin{align*}
\int_{-\infty}^t |\hP_{t-u}(\om) - \hP_{s-u}(\om)| \, \d u  &\ls \frac{1}{\scal{\om}^2} \big(1 \wedge |t-s| \scal{\om}^2 \big),\\ 
\int_{-\infty}^t  (\hP_{t-u_1}(\om) - \hP_{s-u_1}(\om) )^2 \, \d u_1  &\ls \frac{1}{\scal{\om}^{2}} \big( 1 \wedge |t-s| \scal{\om}^{2} \big).
\end{align*}
By interpolation, the right side can be replaced by $\ls \scal{\om}^{-2+2\lambda} |t-s|^\lambda$, for any $\lambda \in [0,1]$. In other words, an extra factor $|t-s|^{\lambda}$ can be 
obtained by sacrificing a bit of the decay of the integral in $\om$. Then all of the arguments based on convolutions can be performed 
exactly as before, only with a slightly worse factor of $\om$ at one place. We do not go through the details here, but leave it to the interested
reader to check that this does not change the arguments in Section~\ref{sec:bounds}  in any significant way.

\appendix

\section{Alternative proof of Nelson's estimate}

We now give a second proof of Proposition \ref{p.nelson}, based on the following logarithmic Sobolev inequality (see  \cite[Section 1.6]{Bogachev} and \cite[Sections~1.1 and 1.5]{Nualart}).

\begin{lemma} [log-Sobolev inequality]
	\label{le:log-sobolev}
	Let $\mu$ be a Gaussian measure and $X \in W^{1,2}(\mu)$. We have
	\begin{equation} \label{eq:log-sobolev}
	\E (|X|^{2} \log |X|) \leq \E |\D X|^{2} + \frac{1}{2} \E |X|^{2} \log (\E |X|^{2}), 
	\end{equation}
	where $\D$ is the Malliavin derivative, and $\E$ is the expectation taken with respect to $\mu$. 
\end{lemma}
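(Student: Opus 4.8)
The plan is to recast \eqref{eq:log-sobolev} in its standard entropic form and prove it by the classical semigroup (de~Bruijn) argument, first for a finite-dimensional standard Gaussian and then transferring to the abstract setting by density.

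First I would record that, after multiplying \eqref{eq:log-sobolev} by $2$ and setting $\mathrm{Ent}_\mu(g) := \E[g \log g] - \E[g]\log\E[g]$ for nonnegative $g$, the claim is exactly
\begin{equation*}
\mathrm{Ent}_\mu\Ll(|X|^2\Rr) \le 2\,\E\Ll[|\D X|^2\Rr].
\end{equation*}
Writing $f = |X|^2 \ge 0$ and noting that $|\D f|^2/f = 4\,|\D X|^2$, it suffices to prove the inequality $\mathrm{Ent}_\mu(f) \le \tfrac12\,\E\big[\,|\D f|^2/f\,\big]$ for nonnegative $f$. By density of smooth cylinder functions in $W^{1,2}(\mu)$, together with a regularization replacing $f$ by $f+\eps$ followed by a limit, I would reduce to the case where $\mu$ is the standard Gaussian on $\R^N$, the function $f$ is smooth, bounded, and bounded below by a positive constant, and $\D$ is the ordinary gradient $\nabla$.

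In this finite-dimensional smooth setting, I would introduce the Ornstein--Uhlenbeck semigroup $(P_t)_{t \ge 0}$ defined by Mehler's formula
\begin{equation*}
P_t f(x) = \int_{\R^N} f\Ll(e^{-t} x + \sqrt{1 - e^{-2t}}\,y\Rr)\,\d\mu(y),
\end{equation*}
whose generator is $L = \Delta - x\cdot\nabla$, and use three elementary facts: mass conservation $\E[P_t f] = \E[f]$, ergodicity $P_t f \to \E[f]$ as $t \to \infty$, and the integration-by-parts identity $\E[(Lu)\,v] = -\E[\nabla u \cdot \nabla v]$. The core is the de~Bruijn identity: differentiating $t \mapsto \E[P_t f\log P_t f]$ along the flow and using $\E[L P_t f] = 0$ gives $\tfrac{\d}{\d t}\E[P_t f\log P_t f] = -\E\big[\,|\nabla P_t f|^2/P_t f\,\big]$, whence
\begin{equation*}
\mathrm{Ent}_\mu(f) = \int_0^\infty \E\Ll[\frac{|\nabla P_t f|^2}{P_t f}\Rr]\,\d t.
\end{equation*}
I would then invoke the commutation $\nabla P_t f = e^{-t}\,P_t(\nabla f)$, which follows by differentiating Mehler's formula, together with the component-wise Cauchy--Schwarz bound $|P_t(\nabla f)|^2 \le (P_t f)\,P_t(|\nabla f|^2/f)$. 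These give $|\nabla P_t f|^2/P_t f \le e^{-2t}\,P_t(|\nabla f|^2/f)$; integrating over $t$ and using $\E[P_t g] = \E[g]$ produces $\mathrm{Ent}_\mu(f) \le \tfrac12\,\E[\,|\nabla f|^2/f\,]$, which is the desired bound.

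The main obstacle is analytic rather than conceptual. Justifying the differentiation under the expectation and the integration by parts is delicate precisely because $\log P_t f$ and $|\nabla f|^2/f$ are singular where $f$ is small or where $f$ is unbounded; this is exactly why one first works with $f$ smooth and bounded away from $0$ and $\infty$, and removes these restrictions afterwards by monotone/dominated passages to the limit. The remaining point is the transfer from $\R^N$ to the abstract Wiener space, which is handled by approximating $X \in W^{1,2}(\mu)$ by smooth cylinder functions (for which $\D$ coincides with a finite-dimensional gradient) and passing to the limit, using that both sides of the inequality are continuous for the $W^{1,2}(\mu)$ topology.
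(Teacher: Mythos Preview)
The paper does not actually prove this lemma: it is stated with a reference to \cite[Section 1.6]{Bogachev} and \cite[Sections~1.1 and 1.5]{Nualart}, and then used as a black box to derive hypercontractivity (Proposition~\ref{p.hypercontractivity}). So there is no ``paper's own proof'' to compare against.

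Your argument is the standard Bakry--\'Emery / de~Bruijn semigroup proof and it is correct. The steps --- rewriting the inequality as $\mathrm{Ent}_\mu(f) \le \tfrac12\,\E[\,|\nabla f|^2/f\,]$, the entropy-dissipation identity $\tfrac{\d}{\d t}\E[P_t f\log P_t f] = -\E[\,|\nabla P_t f|^2/P_t f\,]$, the commutation $\nabla P_t = e^{-t} P_t \nabla$ from Mehler's formula, and the Cauchy--Schwarz estimate $|P_t(\nabla f)|^2 \le (P_t f)\,P_t(|\nabla f|^2/f)$ --- all check out, as does the integration $\int_0^\infty e^{-2t}\,\d t = \tfrac12$. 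The reductions (cylinder-function approximation, the $f+\eps$ regularization to stay away from zero) are exactly the right way to handle the analytic issues you flag. One small remark: in the substitution $f = |X|^2$, the chain-rule identity $\D f = 2X\,\D X$ and hence $|\D f|^2/f = 4|\D X|^2$ requires $X$ to be real-valued and smooth enough for the chain rule to apply; this is implicitly covered by your cylinder-function reduction, but it is worth saying explicitly.
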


Now, let $T_{t}$ be the Ornstein-Uhlenbeck semigroup defined by
\begin{equation}
T_{t} X = \sum_{n=0}^{+\infty} e^{-nt} X_{n}, 
\end{equation}
where $X_{n}$ is the component of $X$ in $\mcl H_{n}$. The Ornstein-Uhlenbeck semigroup 
is closely related to the Malliavin derivative, because it determines the quadratic form associated with the infinitesimal generator $L$ of $T_t$. More precisely, 
for sufficiently nice random variables $X, Y$, we have
\begin{equation}\label{integrationbypart}
\partial_t \E[(T_t X) Y] = \E[(LX) Y] = - \E[ \langle \D X, \D Y \rangle ].
\end{equation}
See \cite[Section~1.4]{Nualart} for a more detailed discussion of these objects.
The main use of the logarithmic Sobolev inequality will be to show the following hypercontractivity estimate. 

\begin{proposition} [Hypercontractivity]
	\label{p.hypercontractivity}
	Let $T_{t}$ be the Ornstein-Uhlenbeck semigroup. We have
	\begin{equation} \label{eq:hypercontractivity}
	\big( \E |T_{t} X|^{q} \big)^{\frac{1}{q}} \leq \big( \E |X|^{p} \big)^{\frac{1}{p}},
	\end{equation}
	for all $p \geq 2$ and $q = 1 + (p-1) e^{2t}$. 
\end{proposition}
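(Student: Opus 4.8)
The plan is to follow Gross's differentiation argument, using the logarithmic Sobolev inequality of Lemma~\ref{le:log-sobolev} as the infinitesimal input. First I would reduce to the case $X \ge 0$: since $T_t$ is a positivity-preserving Markov semigroup with $T_t 1 = 1$, Jensen's inequality gives $|T_t X| \le T_t |X|$, so it suffices to bound $\E[(T_t |X|)^q]$, that is, to treat nonnegative $X$. To avoid the singularity of the logarithm appearing below where $X$ vanishes, I would work first with $X$ replaced by $X + \eps$ for $\eps > 0$ in a suitable dense class, and let $\eps \to 0$ only at the end.

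Next, with $q(t) := 1 + (p-1)e^{2t}$ (so that $q(0) = p$ and, crucially, $q'(t) = 2(q(t)-1)$) and $u = u_t := T_t X \ge 0$, I would set $N(t) := \E[u_t^{q(t)}]$ and study the monotonicity of $\beta(t) := N(t)^{1/q(t)}$, equivalently of $f(t) := q^{-1}\log N$. Differentiating gives $f' = -\frac{q'}{q^2}\log N + \frac 1 q \frac{N'}{N}$, where $N' = q'\,\E[u^q \log u] + q\,\E[u^{q-1} L u]$ and $L$ is the generator of $T_t$. The integration-by-parts identity \eqref{integrationbypart} turns the last term into $-(q-1)\,\E[u^{q-2}|\D u|^2]$, and using $\D(u^{q/2}) = \tfrac q 2 u^{q/2-1}\D u$ rewrites this as $-\tfrac{4(q-1)}{q}\,\E[|\D(u^{q/2})|^2]$.

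I would then apply Lemma~\ref{le:log-sobolev} to $g := u^{q/2}$, for which $|g|^2 = u^q$ and $|g|^2 \log|g| = \tfrac q 2 u^q \log u$; this yields $\E[|\D(u^{q/2})|^2] \ge \tfrac q 2 \E[u^q \log u] - \tfrac 1 2 N \log N$. Substituting into the formula for $N'$ gives
\begin{equation*}
N' \le \big(q' - 2(q-1)\big)\,\E[u^q \log u] + \tfrac{2(q-1)}{q}\,N \log N,
\end{equation*}
and here the chosen relation $q' = 2(q-1)$ makes the entropy term $\E[u^q \log u]$ cancel exactly, leaving $N' \le \tfrac{2(q-1)}{q}\,N \log N$. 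Feeding this and $q' = 2(q-1)$ back into the expression for $f'$ shows $f'(t) \le 0$ (the sign of $\log N$ being irrelevant, since the two $\log N$ contributions cancel), so $f$ is nonincreasing and $\beta(t) \le \beta(0) = (\E[X^p])^{1/p}$, which is the claim.

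The cancellation driven by the choice of $q(t)$ is the heart of the matter but is mechanical once set up; the genuine obstacle is analytic rather than algebraic. I expect the main work to lie in justifying the differentiation of $N(t)$ under the expectation and the use of \eqref{integrationbypart}, which requires $u_t \in W^{1,2}(\mu)$ with enough integrability uniformly in $t$. I would secure this by first proving the estimate for a dense class of smooth cylindrical functions bounded away from $0$ (where $T_t$, $L$ and $\D$ act transparently on finite chaos expansions), and only afterwards remove the regularisation $\eps$ and extend to general $X \in L^p(\mu)$ by density and Fatou's lemma.
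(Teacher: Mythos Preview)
Your proposal is correct and follows essentially the same approach as the paper: Gross's differentiation argument, applying the logarithmic Sobolev inequality to $(T_t X)^{q(t)/2}$ together with the integration-by-parts formula \eqref{integrationbypart}, and tracking the monotonicity of $t \mapsto \|T_t X\|_{L^{q(t)}}$. Your write-up is in fact slightly more careful than the paper's, since you justify the reduction to $X \ge 0$, introduce the $\eps$-regularisation and density argument explicitly, and isolate the key identity $q'(t) = 2(q(t)-1)$ that drives the cancellation.
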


\begin{proof} [Second proof of Proposition \ref{p.nelson}]
	If $X \in \mcl H_{n}$, then $T_{t} X = e^{-nt} X$, and we can see that Proposition \ref{p.nelson} is an immediate consequence of \eqref{p.hypercontractivity}. It then remains to prove Proposition \ref{p.hypercontractivity}. We can assume $X \geq 0$ without loss of generality. 
	
	Fix $p \geq 2$. Let $q(t) = 1 + (p-1) e^{2t}$, and let
	\begin{align*}
	F(t) = \E |T_{t} X|^{q(t)}, \qquad G(t) = F(t)^{\frac{1}{q(t)}}. 
	\end{align*}
	Our aim is to show that $G'(t) \leq 0$ for all $t > 0$, and \eqref{eq:hypercontractivity} will follow. In fact, we have
	\begin{align*}
	G'(t) = G(t) \bigg[ - \frac{q'(t)}{q^{2}(t)} \log F(t) + \frac{F'(t)}{q(t) F(t)} \bigg]. 
	\end{align*}
	Since $q'(t) \geq 0$, it suffices to show that
	\begin{equation} \label{eq:intermediate}
	-\frac{1}{q(t)} F(t) \log F(t) + \frac{F'(t)}{q'(t)} \leq 0. 
	\end{equation}
	Noting that
	\begin{align*}
	F'(t) = \E \bigg[ (T_{t}X)^{q(t)} \bigg( q'(t) \log(T_{t}X) + q(t) \frac{LT_{t}X}{T_{t}X} \bigg)  \bigg], 
	\end{align*}
	we see that \eqref{eq:intermediate} is equivalent to
	\begin{equation} \label{eq:hyper_before_log}
	-\frac{1}{q(t)} F(t) \log F(t) + \E \bigg[ (T_{t}X)^{q(t)} \log (T_{t}X) \bigg] + \frac{q(t)}{q'(t)} \E \bigg[ (T_{t}X)^{q(t)-1} (LT_{t}X) \bigg]. 
	\end{equation}
	 Applying the log-Sobolev inequality to the random variable $(T_{t}X)^{\frac{q(t)}{2}}$ and using the integration by parts formula \eqref{integrationbypart} which in the current context becomes
	\begin{align*}
	\E \langle \D Y, \D Z \rangle = - \E \big(Y (LZ) \big), 
	\end{align*}
	we see that \eqref{eq:hyper_before_log} follows immediately. 
\end{proof}

\subsection*{Acknowledgments} We are grateful to the very careful referees for their detailed and constructive criticisms, which led to many improvements over the whole paper. We were particularly impressed that one of the referees' reports turned out to be longer than our paper itself! JCM is partially supported by the ANR Grant LSD (ANR-15-CE40-0020-03).
HW is supported by the Royal Society through the University Research Fellowship UF140187. WX is supported by the EPSRC through the fellowship EP/N021568/1.

\bibliographystyle{abbrv}
\bibliography{diagrams}

\end{document}